\renewcommand\subsubsection{\@secnumfont}{\bfseries}%
\renewcommand\subsubsection{\@startsection{subsubsection}{3}
  \z@{.5\linespacing\@plus.7\linespacing}{-.5em}%
  {\normalfont\bfseries}}
\newcommand{\mel}{\MoveEqLeft}
\numberwithin{equation}{section}
\newtheorem{theorem}{Theorem}[section]
\newtheorem{definition}[theorem]{Definition}
\newtheorem{infotheorem}[theorem]{Very informal theorem}
\newtheorem{example*}{Example\textsuperscript{*}}
\newtheorem{proposition*}{Proposition\textsuperscript{*}}
\newtheorem{corollary}[theorem]{Corollary}
\newtheorem{corollary*}{Corollary\textsuperscript{*}}
\newtheorem{proposition}[theorem]{Proposition}
\newtheorem{assumption}[theorem]{Assumption}
\newtheorem{lemma}[theorem]{Lemma}
\newtheorem{convention}[theorem]{Convention}
\theoremstyle{definition}
\newtheorem{example}[theorem]{Example}
\newtheorem{temporaryassumption}[theorem]{Temporary assumption}
\def\star{\textsuperscript{*} }
\def\Limes#1#2 {\lim\limits_{#1\rightarrow #2}}
\def\eps{\epsilon}
\DeclareMathOperator{\dist}{dist}
\def\R{\mathbb{R}}
\def\Q{\mathbb{Q}}
\def\sym{\mathcal{E}}
\def\N{\mathbb{N}}
\def\XXint#1#2#3{{\setbox0=\hbox{$#1{#2#3}{\int}$ }
\vcenter{\hbox{$#2#3$ }}\kern-.59\wd0}}
\DeclareMathOperator{\diam}{Diam}
\DeclareMathOperator{\dom}{Dom}
\DeclareMathOperator{\ran}{R}
\DeclareMathOperator{\supp}{supp}
\DeclareMathOperator{\BD}{BD}
\DeclareMathOperator{\BVA}{BV^\mathbb{A}}
\DeclareMathOperator{\BV}{BV}
\DeclareMathOperator{\curl}{curl}
\def\norm#1{\left\lVert #1 \right\rVert}
\def\scalar#1#2{\langle #1,#2 \rangle}
\def\de{\partial}
\renewcommand{\div}{\operatorname{div}}
\def\dd{\,\mathrm{d}}
\def\dx{\,\mathrm{d}x}
\def\dy{\mathrm{d}y}
\def\dt{\mathrm{d}t}
\newcommand{\mres}{\mathbin{\vrule height 1.6ex depth 0pt width
0.13ex\vrule height 0.13ex depth 0pt width 1.3ex}}
\title{Total $\mathbb{A}$-variation flows}
\begin{document}
\thanks{The original version of this work was written while the author
was funded by the Deutsche Forschungsgemeinschaft (DFG, German Research Foundation) under Germany's Excellence Strategy EXC 2044 --390685587, Mathematics M\"unster: Dynamics--Geometry--Structure. A revision was written while the author received funding from the European Research Council (ERC) under the European Union's Horizon 2020 research and innovation programme through the grant agreement~862342
\copyright 2025 by the author.
}
\date{\today}

\author{David Meyer}
\address{Instituto de Ciencias Matem\'aticas, Consejo Superior de Investigaciones Cient\'\i ficas, 28049 Madrid, Spain}
\email{david.meyer@icmat.es}

\keywords{Total Variation flow, BVA spaces, gradient flows, linear growth. MSC codes: 	35K65, 35K67, 35K92, 26B20}

\maketitle
\begin{abstract}We study the $L^2$-gradient flows, $\de_t u-\div(\mathrm{D}f(x,\mathbb{A}u))=0$, of functionals of the type $\int_{\Omega}f(x,\mathbb{A}u)\dx$, where $f$ is a convex function of linear growth and $\mathbb{A}$ is some first-order linear constant-coefficient differential operator.

To this end, we identify the relaxation of the functional to the space $\BVA\cap L^2$, identify its subdifferential, and show pointwise representation formulas for the relaxation and the subdifferential, both with and without Dirichlet boundary conditions. The existence and uniqueness then follow from abstract semigroup theory.

We further show that our solutions can be obtained as limits of the corresponding flows with $p$-growth as $p\searrow 1$.
\end{abstract}

\section{Introduction}

Let $\Omega\subset\R^n$ be some bounded open domain with Lipschitz boundary. We are concerned with functionals of the type \begin{align}
F(u):=\int_\Omega f(x,\mathbb{A}u(x))\dx.\label{int F}
\end{align}
and the corresponding gradient flows, given, at least formally, by \begin{align}
&\de_t u=\div(A^T\de_y f(x,\mathbb{A}u))\quad\text{ in $\Omega$}\label{main eq}
\end{align} 
with either Neumann or Dirichlet boundary conditions specified further below. 

Here $f(x,y):\overline{\Omega}\times \R^{m\times n}\rightarrow\R$ is assumed to be convex with linear growth in the second variable and to be measurable in the joint variable. $\Omega\subset \R^m$ is assumed to be a bounded open set with Lipschitz boundary. $\mathbb{A}=A\text{D}$ is a constant coefficient linear differential operator of first order, acting on functions $u:\Omega\rightarrow \R^n$ such as for instance the full gradient $\mathbb{A}=\text{D}$, the symmetric gradient $\sym u:=\frac{\text{D}u+(\text{D}u)^T}{2}$ or the divergence $\mathbb{A}=\div$.  

In the light of the possible lack of regularity of $f$ and its linear growth, there are multiple difficulties in showing well-posedness of \eqref{main eq} or even obtaining a reasonable definition of solution of \eqref{main eq}.

The first difficulty is that $f$ might not be differentiable and hence the expression on the left-hand side in \eqref{main eq} is not defined even for smooth $u$. There is a natural workaround for this, which consists of working with the subdifferential of the functional \eqref{int F}. Classical semigroup theory (see e.g.\ \cite[Chapter 3]{MR0348562}) then shows that if the functional $F$ can be interpreted as a convex and lower semicontinuous functional on a Hilbert space (i.e.\ $L^2$), there is a unique solution to the intital value problem $\de_t u(t)+\de F(u(t))\ni 0$ for every initital datum in the Hilbert space, where $\de F$ denotes the subdifferential.

It is of course a priori not clear how to suitably interpret $F$ as a convex and lower-semicontinuous functional on $L^2$ and to what extent its subdifferential agrees with \linebreak$-\div(A^T\de_y f(x,\mathbb{A}u))$. Coming up with a suitable definition of the functional and a characterisation of the subdifferential are the goals of this paper.

The difficulty in doing so is that if one wants to work with a convex and lower-semicontinuous version of \eqref{int F}, one is forced to work with functions for which $\mathbb{A}u$ is merely a bounded Radon measure, as $L^1$ is not closed under weak\star convergence. Furthermore, as a consequence of the exponent $1$, Korn-type inequalities are not true for these types of spaces and they only agree with $\BV$ if $\mathbb{A}$ trivially controls the full gradient (see e.g.\ \cite{ornstein1962non,Kirchheim}). Hence, the natural energy space to work with is the space of functions with bounded $\mathbb{A}$-variation, defined as  \begin{align}
\BVA:=\left\{u\in L^1(\Omega)\,\big|\,\mathbb{A}u\in \mathcal{M}(\Omega,\R^{m\times n})\right\},
\end{align} 
where $\mathcal{M}(\Omega,\R^{m\times n})$ denotes the space of $\R^{m\times n}$-valued Radon measures.

If $\mathbb{A}u$ is not a function, one needs a suitable reinterpretation of $F$. There are multiple classical ways of applying convex functions with linear growth to measures, the first being the Serrin-Goffman type extension (see e.g.\ \cite{Giaquinta}) \begin{align}
F_{SG}(u)=\int_\Omega f(x,(\mathbb{A}u)^a(x))\dx+\int_\Omega f^\infty(x,\frac{(\mathbb{A}u)^s}{|\mathbb{A}u|^s}(x))\dd |\mathbb{A}u|^s(x)\label{gs}
\end{align}
where $(\mathbb{A}u)^s$ and $(\mathbb{A}u)^a$ denote the singular and the absolutely continuous part w.r.t.\ to the Lebesgue measure of the measure $\mathbb{A}u$, and we identify the absolutely continuous part with its density. In full generality, if the dependence of $f$ on $x$ is very rough, such a functional might behave very badly, even in one dimension and for the full gradient, for instance, it might not be lower-semicontinuous (see e.g.\ \cite[p.\ 513]{Bouchitte}, \cite[Section 5]{Valadier}).

Another classical way (see e.g.\ \cite{demengel1984convex}) of defining convex functions of a measure is to use convex duality theory, i.e.\ to set \begin{align}
F_{dual}(u)=\sup_{z}\int_\Omega -f^*(x,z)+z:\mathbb{A}u\dx,\label{dual f}
\end{align}
for $z$ varying over a suitable class for which the product $z:\mathbb{A}u$ makes sense and for which $f^*(x,z)\in L^1$. Such a functional, as a supremum of affine functionals, is trivially convex and lower semicontinuous. If everything behaves nice enough to exchange the supremum and the integral, then for $u$ for which $\mathbb{A}u$ is an $L^1$ function, then this functional agrees with $\int f(x,\mathbb{A}u)\dx$, but in full generality, the constraint $f^*(x,z)\in L^1$ can prevent this for rough $f$ even in one dimension (see e.g.\ \cite{Valadier}).

The third possibility is to simply consider the abstract lower-semicontinuous hull of $F$, i.e. to look at \begin{align}
F_{relaxed}(u)=\liminf_{\substack{u_l\rightarrow u\\ \mathbb{A}u_l\in L^1}}\int_\Omega f(x,\mathbb{A}u_l(x))\dx.
\end{align}
Our first result is roughly the following:

\begin{infotheorem}
The functionals $F_{relaxed}$ and $F_{dual}$ agree with no regularity assumption on $f$ (besides measureability in the joint variable and convexity in the second variable). 
\end{infotheorem}

Rigorous statements can be found in Sections \ref{section 4} and \ref{section 6}. We remark that in the case $\mathbb{A}=\mathrm{D}$ similar statements are known, see e.g.\ \cite[equation (4.19)]{Bouchitte}.

It can be shown almost exactly as in the full gradient case that under some additional continuity assumptions on $f$, both functionals also agree with $F_{SG}$, see Proposition \ref{resh1} and \ref{int rep cont} below.

Our characterization of the subdifferential of $F$ (extended to $L^2$ as $+\infty$ for $u\in L^2\backslash \BVA$) is then roughly the following: \begin{infotheorem} 
$v\in L^2$ lies in $\de F_{dual}(u)$ if and only if it is of the form $-\div z$ for a $z$ for which the supremum in \eqref{dual f} is attained. 

Under additional continuity assumptions on $f$, this also implies that such a $z$ must lie in the pointwise subdifferential $\de_y f(x,\mathbb{A}^au(x))$ a.e.
\end{infotheorem}

Rigorous versions (which also contain boundary conditions and a description of the behaviour of $z$ on the singular part of $\mathbb{A}^su$) can be found in Sections \ref{section 4} and \ref{section 6}.

In particular, well-posedness of \eqref{main eq} then follows directly from abstract semigroup theory.

\begin{corollary}\label{coro intro}
There is a unique solution to \eqref{main eq} for initial data in $L^2(\Omega,\R^n)$, which can be described with the subdifferential characterisation above.
\end{corollary}

The validity of such a theory hinges on being able to use a sufficiently wide class of $z$ in the duality formulation \ref{dual f}. The class of continuous functions is not appropriate here, since there is no reason why $\de_y f(x,\mathbb{A}^au(x))$ should be continuous. It is however natural to consider $z$ which have a (distributional) divergence in $L^2$, which is enough to give a meaning to ``$\int z:\mathbb{A}u$'' by partial integration. We will therefore develop a suitable theory for this pairing in Section \ref{section 3}.\smallskip

Let us also say a few more words about the boundary conditions: There are two main possibilities, the first one being a Neumann-type condition, which reads as $\scalar{z}{\nu}=0$, where $\nu$ is the normal and the scalar product is taken row-wise. This corresponds to $F$ as it is written above.

The case of a Dirichlet boundary condition is more involved. Formally the Dirichlet boundary condition $u\big|_{\de\Omega}=u_1$ corresponds to restricting the functional $F$ to $\mathrm{BV}_{u_1}^{\mathbb{A}}$ (again extended as $+\infty$ to the rest of $L^2$). On the one hand, this requires a theory of traces for the space $\BVA$. It has been shown in \cite{Diening} that the existence of a trace in $L^1$ is equivalent to a certain ellipticity condition for $\mathbb{A}$, the so-called $\mathbb{C}$-ellipticity. On the other hand, the space $\mathrm{BV}_{u_1}^\mathbb{A}$ is not going to be closed under weak\star convergence even if a trace exists, even if $\mathbb{A}=\mathrm{D}$, and there are many classical examples in which the corresponding elliptic problems are impossible to solve, see e.g.\ \cite[Section 2.3]{gorny2024functions}, \cite{finn1965remarks}. The typical workaround for this, which we will also follow, is to consider the lower semicontinuous hull of the functional restricted to $\BV_{u_1}^{\mathbb{A}}$. Following the classical theory, this corresponds, if $f$ fulfills some continuity assumptions, to adding a penalty term $\int_{\de\Omega} f^\infty(x,(u_1-u)\otimes \nu)\dd\mathcal{H}^{n-1}(x)$ to the functional \eqref{gs}. We will see in Theorem \ref{T relax 2} that, if $\mathbb{A}$ is $\mathbb{C}$-elliptic, the relaxed functional can still be described through the duality approach and that there is a similar characterization of the subdifferential in theorem \ref{main thm 2} with the boundary condition $\scalar{z}{\nu}\in \de_y f^\infty(x,(u_1-u)\otimes \nu)$ on $\de\Omega$.

These results also additionally provide a characterisation of minimisers of $F$ (resp.\ the relaxed functional $F_{dual}$) as for those $0$ must trivially be in the subdifferential, which is useful e.g.\ for studying the properties of minimisers, see e.g.\ \cite{beck2015convex,meyer2025attainment,moradifam2018existence}

Another possible way of making sense of the solution to \eqref{main eq} is to consider it as a (at least formal) limit of more regular problems, such as for instance $\de_t-\div(A^T\de_y f^q(x,\mathbb{A}u))=0$ for $q\searrow 1$, which has a much more reasonable concept of solution as one can work with functions for which $\mathbb{A}u\in L^q$ instead of using measures. In Section \ref{section 7}, we will show that convergence of these solutions does indeed hold as $q\searrow 1$ and that one obtains the solutions from Corollary \ref{coro intro} in the limit. Similar results for the full gradient case $\mathbb{A}=\mathrm{D}$ were shown in \cite{tolle2011convergence,MR4030475,Schtzler}.

\subsubsection{Existing literature}
Such problems have a long history and many specific cases have already been studied before, the first, to the best of the authors knowledge, being the parabolic version of the (nonparametric) minimal surface problem, that is $\de_t u-\div\frac{\mathrm{D} u}{\sqrt{1+|\mathrm{D} u|^2}}$=0, considered in \cite{lichnewsky1978pseudosolutions}, and which corresponds to $F=\int \sqrt{1+|\mathrm{D}u|^2}\dx$. 

Another typical model example is the total variation flow $
\de_t u-\div\left(\frac{\text{D}u}{|\text{D}u|} \right)=0$, which is motivated by image restoration models and corresponds to $F=\int_\Omega |\mathrm{D}u|\dx$. A good theory of the concept of solution and the existence and uniqueness was developed in \cite{Nonlinearboundary,andreu2001,bellettini2002total,andreu2001dirichlet,andreu2002some}, see also the references therein. These techniques (which are in a similar spirit to the ones used in this paper) were also generalised to the more general case $F=\int_\Omega f(x,\mathrm{D}u)\dx$ with some continuity assumptions on $f$ in \cite{Caselles,gorny2022duality,Mazon3}. The case of a general rough $f$ has, to the best of the authors' knowledge, only been considered for $1$-homogeneous $f$ in \cite{Moll}.

A different approach based on variational inequalities, which does for instance allow for time-dependent boundary conditions, but gives less ``pointwise'' control of the solutions, has been established in \cite{Bgelein2016,Kinnunen,Schtzler,bogelein2016obstacle,bogelein2015time}.

For $\mathbb{A}\neq \text{D}$ the prototypical case is $\mathbb{A}=\sym$. In this case, the space $\BVA$ is called the space of functions of bounded deformation, abbreviated $\BD$, and was first studied in \cite{suquet1978existence,MR0537013,Temam}. Elliptic problems involving linear growth and the symmetric gradient were first studied in \cite{MR0592713} and come from contexts such as plasticity \cite{suquet1978existence,Temam}, or fluid mechanics in the form of e.g.\ the inviscid Bingham model \cite{bouchut2025h1,bouchut2014convergence} or sea-ice models \cite{brandt2022rigorous,liu2022well,denk2025singular}. 

The flow corresponding to $F(u)=\int_\Omega |\div u|\dx$ has been studied in \cite{briani2011gradient} and the  corresponding function spaces have been investigated mostly in the context of conservation laws in e.g.\ \cite{Chen,chen2003extended}

The study of more general differential operators in the context of linear growth functionals goes back to Van Schaftingen in \cite{vanSchaftingen} and the space $\BVA$ has been further investigated in \cite{Diening, Gmeineder, vanSchaftingen, arroyo2020slicing,MR4340491,MR4029794}.
In the context of image restoration, integrands of the type $f(x,\mathbb{A}u)$ with very mild continuity assumptions and general $\mathbb{A}$ were considered in e.g.\ in \cite{MR4492883,d2025relaxation}. 
 
The regularity of such elliptic problems has been considered e.g.\ in \cite{gmeineder2019sobolev,MR4110431,MR4188326,MR4571629} and is an interesting problem for the future in the parabolic case. 

%The relaxation of functionals with rough $x$-dependence has been done for the case $\mathbb{A}=\text{D}$ in \cite{Bouchitte}. For the case $\mathbb{A}=\sym$ with some extra assumptions it has been considered in \cite{Caroccia}.

 %For $\mathbb{A}=\text{D}$, the question of when the integrands $g$ and $j$ in formulas of the type \eqref{relax f} agree with $f$ and $f^\infty$ has been extensively studied, see e.g.\ \cite{Amar} and the references therein.

%$1$-homogenous functionals for $\text{D}u$ defined by duality have been studied e.g.\ in \cite{AMAR199491}.\medskip

\subsection{Structure of the paper} In Section \ref{section 2}, we will introduce the space $\BVA$ and some of its properties. Furthermore, we will cover the preliminaries from convex analysis. In Section \ref{section 3}, we will introduce a duality pairing for $\mathbb{A}u$ and study Green's-type formulas for $\BVA$. In Section \ref{section 4}, we will introduce the functional $F$ without boundary terms and the precise formulation of the theorem about the subdifferential. In Section \ref{section 6}, we will study the functional with Dirichlet boundary conditions. In Section \ref{section 7}, we will prove that the flow can be approximated by the flow for $f^q$, as discussed above.
The proofs of the subdifferential characterisations and the relaxation are contained in Section \ref{proof section}.

\section{Preliminaries and Notation }\label{section 2}
\subsubsection{General notation}
The divergence of matrix-valued functions is always taken row-wise i.e. \begin{align}
(\div g)_j=\sum_{i}\de_i g_{ji}.
\end{align} 
Let $\Omega\subset \R^n$ be open and bounded with Lipschitz boundary throughout this entire work. Let $\nu$ always denote the outer unit normal on $\de\Omega$. Let $\mathcal{M}(\Omega,\R^m)$ denote the space of bounded (signed) $\R^m$-valued Radon measures on $\Omega$. For a measure $\mu\in \mathcal{M}(\Omega,\R^m)$ we let $\norm{\mu}:=|\mu|(\Omega)$ denote its total variation norm. For such a measure, we let $\mu^a$ and $\mu^s$ be its absolutely continuous and singular parts with respect to the Lebesgue measure. 
We will follow the convention of identifying absolutely continuous (w.r.t.\ the Lebesgue measure) measures with their densities.
Norms of $\R^l$-valued functions are always taken with respect to the Euclidean norm on $\R^l$.

We always let $C$ denote a positive constant that depends on $n,m, \Omega$ and $A$, but not on the other variables unless explicitly specified and is allowed to change values from line to line. Sometimes, when there is such a constant, we also write $a\lesssim b$ instead of $a\leq Cb$.

\subsection{The space $\BVA$}
\begin{definition}
Let $A\in \R^{(m\times n)\times k}$ denote a (fixed) matrix.
We define the space $\BVA(\Omega)$ of functions of bounded $\mathbb{A}$-variation as the space of $u\in L^1(\Omega,\R^m)$ for which the distribution $$\mathbb{A}u:=A\mathrm{D}u$$ is represented by integration against a bounded ($\R^{k}$-valued) Radon measure, which is also denoted by $\mathbb{A}u$.
We equip it with the norm \begin{align}\norm{u}_{\BVA(\Omega)}:=\norm{u}_{L^1(\Omega,\R^m)}+\norm{\mathbb{A}u}.\end{align}
$\norm{\mathbb{A}u}$ is called the total $\mathbb{A}$-variation of $u$. We define the corresponding Sobolev-type spaces as \begin{align}W^{\mathbb{A},p}(\Omega):=\left\{u\in \BVA(\Omega)\mid\mathbb{A}u\in L^p(\Omega,\R^{k})\right\}.\end{align}
\end{definition}

We note that this space does not depend on the matrix $A$, but only its image, and it is customary to make some non-restrictive structural assumptions on $A$.

\begin{convention}\label{bas ass}
$k=m\times n$ and $A$ is symmetric with $A^2=A$.
\end{convention}

This is \textbf{not restrictive} at all: If $k\neq m\times n$ we can either add dummy entries to $A$ if $k<m\times n$ or remove a part of the complement of the image of $A$ if $k>m\times n$. If $k=m\times n$ we can find an invertible $B\in \R^{(m\times n)\times (m\times n)}$ such that $BA$ is an orthogonal projection and replacing $A$ by $BA$ does not change the function space. We remark that for the later study of the functional $\int f(x,\mathbb{A}u)$ or the associated flow, this is not restrictive either, since one can compose $f$ with $B^{-1}$.\smallskip

This covers for instance the classical space $\BV$ for $\mathbb{A}=\text{D}$, the space $\BD$ for $\mathbb{A} u=\sym u:=\frac{\text{D}u+(\text{D}u)^T}{2}$ if $n=m$ or the space $\mathcal{D}\mathcal{M}^1$ of divergence vector fields for $\mathbb{A}=\div$ (where the in the case of the divergence, one adds dummy entries to fit into the convention \ref{bas ass} as explained above).

By duality, the total $\mathbb{A}$-variation can also be expressed as 

\begin{align}
\norm{\mathbb{A}u}=\sup_{\phi\in C_c^\infty(\Omega, \R^{m\times n}), A\phi=\phi, \norm{\phi}_{\sup}\leq 1}\int_\Omega \scalar{u}{\div\phi} \dx
\end{align}
and a function $u\in L^1$ is in $\BVA$ if and only if this is finite. In particular, this implies that $\norm{\mathbb{A}u}$ is lower semicontinuous w.r.t\ distributional convergence.

We always have $\BV\hookrightarrow \BVA$, the reverse embedding holds only in the trivial case when $A$ has full rank \cite{ornstein1962non}, see also the more modern proof in \cite{Kirchheim}.\medskip

We can not expect that smooth functions are dense in the norm topology of $\BVA$, because for any limit of smooth functions $\mathbb{A}u$ would be absolutely continuous, hence we use a weaker topology: 

\begin{definition}
We say $(u_l)_{l\in \N}$ converges strictly to $u$ in $\BVA$ if $u_l\rightarrow u$ in $L^1$ and $\norm{\mathbb{A}u_l}\rightarrow \norm{\mathbb{A}u}$. For $1\leq p<\infty$ we say that $u_l$ converges strictly to $u$ in $\BVA\cap L^p$ if additionally $u_l\rightarrow u$ in $L^p(\Omega,\R^m)$. We say that  $u_l$ converges strictly to $u$ in $\BVA\cap L^\infty$ if additionally $u_l\overset{\ast}{\rightharpoonup} u$ in $L^\infty(\Omega,\R^m)$.
\end{definition}

If $\mathbb{A}=\text{D}$, this is consistent with the usual definition of strict convergence for $\BV$ (see \cite{Ambrosio}).

\begin{proposition}\label{prop:strict}
$C^\infty(\Omega,\R^m)\cap \BVA(\Omega)\cap L^p(\Omega,\R^m)$ is dense in $\BVA(\Omega)\cap L^p(\Omega,\R^m)$ with respect to strict convergence in $\BVA(\Omega)\cap L^p(\Omega,\R^m)$ for $1\leq p\leq \infty$. 
Furthermore, for each $u\in \BVA(\Omega)\cap L^p(\Omega,\R^m)$ one can find a sequence $u_l\in C^\infty(\Omega,\R^m)\cap\BVA(\Omega)\cap L^p(\Omega,\R^m)$ such that  also $\int_{\Omega}\sqrt{1+|\mathbb{A}u_l^a|^2}\dx\rightarrow\int_{\Omega}\sqrt{1+|(\mathbb{A}u)^a|^2}\dx+\norm{\mathbb{A}u^s}$.

Also $C^\infty(\Omega,\R^m)\cap\BVA(\Omega)\cap L^p(\Omega,\R^m)$ is norm-dense in $W^{\mathbb{A},1}(\Omega)\cap L^p(\Omega,\R^m)$ for $p<\infty$.\end{proposition}
The proposition for $p=1$ can be found in \cite{Diening}, while not explicitly stated, the proof there also works for $p>1$ with very minor modifications.\medskip

For a general $\mathbb{A}$, the space $\BVA$ does not have well-behaved traces, e.g.\ when some derivative is missing in $\mathbb{A}$. For ``elliptic'' $\mathbb{A}$, most of the usual theory still works though. \begin{definition}
The operator $\mathbb{A}$ is called $\mathbb{C}$-elliptic if the linear map \begin{align}v\rightarrow\mathbb{A}[\xi]v:= A\begin{pmatrix} \xi_1v_1& \xi_2v_1&\dots& \xi_nv_1\\ \xi_1v_2& \xi_2v_2&\dots& \xi_nv_2\\\dots\\ \xi_1v_m& \xi_2v_m&\dots& \xi_nv_m\end{pmatrix}\end{align} is  injective (on $\mathbb{C}^m$) for all $\xi\in \mathbb{C}^n\backslash \{0\}$.
\end{definition}

For instance, this is the case if $\mathbb{A}=\mathrm{D}$, if $\mathbb{A}=\sym$ or if $n>2$ for $\mathbb{A}=\sym-\frac{1}{n}\div$ as easy calculations show. Examples of operators for which this does not hold are, for instance, $\mathbb{A}=(\div,\curl)$ or $\mathbb{A}=\sym-\frac{1}{2}\div$ both in $n=m=2$.

\begin{theorem}(\cite{Diening})\phantomsection\label{trace}
If $\mathbb{A}$ is $\mathbb{C}$-elliptic, then there is a continuous (w.r.t.\ to the norm topology) and linear trace $u\rightarrow u|_{\de\Omega}$ from $\BVA(\Omega)$ to $L^1(\de\Omega,\R^m)$, which is the restriction to $\de\Omega$ on the subspace $C^1(\overline{\Omega}, \R^m)$.

 It is in fact already continuous with respect to the strict topology. For any $\phi\in C^1(\overline{\Omega},\R^{m\times n})$ we have a Green's formula \begin{align}
\int_\Omega \scalar{u}{\div (A\phi)}\dx+\int_\Omega \phi:\mathbb{A}u\dx=\int_{\de\Omega}\sum_{i,j}\nu_j(A\phi)_{ij}u_i\dd\mathcal{H}^{n-1}.\label{Green1}
\end{align} Furthermore, for any $u\in \BVA(\Omega)$ the extension by zero $\overline{u}$ is in $\BVA(\Omega')$ if $\Omega'\supset\supset \Omega$ and we have $\norm{\overline{u}}_{\BVA(\Omega')}\lesssim\norm{u}_{\BVA(\Omega)}$ where the implicit constant depends on $\Omega$ (and $\mathbb{A}$).\end{theorem}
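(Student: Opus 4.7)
The plan is to follow the classical $\BV$ trace construction, with $\mathbb{C}$-ellipticity supplying the control on $\mathrm{D}u$ that $\mathbb{A}u$ does not carry on its own. I would define the trace on $C^1(\overline{\Omega})$ by restriction, establish a sharp boundary estimate there, and then extend by density: by Proposition~\ref{prop:strict}, $C^\infty$ is norm-dense in $W^{\mathbb{A},1}$ and strictly dense in $\BVA$. Norm continuity on $\BVA$ then follows from strict continuity, since $\|u_l-u\|_{\BVA}\to 0$ forces $\|\mathbb{A}u_l\|\to\|\mathbb{A}u\|$ by the triangle inequality.

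The key estimate for smooth $u$ is the sharp collar inequality
\begin{align}
\|u|_{\partial\Omega}\|_{L^1(\partial\Omega)}\lesssim |\mathbb{A}u|(\Omega_\delta)+\delta^{-1}\|u\|_{L^1(\Omega_\delta)},\qquad \Omega_\delta:=\{x\in\Omega\mid\dist(x,\partial\Omega)<\delta\},
\end{align}
valid for all sufficiently small $\delta>0$. A Lipschitz partition of unity together with bi-Lipschitz boundary flattening reduces this to the analogous statement on a half-cylinder $Q^+:=Q'\times(0,h)$. A cutoff plus fundamental theorem of calculus argument gives it with $\|\partial_n u\|_{L^1}$ in place of $|\mathbb{A}u|$, which is not controlled by $\mathbb{A}u$ in general; here $\mathbb{C}$-ellipticity enters via a Smith-type potential representation (cf.\ \cite{vanSchaftingen,Diening,Gmeineder}), which produces, for each boundary test vector $\varphi\in C_c^\infty(Q',\R^m)$ with $\|\varphi\|_\infty\le 1$, an admissible matrix field $\phi\in C^\infty(\overline{Q^+},\R^{m\times n})$ supported away from the lateral and top faces, with $A\phi=\phi$, $\|\phi\|_\infty\lesssim 1$, $\|\div\phi\|_\infty\lesssim h^{-1}$, and whose trace on $Q'\times\{0\}$ is designed so that contraction with $u(\cdot,0)$ recovers $\int_{Q'}u(x',0)\cdot\varphi(x')\,\mathrm{d}x'$. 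Substituting $\phi$ into the smooth Green's formula and taking the supremum over $\varphi$ yields the collar inequality.

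The remaining steps are routine. Given $u\in\BVA$ and $u_l\in C^\infty$ with $u_l\to u$ strictly, the collar inequality applied to $u_l-u_k$ makes $(u_l|_{\partial\Omega})$ Cauchy in $L^1(\partial\Omega)$: for any $\varepsilon>0$ I pick $\delta>0$ small with $|\mathbb{A}u|(\Omega_\delta)<\varepsilon$ and $|\mathbb{A}u|(\partial\Omega_\delta)=0$, at which point strict convergence yields $|\mathbb{A}u_l|(\Omega_\delta)\to|\mathbb{A}u|(\Omega_\delta)$ while $\delta^{-1}\|u_l-u_k\|_{L^1}\to 0$ for this fixed $\delta$. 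The Green's formula \eqref{Green1} holds on $C^1(\overline{\Omega})$ by classical integration by parts (using $A^T=A=A^2$, so that $\phi:A\mathrm{D}u=A\phi:\mathrm{D}u$), and extends to all of $\BVA$ by strict continuity of both sides (weak$^*$ convergence of $\mathbb{A}u_l$ against continuous $\phi$ on the left, strict continuity of the trace on the right). For the extension $\overline{u}$ to $\Omega'\supset\supset\Omega$, testing $\overline{u}$ against $\phi\in C_c^\infty(\Omega',\R^{m\times n})$ and invoking \eqref{Green1} identifies $\mathbb{A}\overline{u}=\mathbb{A}u+(u\otimes\nu)\mathcal{H}^{n-1}\mres\partial\Omega$ as a measure on $\Omega'$, whose total variation is $\|\mathbb{A}u\|+\|u|_{\partial\Omega}\|_{L^1(\partial\Omega)}\lesssim\|u\|_{\BVA(\Omega)}$ by the trace estimate.

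The main obstacle is the collar estimate for smooth functions: without $\mathbb{C}$-ellipticity it can fail outright (for instance when $\mathbb{A}=\div$), and the construction of the admissible field $\phi$ from the Smith representation is the one genuinely nontrivial ingredient. Everything else is a density/continuity exercise parallel to the $\BV$ case, with Proposition~\ref{prop:strict} supplying the required approximation.
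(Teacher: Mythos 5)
The paper does not prove this theorem itself: it is quoted from the literature, with the proof attributed to \cite[Thm.\ 1.2; Lemma 4.14; Thm.\ 4.20; Cor.\ 4.21]{Diening}. So the only question is whether your blind argument is sound, and it has a genuine gap at exactly the step you flag as ``the one genuinely nontrivial ingredient.'' Your dual test field $\phi$ is, in effect, $\phi(x',x_n)=-\eta(x_n)\,A(w(x')\otimes e_n)$ with $w(x')$ chosen so that $\bigl(A(w\otimes e_n)\bigr)e_n=\varphi(x')$. Two problems. First, $\div\phi$ contains the tangential derivatives $\partial_j\phi_{ij}$ for $j<n$, and the matrix $A(w\otimes e_n)$ generically has nonzero entries in the tangential columns (already for $\mathbb{A}=\sym$), so $\norm{\div\phi}_\infty$ is controlled by $h^{-1}\norm{\varphi}_\infty+\norm{\mathrm{D}_{x'}\varphi}_\infty$, not by $h^{-1}$ alone. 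Taking the supremum over $\varphi\in C_c^\infty(Q')$ with $\norm{\varphi}_\infty\le 1$ therefore does not yield the collar inequality with a uniform constant; mollifying $\varphi$ at scale $h$ only bounds the mollified trace and the resulting estimate cannot be closed to give Cauchyness of $u_l|_{\de\Omega}$ in $L^1$.

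Second, and decisively: solving $\bigl(A(w\otimes e_n)\bigr)e_n=v$ only requires injectivity of the real symbol $\mathbb{A}[e_n]$ on $\R^m$, i.e.\ $\R$-ellipticity in one direction. But the trace operator $\BVA(\Omega)\to L^1(\de\Omega)$ is known to exist \emph{if and only if} $\mathbb{A}$ is $\mathbb{C}$-elliptic, and there are $\R$-elliptic operators that are not $\mathbb{C}$-elliptic (the paper itself mentions $\sym-\tfrac12\div$ for $n=2$). Any proof of the collar estimate that invokes only the real symbol would prove the theorem for those operators as well, so no repair of this construction along the same lines can work. The actual argument in the cited reference uses full $\mathbb{C}$-ellipticity through Smith's theorem (the kernel of $\mathbb{A}$ is a finite-dimensional space of polynomials), Poincar\'e-type inequalities with projections onto that kernel, and a boundary-layer/telescoping argument --- this is where the complex hypothesis enters and it has no counterpart in your sketch. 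A secondary issue: Proposition~\ref{prop:strict} gives strict density of $C^\infty(\Omega)$, whose elements need not extend continuously to $\de\Omega$ nor lie in $W^{1,1}(\Omega)$, so even defining $u_l|_{\de\Omega}$ for your approximants presupposes a version of the trace bound; the $C^\infty(\overline{\Omega})$ density of Lemma~\ref{strict 2} cannot be used since its proof already relies on Theorem~\ref{trace}. The remaining steps (extension of \eqref{Green1} by strict continuity, the identification $\mathbb{A}\overline{u}=\mathbb{A}u+A(u\otimes\nu)\mathcal{H}^{n-1}\mres\de\Omega$ and the resulting bound for the zero extension) are fine once the trace and its estimate are granted.
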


The proofs can be found in \cite[Thm.\ 1.2; Lemma 4.14; Thm.\ 4.20; Cor.\ 4.21]{Diening}. We remark that $\mathbb{C}$-ellipticity is also necessary for the existence of a trace in $L^1$ (cf.\ \cite[Thm.\ 4.18]{Diening}.

We will usually omit the ``$|_{\de\Omega}$''.
If $\mathbb{A}$ is $\mathbb{C}$-elliptic and $u_1\in L^1(\de \Omega,\R^m)$ we set \begin{align}
&\BV_{u_1}^\mathbb{A}(\Omega):=\{u\in \BV_{u_1}^\mathbb{A}\,\big|\,u|_{\de \Omega}=u_1\}\\
&W_{u_1}^{\mathbb{A},p}(\Omega):=W^{\mathbb{A},p}\cap \BV_{u_1}^\mathbb{A}.
\end{align}
It is also known that for $\mathbb{C}$-elliptic $\mathbb{A}$  strict approximation with $C^\infty(\overline{\Omega})$-functions is possible \cite[Lemma 4.15]{Diening}, here we need the slightly stronger statement for $\BVA\cap L^p$:

\begin{lemma}\label{strict 2}
If $\mathbb{A}$ is $\mathbb{C}$-elliptic, then $C^\infty(\overline{\Omega},\R^m)$ is strictly dense in $\BVA(\Omega)\cap L^p(\Omega,\R^m)$ for $1\leq p< \infty$. In $W^{\mathbb{A},1}(\Omega)\cap L^p(\Omega,\R^m)$ it is also norm dense.
\end{lemma}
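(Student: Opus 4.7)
My plan is to revisit the proof of \cite[Lemma 4.15]{Diening} --- which yields $C^\infty(\overline\Omega,\R^m)$-strict density in $\BVA(\Omega)$ under $\mathbb{C}$-ellipticity --- and to verify that the approximating sequence constructed there automatically provides the additional $L^p$-convergence (and, in the $W^{\mathbb{A},1}$-case, strong $L^1$-convergence of $\mathbb{A}u$).

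Concretely, I anticipate that Diening's construction proceeds by first extending $u$ beyond $\Omega$ through a bounded linear operator $E:\BVA(\Omega)\to\BVA(\Omega')$ for some $\Omega'\supset\supset\Omega$, built locally in boundary charts from smooth reflections glued by a partition of unity. The role of $\mathbb{C}$-ellipticity is to ensure, via the well-behaved trace from Theorem~\ref{trace}, that $Eu$ produces \emph{no} singular $\mathbb{A}$-mass on $\partial\Omega$. The smooth approximations are then $u_\varepsilon:=(Eu)*\rho_\varepsilon$ for a standard mollifier $\rho_\varepsilon$ with $\varepsilon<\dist(\overline\Omega,\partial\Omega')$, and Diening already establishes $u_\varepsilon\to u$ in $L^1(\Omega)$ together with $|\mathbb{A}u_\varepsilon|(\Omega)\to |\mathbb{A}u|(\Omega)$.

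To upgrade this to strict convergence in $\BVA\cap L^p$, I would observe that $E$ is assembled from smooth diffeomorphisms and bounded partition-of-unity multipliers, hence is automatically continuous as a map $L^p(\Omega)\to L^p(\Omega')$. For $u\in L^p(\Omega)$ this gives $Eu\in L^p(\Omega')$, and since $p<\infty$, classical mollifier theory yields $u_\varepsilon\to Eu$ in $L^p(\Omega')$, hence $u_\varepsilon\to u$ in $L^p(\Omega)$. Combined with Diening's measure convergence this is strict convergence in $\BVA\cap L^p$.

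For the second claim, if additionally $\mathbb{A}u\in L^1(\Omega,\R^{m\times n})$, then absence of singular $\mathbb{A}$-mass on $\partial\Omega$ gives $\mathbb{A}(Eu)\in L^1(\Omega',\R^{m\times n})$. Since $\mathbb{A}$ is a constant-coefficient differential operator it commutes with convolution, so $\mathbb{A}u_\varepsilon=\mathbb{A}(Eu)*\rho_\varepsilon\to \mathbb{A}(Eu)$ strongly in $L^1(\Omega')$; restriction to $\Omega$, together with the $L^p$-statement above, produces norm convergence in $W^{\mathbb{A},1}(\Omega)\cap L^p(\Omega,\R^m)$. The only point I would expect any scrutiny on is the $L^p$-boundedness of Diening's extension $E$, but because $E$ is manifestly a local construction using only smooth change-of-variables and bump functions, this is automatic; the lemma is therefore essentially a bookkeeping strengthening of \cite[Lemma 4.15]{Diening} rather than an independent result.
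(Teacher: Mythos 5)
Your overall strategy (extend past $\partial\Omega$, control the $\mathbb{A}$-mass created on the boundary, then mollify) is the right family of ideas, but the proof hinges on an object you never construct: a linear extension $E:\BVA(\Omega)\cap L^p\to\BVA(\Omega')\cap L^p$ with $|\mathbb{A}(Eu)|(\partial\Omega)=0$. You only ``anticipate'' that \cite[Lemma 4.15]{Diening} supplies such an operator, and this is the entire content of the lemma, not bookkeeping. A reflection-type extension does not in general preserve the $\BVA$-structure for a general $\mathbb{C}$-elliptic $\mathbb{A}$ (already for $\sym$ a naive reflection fails), and the extensions that the paper actually has available (extension by zero from Theorem~\ref{trace}, or gluing with an exterior Sobolev function via the Green's formula) produce a boundary term $A((u_{\mathrm{ext}}-u)\otimes\nu)\,\mathcal{H}^{n-1}\mres\partial\Omega$ that vanishes only if the exterior trace matches $u|_{\partial\Omega}$ exactly. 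Since $u|_{\partial\Omega}$ is merely in $L^1(\partial\Omega)$ while exterior extensions with the required $L^p$-integrability have traces in a strictly smaller class, exact matching is not available; the paper instead picks $u_l\in W^{1,p}(\R^n\setminus\Omega)$ with $\|u-u_l\|_{L^1(\partial\Omega)}<1/l$, accepts a boundary mass of size $O(1/l)$, enlarges the domain to a Lipschitz neighborhood $\Omega_l$ capturing only an extra $1/l$ of mass, and then applies the \emph{interior} strict approximation of Proposition~\ref{prop:strict} on $\Omega_l$. Your argument, as written, cannot be repaired simply by invoking the cited lemma.

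A second, smaller gap: for the $W^{\mathbb{A},1}$ norm-density claim you assert that ``absence of singular $\mathbb{A}$-mass on $\partial\Omega$ gives $\mathbb{A}(Eu)\in L^1(\Omega')$.'' That requires in addition that $\mathbb{A}(Eu)$ be absolutely continuous on the exterior collar $\Omega'\setminus\overline\Omega$, which is yet another unverified property of your hypothetical $E$. The paper handles this case by mollifying the glued function $\overline u_l$ and splitting the error as
\begin{align}
|\mathbb{A}(u-u_{l,\eps})|(\Omega)\leq \norm{(\chi_\Omega\mathbb{A}(u-u_{l,\eps}))*\eta_\eps}+2\norm{\mathbb{A}\overline{u}_l}_{\Omega+B_\eps(0)\backslash \Omega},
\end{align}
sending $\eps\to0$ first and then $l\to\infty$, so that the (small but nonzero) boundary mass is absorbed in the second term rather than assumed away.
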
 
\begin{proof}
We use that the image of the trace of $W^{1,p}(\R^n\backslash \overline{\Omega},\R^m)$ is dense in $L^1(\de\Omega,\R^m)$ (see \cite[Section 18]{Leoni}).

Let $u_l\in W^{1,p}(\R^n\backslash\Omega,\R^m)$ be such that $\norm{u-u_l}_{L^1(\de\Omega)}< \frac{1}{l}$. Let $\overline{u}_l=\chi_\Omega u+\chi_{\R^n\backslash \Omega}u_l$, then it follows from the Green's formula that  \begin{align}
\mathbb{A}\overline{u}_l=\chi_\Omega \mathbb{A}u+A((u_l-u)\otimes \nu)\mathcal{H}^{n-1}\mres \de\Omega+\chi_{\R^n\backslash \Omega}\mathbb{A}u_l,
\end{align}
which is a (locally) bounded Radon measure.
 We can now take a neighborhood $\Omega_l$ with Lipschitz boundary of $\overline{\Omega}$ such that \begin{align}
|\mathbb{A}\overline{u}_l|(\Omega_l)\leq |\mathbb{A}\overline{u}_l|(\overline{\Omega})+\frac{1}{l}\leq |\mathbb{A}u|(\Omega)+\frac{2}{l},
\end{align}
where we have also used that $A$ has operator norm $\leq 1$ by the Convention \ref{bas ass}.
Then by Proposition \ref{prop:strict} we can strictly approximate $\overline{u}_l|_{\Omega_l}$ in $\BVA(\Omega_l)\cap L^p(\Omega_l,\R^m)$, in particular, we can find a $u_l'\in C^\infty(\Omega_l,\R^m)$ such that $|\mathbb{A}u_l'|(\Omega_l)\leq |\mathbb{A}u|(\Omega)+\frac{3}{l}$ and such that 
$\norm{u-u_l'}_{L^p(\Omega,\R^m)}\leq \frac{1}{l}$. Hence the sequence $u_l'|_\Omega$ converges strictly to $u$ in $\BVA(\Omega)\cap L^p(\Omega,\R^m)$ and lies in $C^\infty(\overline{\Omega},\R^m)$.\smallskip

To show the norm density in $W^{\mathbb{A},1}$, we take a sequence of standard mollifiers $\eta_\eps$, supported in $B_\eps(0)$ and consider the functions $u_{l,\eps}=\overline{u}_l*\eta_\eps$. For every fixed $l$, they converge to $u$ in $L^p(\Omega,\R^m)$. We can estimate \begin{align}
|\mathbb{A}(u-u_{l,\eps})|(\Omega)\leq \norm{(\chi_\Omega\mathbb{A}(u-u_{l,\eps}))*\eta_\eps}+2\norm{\mathbb{A}\overline{u}_l}_{(\Omega+B_\eps(0))\backslash \Omega}.
\end{align}

This goes to zero if we first let $\eps\rightarrow 0$ and then $l\rightarrow \infty$.\end{proof}

\subsection{Preliminaries from convex analysis}
We denote the range and domain of functions with $R$ and $\dom$, with the convention that for functions $g:\mathcal{X}\rightarrow\R\cup\{+\infty\}$ (where $\mathcal{X}$ is an arbitrary set) the value $+\infty$ (and its preimage) are excluded, i.e.\ we set $\dom(g)=\{x\in \mathcal{X}\,|\, g(x)\neq +\infty\}$ and $R(g)=g(\dom(g))$.

Let $H$ be some real separable Hilbert space and $\mathcal{G}:H\rightarrow \R\cup\{+\infty\}$ some convex and lower semicontinuous functional, then for $u,v\in H$ we define \begin{align}
v\in\de\mathcal{G}(u)\iff \scalar{v}{w-u}\leq \mathcal{G}(w)-\mathcal{G}(u)\quad\forall w\in H.
\end{align}
This is called the subdifferential of $\mathcal{G}$. If $H$ is finite-dimensional and $\mathcal{G}<\infty$ everywhere, then for every $u$ the set $\de\mathcal{G}(u)$ is non-empty. If $\mathcal{G}$ is Gateux-differentiable at $u$, then $\de\mathcal{G}(u)=\{\mathrm{D}\mathcal{G}(u)\}$. It will also be convenient to consider $\de\mathcal{G}$ as a relation on $H^2$, i.e.\ by a slight abuse of notation we set \begin{align}
\de\mathcal{G}=\{(u,v)\in H^2\,\big|\,v\in \de\mathcal{G}(u)\}.\label{deg rel}
\end{align}
The subdifferential is monotone, i.e. for $v\in \de\mathcal{G}(u)$ and $v'\in \de\mathcal{G}(u')$ it holds that \begin{align}
\scalar{u-u'}{v-v'}\geq 0.
\end{align}
We will define the range and the domain of the subdifferential as $\dom(\de\mathcal{G})=\{u\in H\,|\, \de\mathcal{G}(u)\neq \emptyset\}$ and $R(\de\mathcal{G})=\bigcup_{u\in\dom(\de\mathcal{G})}\de\mathcal{G}(u)$.

If we take the subdifferential with respect to $1$ variable only, we highlight this by adding the variable in a subscript as in e.g.\ $\de_u\mathcal{G}$. For the sake of better distinction, derivatives will instead be denoted by a $\mathrm{D}$.

\begin{lemma}\phantomsection\label{monotonicity}
If there is at least one $u\in H$ with $\mathcal{G}(u)<\infty$, then for every $\lambda >0$ and $w\in H$ there is exactly one pair $u,v\in H$ with $u+\lambda v=w$ and $v\in \de\mathcal{G}(u)$ (i.e.\ $\de\mathcal{G}$ is a maximal monotone operator).
 In particular, any subset of $\de\mathcal{G}$ which has this property for some $\lambda>0$ must already be the full set $\de\mathcal{G}$.

Furthermore if $w-u\in \de\mathcal{G}(u)$ and $w'-u'\in \de\mathcal{G}(u')$ then $\norm{w-w'}_H\geq \norm{u-u'}_H$.
\end{lemma}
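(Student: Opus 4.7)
The plan is to reduce everything to the standard Moreau--Yosida/resolvent construction. For the existence statement in part~1, I would fix $\lambda>0$ and $w\in H$ and consider the auxiliary functional
\begin{align}
G(u):=\mathcal{F}(u)+\frac{1}{2\lambda}\|u-w\|_H^2.
\end{align}
Since $\mathcal{F}$ is proper, convex and lower semicontinuous on a Hilbert space, it is bounded below by some continuous affine functional $\ell$ (this is the standard consequence of the Hahn--Banach separation applied to the epigraph). Hence $G$ is bounded below, strictly convex (because of the quadratic term), coercive, and lower semicontinuous. The direct method then yields a unique minimizer $u^\ast\in H$. Writing out the minimality condition, for every $h\in H$ and $t\in(0,1]$,
\begin{align}
\mathcal{F}(u^\ast+t(h-u^\ast))+\frac{1}{2\lambda}\|u^\ast+t(h-u^\ast)-w\|_H^2\geq G(u^\ast),
\end{align}
dividing by $t$, using convexity of $\mathcal{F}$ and sending $t\searrow0$ gives $\tfrac{w-u^\ast}{\lambda}\in\partial\mathcal{F}(u^\ast)$; setting $v:=\tfrac{w-u^\ast}{\lambda}$ produces the required pair.

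For uniqueness, suppose $(u_1,v_1)$ and $(u_2,v_2)$ both satisfy the conditions. Then $u_1-u_2=-\lambda(v_1-v_2)$, so monotonicity of $\partial\mathcal{F}$ (which is immediate from the defining inequality) gives
\begin{align}
0\leq\scalar{u_1-u_2}{v_1-v_2}=-\lambda\|v_1-v_2\|_H^2,
\end{align}
forcing $v_1=v_2$ and then $u_1=u_2$. The ``in particular'' clause is now an easy consequence: if $S\subset\{(u,v)\,|\,v\in\partial\mathcal{F}(u)\}$ already has the existence property, then for any $(u_0,v_0)\in\partial\mathcal{F}$ we set $w:=u_0+\lambda v_0$, pick a pair $(u,v)\in S$ with $u+\lambda v=w$, and invoke the uniqueness just shown to conclude $(u,v)=(u_0,v_0)\in S$.

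For the last assertion, the identities $w-u\in\partial\mathcal{F}(u)$ and $w'-u'\in\partial\mathcal{F}(u')$ combined with monotonicity yield
\begin{align}
\scalar{u-u'}{(w-u)-(w'-u')}\geq 0,
\end{align}
which rearranges to $\scalar{u-u'}{w-w'}\geq\|u-u'\|_H^2$, and Cauchy--Schwarz finishes it. The main substantive step is the existence part of~1; the hard point there is not the minimization itself but recognizing that one needs the affine lower bound to get coercivity and to rule out $G\equiv-\infty$. Everything else is bookkeeping with the defining inequality of the subdifferential.
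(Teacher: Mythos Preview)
Your proof is correct and is the standard Moreau--Yosida/proximal mapping argument. The paper does not actually prove this lemma; it is stated as a preliminary from convex analysis and the reader is referred to \cite{Bauschke} for the proofs. So there is nothing to compare against---you have simply supplied the textbook argument that the paper chose to cite rather than reproduce.
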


\begin{lemma}\phantomsection\label{subdiff approx}For every $u\in H$  with $\mathcal{G}(u)<\infty$ there is a sequence of $u_l\rightarrow u$ such that $\de\mathcal{G}(u_l)$ is not empty and $\mathcal{G}(u_l)\rightarrow \mathcal{G}(u)$.
\end{lemma}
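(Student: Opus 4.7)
The plan is to use the Moreau--Yosida/resolvent construction supplied by Lemma \ref{monotonicity}. For each $l\in\N$, applying that lemma with $\lambda=1/l$ and $w=u$ produces a unique pair $(u_l,v_l)\in H\times H$ satisfying
\begin{align}
u_l+\tfrac{1}{l}v_l=u,\qquad v_l\in \de\mathcal{F}(u_l).
\end{align}
This immediately settles the non-emptiness of $\de\mathcal{F}(u_l)$, so the work is to verify that $u_l\to u$ strongly and $\mathcal{F}(u_l)\to \mathcal{F}(u)$.

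The key identity is that $v_l=l(u-u_l)$. Testing the subgradient inequality $v_l\in\de\mathcal{F}(u_l)$ against $u$ yields
\begin{align}
l\norm{u-u_l}_H^2=\scalar{v_l}{u-u_l}\leq \mathcal{F}(u)-\mathcal{F}(u_l),
\end{align}
which gives both the a priori bound $\mathcal{F}(u_l)\leq \mathcal{F}(u)$ and the inequality $\norm{u-u_l}_H^2\leq \frac{1}{l}(\mathcal{F}(u)-\mathcal{F}(u_l))$. To pass from here to $u_l\to u$, I would invoke the standard fact that a proper convex lsc functional on $H$ is bounded below by a continuous affine map: there exist $a\in H$ and $b\in \R$ with $\mathcal{F}(v)\geq \scalar{a}{v}+b$ for every $v$. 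Substituting this and using Cauchy--Schwarz together with $\norm{u_l}_H\leq \norm{u}_H+\norm{u_l-u}_H$ gives
\begin{align}
\norm{u-u_l}_H^2\leq \tfrac{1}{l}\bigl(\mathcal{F}(u)-b+\norm{a}_H\norm{u}_H+\norm{a}_H\norm{u-u_l}_H\bigr),
\end{align}
from which $\norm{u-u_l}_H=O(l^{-1/2})$ follows for large $l$, hence $u_l\to u$.

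Finally, with $u_l\to u$ already in hand and $\mathcal{F}(u_l)\leq \mathcal{F}(u)$, the lower semicontinuity of $\mathcal{F}$ gives
\begin{align}
\mathcal{F}(u)\leq \liminf_{l\to\infty}\mathcal{F}(u_l)\leq \limsup_{l\to\infty}\mathcal{F}(u_l)\leq \mathcal{F}(u),
\end{align}
so $\mathcal{F}(u_l)\to \mathcal{F}(u)$ as desired. The only non-cosmetic step is the strong convergence $u_l\to u$; this is the place where one must use that $\mathcal{F}$ is bounded below by an affine functional, and it is essentially the only obstacle beyond bookkeeping.
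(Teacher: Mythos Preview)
Your argument is correct and is precisely the standard resolvent (Moreau--Yosida) proof of this density result. The paper itself does not supply a proof of this lemma: it is listed among the convex-analysis preliminaries in Section~\ref{section 2} and the reader is referred to \cite{Bauschke} for the proofs, so there is nothing to compare against beyond noting that your approach is the classical one found in that reference.
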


Let $g:\R^d\rightarrow \R$ be convex, then its Legendre transform is defined as \begin{align}
g^*(y)=\sup_{x}\scalar{x}{y}-g(x).
\end{align}
This function is also convex and lower semicontinuous. The supremum is attained if and only if $y\in \de g(x)$.
 Furthermore it holds that $\overline{\ran(\de g)}=\overline{\dom(g^*)}$.

By Fenchel duality it holds that \begin{align}\label{Fenchel}
g(x)=\sup_y \scalar{x}{y}-g^*(y)
\end{align}
If $g$ is $1$-homogeneous (as it is e.g.\ the case for the recession function), then $g^*$ is $0$ on $\overline{\ran(\de g)}$ and $+\infty$ everywhere else.

If $g:\overline{\Omega}\times \R^l\rightarrow \R$ is convex and such that $\frac{g(x)}{1+|x|}$ is bounded, then the recession function $g^\infty$ is defined as \begin{align}
g^\infty(x,y)=\lim_{x'\rightarrow x,t\rightarrow +\infty}\frac{1}{t}g(x',ty),\label{def rec}
\end{align}
presuming the limit exists.

The main existence result for abstract evolution equations, which we are going to use, is the following classical theorem due to Brezis and Komura.

\begin{proposition}[{\cite[Thm.\ 3.2]{MR0348562}}]\label{brezis}
Let $\mathcal{G}$ be as above. Assume that the set of $u\in H$ for which $\mathcal{G}(u)\neq +\infty$ is dense in $H$. Then for every $u_0\in H$ and every $T>0$ there is a unique solution $u\in C([0,T],H)\cap W_{loc}^{1,\infty}((0,T),H)$ to the following problem \begin{align}
&u(0)=u_0\\
&-\de_t u\in \de \mathcal{G}(u(t))\quad \mathrm{ a.e.}
\end{align}
\end{proposition}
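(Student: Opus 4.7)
This is the classical Brezis generation theorem for subdifferentials of convex lower semicontinuous functionals, so strictly speaking one invokes \cite{MR0348562}. For a self-contained sketch, the natural route is via the Moreau–Yosida regularization, where Lemma \ref{monotonicity} provides exactly the maximal-monotonicity package that the general theory requires.

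First I would define, for each $\lambda>0$, the resolvent $J_\lambda\colon H\to H$ by $J_\lambda(w)=u$, where $(u,v)$ is the unique pair furnished by Lemma \ref{monotonicity} with $u+\lambda v=w$ and $v\in\partial\mathcal{F}(u)$. Setting $A_\lambda(w):=\tfrac{1}{\lambda}(w-J_\lambda(w))$ one has $A_\lambda(w)\in\partial\mathcal{F}(J_\lambda(w))$, and from the contractivity statement in Lemma \ref{monotonicity} one deduces that $J_\lambda$ is $1$-Lipschitz and hence $A_\lambda$ is $\tfrac{2}{\lambda}$-Lipschitz. For each $\lambda$ the Lipschitz Cauchy problem $\partial_t u_\lambda=-A_\lambda(u_\lambda)$, $u_\lambda(0)=u_0$ is then uniquely solvable by Picard–Lindelöf.

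Next, the key a priori estimate: differentiating $t\mapsto\tfrac12\|A_\lambda u_\lambda(t)\|^2$ and using the monotonicity of $A_\lambda$ shows this quantity is non-increasing, so $\|A_\lambda u_\lambda(t)\|\leq\|A_\lambda u_0\|$. When $\partial\mathcal{F}(u_0)\neq\emptyset$ the latter is bounded by any selection $\|v\|$, $v\in\partial\mathcal{F}(u_0)$, uniformly in $\lambda$. Then, computing $\tfrac{d}{dt}\|u_\lambda-u_\mu\|^2$ and applying monotonicity of $\partial\mathcal{F}$ to the pairs $(J_\lambda u_\lambda,A_\lambda u_\lambda)$, $(J_\mu u_\mu,A_\mu u_\mu)$, together with the identity $J_\lambda w=w-\lambda A_\lambda w$, yields a bound of the form $\|u_\lambda-u_\mu\|_{C([0,T],H)}\lesssim(\lambda+\mu)^{1/2}$. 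So $u_\lambda\to u$ uniformly on $[0,T]$, with $u\in W^{1,\infty}(0,T;H)$ and $-\partial_t u(t)\in\partial\mathcal{F}(u(t))$ a.e., by closedness of the subdifferential graph combined with lower semicontinuity of $\mathcal{F}$. Uniqueness is immediate: for two solutions $u,\tilde u$, monotonicity gives $\tfrac{d}{dt}\|u-\tilde u\|^2\leq 0$, and $u(0)=\tilde u(0)$ forces equality.

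The main obstacle is the case of a general initial datum $u_0\in H$ with $\mathcal{F}(u_0)=+\infty$, which is exactly where the density hypothesis enters. I would approximate $u_0$ by $u_0^n\in\dom\mathcal{F}$ and invoke Lemma \ref{subdiff approx} to further approximate by data in $\dom(\partial\mathcal{F})$; the contractivity $\|u^n(t)-u^m(t)\|\leq\|u_0^n-u_0^m\|$ (again Lemma \ref{monotonicity}) makes the associated solutions Cauchy in $C([0,T],H)$. The limiting object is only shown to satisfy the inclusion on $(0,T)$ and to lie in $W^{1,\infty}_{loc}((0,T),H)$ rather than up to $t=0$, which accounts for the "$\mathrm{loc}$" in the statement.
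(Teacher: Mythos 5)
The paper does not prove this proposition at all---it simply cites \cite[Thm.\ 3.2]{MR0348562}---and your proposal does the same, with the supplementary Yosida-approximation sketch being precisely the standard argument from that reference. The only step your sketch glosses over is the last one: for $u_0\notin\dom(\de\mathcal{F})$, uniform convergence of the approximating solutions does not by itself yield the $W_{loc}^{1,\infty}$ regularity or the a.e.\ inclusion on $(0,T)$; one needs the smoothing estimate $\norm{\de_t u(t)}\leq \norm{v}+\tfrac{1}{t}\norm{u_0-w}$ for $w\in\dom(\de\mathcal{F})$ and $v\in\de\mathcal{F}(w)$, whose proof uses the convexity of $\mathcal{F}$ in an essential way and not merely the maximal monotonicity of $\de\mathcal{F}$.
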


We refer the reader e.g.\ to \cite{Bauschke} for the other proofs and further reading.

\section{Duality pairings for $\BVA$ and Green's formula}\label{section 3}
In general, the product of $\mathbb{A}u$ and an $L^\infty$-function $z$ is not defined, as $z$ is not well-defined on the singular parts of $\mathbb{A}u$, however, if $z$ has a divergence, we will be able to make sense of this by partial integration. In the full gradient case, this construction is due to Anzelotti \cite{Anzellotti1983PairingsBM} and for $\mathbb{A}=\sym$ it was introduced in \cite{KohnTemam}. 

The main idea is that for $\phi\in C_c^\infty(\Omega)$ and sufficiently smooth $u$ and $z$, one has (using the assumed symmetry of $A$ from the Convention \ref{bas ass}) \begin{align}
\int_{\Omega} \phi z:\mathbb{A}u\dx=\int_{\Omega} \phi Az:\mathrm{D}u\dx=-\int_\Omega\sum_{i,j}\de_j\phi (Az)_{ij}u_i+\phi\de_j(Az)_{ij}u_i\dx.
\end{align}
This suggests that the natural space for partial integration is the following:

\begin{definition}
For $1\leq p\leq \infty$ we define \begin{align}X_p(\Omega)=\left\{z\in L^\infty(\Omega,\R^{m\times n})\mid Az=z ,\, \div z\in L^p(\Omega,\R^m)\right\}\end{align} with the norm $\norm{z}_{L^\infty(\Omega,\R^{m\times n})}+\norm{\div z}_{L^p(\Omega,\R^m)}$. Here, the divergence is taken as the distributional one.
\end{definition}

Because we assumed that $A$ is idempotent (cf.\ Convention \ref{bas ass}) and anything orthogonal to the image of $A$ does not affect the product $z:\mathbb{A}u$, the assumption $Az=z$ is natural and not restrictive.

\begin{definition}
Let $u\in \BVA(\Omega)\cap L^p(\Omega,\R^m)$ and let $z\in X_{p'}(\Omega)$ with $\frac{1}{p}+\frac{1}{p'}=1$. Then we define a distribution $(z,\mathbb{A}u)$ in $\Omega$ as \begin{align}
\scalar{(z,\mathbb{A}u)}{\phi}:=-\int_{\Omega}\sum_{i,j}\phi\de_jz_{ij}u_i+\de_j\phi z_{ij}u_i\dx.
\end{align}
\end{definition}

This is well-defined by the integrability assumptions. For $\mathbb{A}=\mathrm{D}$, this is called the Anzelotti pairing, and we will also use that name for our pairing.

We have the following trivial but useful examples:

\begin{example}\phantomsection\label{pairingtrivial}
If $z$ is continuous, then $(z,\mathbb{A} u)$ is just the scalar product of $z$ and the measure $\mathbb{A} u$, since the weak\star\ limit of  $z:\mathbb{A} u_l$ is $z:\mathbb{A} u$ as $\mathbb{A} u_l\overset{*}{\rightharpoonup} \mathbb{A} u$.

If $u\in W^{\mathbb{A},1}$, then we can take a smooth approximating sequence in the norm topology by Proposition \ref{prop:strict} and obtain by partial integration that $(z,\mathbb{A} u)=z:\mathbb{A} u$.

If $\phi\in C^1(\overline{\Omega})$, then we have $\phi(z,\mathbb{A}u)=(\phi z,\mathbb{A}u)$ by definition.
\end{example}

\begin{proposition}\label{anz pairing}
Let $z$ and $u$ be as above. Then the distribution $(z,\mathbb{A} u)$ is a bounded Radon measure in $\Omega$.

For every Borel measurable $U\subset \Omega$ we have the bound \begin{align}|(z,\mathbb{A} u)|(U)\leq \norm{z}_{L^\infty(\Omega,\R^{m\times n})}|\mathbb{A} u|(U).\end{align}\end{proposition}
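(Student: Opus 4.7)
The plan is to proceed in three steps: first establish that the distribution $(z,\mathbb{A}u)$ is bounded on $C_c^\infty(\Omega)$ with the naive constant $\|z\|_\infty \|\mathbb{A}u\|$ (so that Riesz representation gives a bounded Radon measure), then sharpen to open sets via strict approximation together with weak\textsuperscript{*}-convergence of the total variation measures, and finally extend to Borel sets by outer regularity.

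For Step 1, I would use Prop.~\ref{prop:strict} to pick smooth $u_l \in C^\infty(\Omega,\R^m)$ with $u_l \to u$ strictly in $\BVA \cap L^p$. For smooth $u_l$ the defining integration-by-parts identity turns $(z,\mathbb{A}u_l)$ into the absolutely continuous measure $z:\mathbb{A}u_l\,dx$, so for $\phi \in C_c^\infty(\Omega)$ one has
\begin{align}
|\langle (z,\mathbb{A}u_l),\phi\rangle| = \Bigl|\int_\Omega \phi\, z{:}\mathbb{A}u_l \dx\Bigr| \leq \|z\|_{L^\infty}\|\phi\|_\infty \|\mathbb{A}u_l\|_{L^1}.
\end{align}
On the other hand, the distributional definition passes to the limit: each of the two terms $\int \phi\,(\div z)\cdot u_l\dx$ and $\int \nabla\phi\cdot z\cdot u_l\dx$ converges because $u_l\to u$ in $L^p$, $\div z\in L^{p'}$, $z\in L^\infty$, and $\nabla\phi\in L^\infty_c$. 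Combined with $\|\mathbb{A}u_l\|_{L^1}\to\|\mathbb{A}u\|$, this yields $|\langle (z,\mathbb{A}u),\phi\rangle|\leq \|z\|_\infty \|\phi\|_\infty\|\mathbb{A}u\|$. Density of $C_c^\infty$ in $C_0(\Omega)$ and Riesz representation then give a bounded signed Radon measure, justifying the notation $(z,\mathbb{A}u)$.

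For Step 2, let $U\subset\Omega$ be open and take $\phi\in C_c^\infty(U)$ with $|\phi|\leq 1$. Repeating the inequality above with this $\phi$ for the smooth approximants gives
\begin{align}
|\langle (z,\mathbb{A}u_l),\phi\rangle| \leq \|z\|_{L^\infty}\int_\Omega |\phi|\,|\mathbb{A}u_l|\dx.
\end{align}
The left-hand side converges to $|\langle (z,\mathbb{A}u),\phi\rangle|$ as before. The key ingredient for the right-hand side is that strict convergence $u_l\to u$ in $\BVA$ implies the narrow weak\textsuperscript{*} convergence $|\mathbb{A}u_l|\,\mathcal{L}^n \rightharpoonup |\mathbb{A}u|$ of total variation measures (this is the classical Reshetnyak-type consequence of strict convergence, valid in the present generality because the total variation is the $L^1$ norm of $\mathbb{A}u_l$ for smooth $u_l$ and is lower semicontinuous on the limit). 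Since $|\phi|$ is continuous and compactly supported in $\Omega$, we get $\int |\phi|\,|\mathbb{A}u_l|\dx\to\int_\Omega|\phi|\,d|\mathbb{A}u|\leq |\mathbb{A}u|(U)$. Taking the supremum over such $\phi$ and invoking the definition of the total variation of a Radon measure through duality with $C_c(U)$ yields $|(z,\mathbb{A}u)|(U)\leq \|z\|_{L^\infty}|\mathbb{A}u|(U)$.

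For Step 3, outer regularity of the Radon measures $|(z,\mathbb{A}u)|$ and $|\mathbb{A}u|$ allows, for any Borel $U$ and $\varepsilon>0$, to choose an open $V\supset U$ with $|\mathbb{A}u|(V)\leq |\mathbb{A}u|(U)+\varepsilon$. Step 2 applied to $V$ gives $|(z,\mathbb{A}u)|(U)\leq |(z,\mathbb{A}u)|(V)\leq \|z\|_{L^\infty}(|\mathbb{A}u|(U)+\varepsilon)$, and letting $\varepsilon\to 0$ finishes the proof. The main technical point I expect to have to be careful about is the Reshetnyak-type weak\textsuperscript{*} convergence of the total variations along a strict approximation; once that is in place, the rest is a standard cutoff and outer-regularity argument.
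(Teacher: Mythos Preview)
Your proof is correct and, in Step~1, essentially identical to the paper's: strict approximation by smooth $u_l$, the pointwise identity $(z,\mathbb{A}u_l)=z{:}\mathbb{A}u_l\,\mathcal{L}^n$, and passage to the limit in the distributional definition.

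The difference lies in how the localized bound is obtained. The paper does not invoke weak\textsuperscript{*} convergence of the total variation measures. Instead it observes that for any open $U\subset\Omega$ with Lipschitz boundary one can simply restrict $u$ and $z$ to $U$ and rerun the Step~1 argument there, obtaining $|(z,\mathbb{A}u)|(U)\le\|z\|_{L^\infty}|\mathbb{A}u|(U)$ directly. Having this for all Lipschitz open sets (in particular all balls), the paper then appeals to the Besicovitch derivation theorem to conclude that $(z,\mathbb{A}u)\ll|\mathbb{A}u|$ with Radon--Nikod\'ym density bounded by $\|z\|_{L^\infty}$ in absolute value; the Borel-set bound follows by integrating the density.

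Your route via weak\textsuperscript{*} convergence of $|\mathbb{A}u_l|$ (which does follow from strict convergence, as you note) together with outer regularity is a legitimate alternative and arguably more elementary, since it avoids the differentiation theorem. The paper's approach, on the other hand, delivers the absolute continuity $(z,\mathbb{A}u)\ll|\mathbb{A}u|$ and the pointwise density bound as immediate byproducts; these are used later in the paper (Definition~\ref{theta} and Proposition~\ref{density}), so from the paper's perspective the Besicovitch route is not a detour.
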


\begin{proof}
Let $u_l\in C^\infty(\Omega,\R^m)$ and $u_l\rightarrow u$ in the strict topology in $\BVA(\Omega)\cap L^{p'}(\Omega,\R^m)$ as in Proposition~\ref{prop:strict}. Then we have that \begin{align}
&(z,\mathbb{A} u_l)(\phi)=\int_\Omega \phi z:\mathrm{D}u_l\dx=\int_\Omega \phi Az:\mathrm{D}u_l\dx=\int_\Omega\phi z:\mathbb{A} u_l\dx\leq \norm{\phi}_{\sup}\norm{z}_{L^\infty(\Omega)}\norm{\mathbb{A} u_l}.\end{align} Also $(z,\mathbb{A} u_l)(\phi)$ converges by definition, hence the functional extends to $C(\Omega)$, and by the Riesz representation theorem, it is a bounded Radon measure.

This also shows the bound for $U=\Omega$. For general open $U$ with Lipschitz boundary, it follows by restriction with the exact same argument. In particular, this shows by the Besicovitch derivation theorem \cite[Thm.\ 2.22]{Ambrosio}, that $(z,\mathbb{A}u)$ must be absolutely continuous with respect to $|\mathbb{A}u|$ and its density must have absolute value $\leq \norm{z}_{L^\infty}$ almost everywhere.\end{proof}

We prove some auxiliary convergence statements for later.

\begin{lemma}\begin{itemize}
\item[a)]  If\phantomsection\label{cor:pairing} $z_j\!\overset{\ast}{\rightharpoonup}\! z$ in $L^\infty(\Omega,\R^{m\times n})$ and $\div z_j\!\rightharpoonup \div z$ in $L^p(\Omega,\R^m)$ (if $p=\infty$ weak\star-convergence also suffices), then \begin{align}\int_\Omega (z_j,\mathbb{A} u)\rightarrow\! \int_\Omega (z,\mathbb{A} u)\end{align} for any $u\in \BVA(\Omega)\cap L^{p'}(\Omega,\R^m)$ (with $\frac{1}{p}+\frac{1}{p'}=1$).
\item[b)] For all fixed $z\in X_p(\Omega)$ we have that $\int_\Omega (z,\mathbb{A} u)$ is continuous with respect to strict convergence of $u$ in $\BVA(\Omega)\cap L^{p'}(\Omega,\R^m)$  (for $\frac{1}{p}+\frac{1}{p'}=1$).

 \end{itemize}
\end{lemma}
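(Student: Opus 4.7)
The plan is to treat both parts with a common cutoff argument: the pairing $(z,\mathbb{A}u)$ is defined as a distribution on $\Omega$, but the integrals in the statement are its total mass, so I must approximate evaluation against the constant function $1$. Fix an exhaustion of $\Omega$ by compacts $K_1\subset\mathrm{int}(K_2)\subset K_2\subset\cdots$, together with cutoffs $\phi_k\in C_c^\infty(\Omega)$ such that $\phi_k\equiv 1$ on $K_k$, $\supp\phi_k\subset K_{k+1}$, and $0\leq\phi_k\leq 1$. The argument then splits into (i) convergence of $\scalar{(z,\mathbb{A}u)}{\phi_k}$ for fixed $k$, and (ii) uniform smallness of the cutoff error; combining (i) and (ii) via a triangle inequality and letting first $j,l\to\infty$ and then $k\to\infty$ gives the claim.

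For step (i), unfolding the definition gives
\begin{align*}
\scalar{(z,\mathbb{A}u)}{\phi_k}=-\int_\Omega \phi_k\,u\cdot \div z \dx-\int_\Omega u\cdot ((\nabla\phi_k)\cdot z)\dx.
\end{align*}
In part (a), the first integral pairs $\phi_k u\in L^{p'}$ (with compact support) against $\div z_j\rightharpoonup\div z$ in $L^p$, and the second pairs $(\nabla\phi_k)u\in L^1$ against $z_j\overset{\ast}{\rightharpoonup}z$ in $L^\infty$; both converge as $j\to\infty$. Part (b) is dual: $\phi_k\div z\in L^p$ pairs with $u_l\to u$ in $L^{p'}$ (respectively weak-$\ast$ in $L^\infty$ when $p'=\infty$), while $(\nabla\phi_k)z\in L^\infty$ pairs with $u_l\to u$ strongly in $L^1$.

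The main obstacle, and the point where (a) and (b) genuinely differ, is the uniform-in-$j,l$ control of the cutoff error. By Proposition~\ref{anz pairing},
\begin{align*}
|\scalar{(z,\mathbb{A}u)}{1-\phi_k}|\leq\norm{z}_{L^\infty}\,|\mathbb{A}u|(\Omega\setminus K_k).
\end{align*}
In (a) the factor $\norm{z_j}_{L^\infty}$ is uniformly bounded (by Banach--Steinhaus, or directly from the weak-$\ast$ convergence hypothesis), while $|\mathbb{A}u|(\Omega\setminus K_k)\to 0$ because $\mathbb{A}u$ is a finite measure, so the error is uniformly small in $j$. In (b) I must instead bound $|\mathbb{A}u_l|(\Omega\setminus K_k)$ uniformly in $l$; this is exactly where strict convergence is essential. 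From $u_l\to u$ in $L^1$ together with the duality representation of the total $\mathbb{A}$-variation one obtains lower semicontinuity on open sets, $|\mathbb{A}u|(\mathrm{int}(K_k))\leq\liminf_l|\mathbb{A}u_l|(\mathrm{int}(K_k))$; combining this with $\norm{\mathbb{A}u_l}\to\norm{\mathbb{A}u}$ yields
\begin{align*}
\limsup_l |\mathbb{A}u_l|(\Omega\setminus K_k)\leq|\mathbb{A}u|(\Omega\setminus\mathrm{int}(K_k)),
\end{align*}
and the right-hand side vanishes as $k\to\infty$. Without strict convergence, mass of $\mathbb{A}u_l$ could escape to $\partial\Omega$ in the limit and this step would fail, which is precisely why merely weak-$\ast$ convergence of the measures is not enough for (b).
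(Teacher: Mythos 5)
Your proof is correct and follows essentially the same route as the paper's: a cutoff near $\partial\Omega$, convergence of the cut-off pairing directly from the definition, the bound of Proposition \ref{anz pairing} for the error in (a), and lower semicontinuity on open sets combined with strict convergence to control $|\mathbb{A}u_l|(\Omega\setminus K_k)$ in (b). The only cosmetic difference is that you use an exhaustion with a diagonal limit where the paper fixes a single $\eps$ and cutoff.
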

\begin{proof}
\textbf{a)} Pick some closed $\Omega'\subset\subset \Omega$ such that $|\mathbb{A}u|(\Omega\backslash \Omega')<\eps$ and choose $\phi\in C_c^1(\Omega)$ such that $\phi$ is $1$ on $\Omega'$ and $1\geq \phi\geq 0$. Then we have that \begin{align}
\int_\Omega \phi(z_j,\mathbb{A}u)\rightarrow\! \int_\Omega \phi(z,\mathbb{A}u)\end{align}
by definition. Since we can also estimate \begin{align}
\left|\int_\Omega \phi(z_j,\mathbb{A} u)-\int_\Omega (z_j,\mathbb{A}u)\right|\leq |\mathbb{A}u|(\Omega\backslash\Omega')\times \sup_j \norm{z_j}_{L^\infty}\leq \eps\sup_j \norm{z_j}_{L^\infty}
\end{align}
and the same estimate holds for $z$, we get the statement.\smallskip

\textbf{b)} Take $\phi$ and $\Omega'$ as in a) and let $u_l\rightarrow u$ strictly. We have  \begin{align}
\limsup_{l\rightarrow \infty} |\mathbb{A} u_l|(\Omega\backslash \Omega')\leq |\mathbb{A}u|(\Omega)-|\mathbb{A}u|(\Omega')\leq\eps,
\end{align}
by lower semicontinuity. This gives that \begin{align}
\mel\limsup_{l\rightarrow \infty}\left|\int_\Omega  (z,\mathbb{A} u_l)-\int_\Omega (z,\mathbb{A} u)\right|\\
&\leq \left(\limsup_{l\rightarrow \infty}|\mathbb{A} u_l|(\Omega\backslash \Omega')+\eps\right)\norm{z}_{L^\infty}+\left|\int_\Omega \phi(z,\mathbb{A}(u_l-u))\right|\rightarrow 0,\end{align} where the second summand goes to $0$ by the definition of the pairing.\end{proof}

One can characterise the absolutely continuous part  of this measure:

\begin{definition}\label{theta}
We set \begin{align}\theta(z,\mathbb{A} u):=\frac{\mathrm{d}(z,\mathbb{A}u)}{\mathrm{d}|\mathbb{A}u|}\label{def theta}
\end{align} (it follows from Proposition \ref{anz pairing} that $(z,\mathbb{A} u)<<|\mathbb{A} u|$ and hence the density exists).
\end{definition}

\begin{proposition}\label{density}
We have that \begin{align}\theta(z,\mathbb{A} u)=z:\frac{\mathrm{d}\mathbb{A} u}{\mathrm{d}|\mathbb{A} u|}\text{ a.e.\ w.r.t.\  $|\mathbb{A} u|^a$.}\end{align} 

Also $(z,\mathbb{A} u^s):=(z,\mathbb{A} u)- z:(\mathbb{A}u)^a$ is absolutely continuous with respect to $|\mathbb{A} u|^s$.
\end{proposition}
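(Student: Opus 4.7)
The second statement will follow algebraically from the first: by Proposition \ref{anz pairing} we can write $(z,\mathbb{A}u)=\theta\,|\mathbb{A}u|^{a}+\theta\,|\mathbb{A}u|^{s}$ with $\theta=\theta(z,\mathbb{A}u)$; once the first claim identifies $\theta\,|\mathbb{A}u|^{a}$ with $z\!:\!(\mathbb{A}u)^{a}\,\mathcal{L}^{n}$, the remainder $(z,\mathbb{A}u)-z\!:\!(\mathbb{A}u)^{a}$ equals $\theta\,|\mathbb{A}u|^{s}$, which is manifestly absolutely continuous with respect to $|\mathbb{A}u|^{s}$. So the problem reduces to showing that the Lebesgue-absolutely-continuous part of $(z,\mathbb{A}u)$ is $z\!:\!(\mathbb{A}u)^{a}\,\mathcal{L}^{n}$.

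\textbf{Mollification of $z$.} Fix $\Omega''\Subset\Omega$ and set $z_{\epsilon}:=z*\eta_{\epsilon}$ for $\epsilon<\dist(\Omega'',\partial\Omega)$. Each $z_{\epsilon}$ is continuous on $\Omega''$, so Example \ref{pairingtrivial} gives
\begin{align}
(z_{\epsilon},\mathbb{A}u)\;=\;z_{\epsilon}\cdot(\mathbb{A}u)^{a}\,\mathcal{L}^{n}\;+\;z_{\epsilon}\cdot(\mathbb{A}u)^{s}
\end{align}
as Radon measures on $\Omega''$. Since $z_{\epsilon}\to z$ pointwise a.e.\ with $|z_{\epsilon}|\le\|z\|_{L^{\infty}}$, dominated convergence yields $z_{\epsilon}(\mathbb{A}u)^{a}\to z(\mathbb{A}u)^{a}$ in $L^{1}(\Omega'')$. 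At the same time $z_{\epsilon}\overset{*}{\rightharpoonup}z$ in $L^{\infty}$ and $\div z_{\epsilon}\to\div z$ in $L^{p'}_{\mathrm{loc}}$. For any $\phi\in C_{c}^{\infty}(\Omega'')$, applying Lemma \ref{cor:pairing}(a) to the sequence $\phi z_{\epsilon}$ and using the identity $(\phi z,\mathbb{A}u)=\phi(z,\mathbb{A}u)$ from Example \ref{pairingtrivial} gives $\int\phi\,d(z_{\epsilon},\mathbb{A}u)\to\int\phi\,d(z,\mathbb{A}u)$. Subtracting the $L^{1}$-convergent a.c.\ piece, we obtain $z_{\epsilon}(\mathbb{A}u)^{s}\overset{*}{\rightharpoonup}\sigma:=(z,\mathbb{A}u)-z(\mathbb{A}u)^{a}\,\mathcal{L}^{n}$ as measures on $\Omega''$.

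\textbf{Singularity of $\sigma$.} It remains to show $\sigma\perp\mathcal{L}^{n}$. Every measure $z_{\epsilon}(\mathbb{A}u)^{s}$ is concentrated on the Borel set $N_{0}$ on which $(\mathbb{A}u)^{s}$ is concentrated, and $\mathcal{L}^{n}(N_{0})=0$. By outer regularity I pick, for each $\eta>0$, an open set $U_{\eta}\supset N_{0}$ that is a countable union of open balls with $\mathcal{L}^{n}(\overline{U_{\eta}})<\eta$ (using that balls have Lebesgue-null boundary). For every $\phi\in C_{c}(\Omega''\setminus\overline{U_{\eta}})$ one has $\int\phi z_{\epsilon}\,d(\mathbb{A}u)^{s}=0$, hence $\int\phi\,d\sigma=0$; consequently $|\sigma|(\Omega''\setminus\overline{U_{\eta}})=0$, and letting $\eta\to 0$ along a sequence shows that $|\sigma|$ is supported on an $\mathcal{L}^{n}$-null set. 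This identifies $z(\mathbb{A}u)^{a}\,\mathcal{L}^{n}$ as the Lebesgue-a.c.\ part of $(z,\mathbb{A}u)$ on $\Omega''$; since $\Omega''\Subset\Omega$ was arbitrary, the first assertion follows on $\Omega$, and the second is then immediate from the decomposition in Step 1.

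\textbf{Main obstacle.} The delicate step is the last one: weak-$*$ limits of measures individually singular with respect to $\mathcal{L}^{n}$ need not remain singular, so the argument must exploit that \emph{all} the $z_{\epsilon}(\mathbb{A}u)^{s}$ share the same Borel null concentration set $N_{0}$. Exhausting $N_{0}$ by open covers of arbitrarily small Lebesgue measure is exactly what converts this shared concentration into singularity of $\sigma$.
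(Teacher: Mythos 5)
Your reduction in Step 1 and the mollification/convergence statements in Step 2 are correct, and the overall route (mollify $z$, use that the pairing is the pointwise product for continuous integrands, pass to the limit) is essentially the paper's as well. The gap is in Step 3, and it is twofold. First, the construction of $U_\eta$ does not work: an open set containing a Borel null set $N_0$ can be forced to be dense (e.g.\ if $(\mathbb{A}u)^s$ is an infinite sum of atoms located on a dense countable set, which is perfectly possible for $\BVA$-functions), in which case $\overline{U_\eta}=\overline{\Omega''}$ no matter how you build $U_\eta$; writing it as a countable union of balls does not help, because the closure of a countable union is in general strictly larger than the union of the closures. Second, the heuristic in your ``Main obstacle'' paragraph is false: a shared null concentration set does \emph{not} prevent a weak\star\ limit from acquiring an absolutely continuous part. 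For instance, on $(0,1)$ the measures $\mu_k=\tfrac1k\sum_{j=1}^{k}\delta_{q_{j,k}}$, with $q_{j,k}$ a rational within $k^{-2}$ of $j/k$, are all concentrated on the fixed null set $\Q\cap(0,1)$ yet converge weakly\star\ to Lebesgue measure. So the singularity of $\sigma$ genuinely does not follow from what you have written.

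What actually rescues the argument is not the common concentration set but the uniform domination by a fixed singular measure: $|z_\epsilon(\mathbb{A}u)^s|\leq \norm{z}_{L^\infty}\,|\mathbb{A}u|^s$ for every $\epsilon$. By lower semicontinuity of the total variation under weak\star\ convergence one gets, for every open $V\subset\Omega''$,
\begin{align}
|\sigma|(V)\leq \liminf_{\epsilon\rightarrow 0}|z_\epsilon(\mathbb{A}u)^s|(V)\leq \norm{z}_{L^\infty}|\mathbb{A}u|^s(V),
\end{align}
hence $|\sigma|\leq \norm{z}_{L^\infty}|\mathbb{A}u|^s$ on all Borel sets by outer regularity, so $\sigma\ll|\mathbb{A}u|^s\perp\mathcal{L}^n$. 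This closes the gap and in fact delivers the second assertion of the proposition directly. It is also, in disguise, what the paper does: there one passes to the limit at the level of the densities $\theta(z_\epsilon,\mathbb{A}u)$ in $L^\infty(\Omega,|\mathbb{A}u|)$, so every object in sight is automatically absolutely continuous with respect to $|\mathbb{A}u|$ and the singularity question never arises; the limit is then identified on the absolutely continuous part by comparing the $L^\infty(|\mathbb{A}u|)$- and $L^\infty(\mathcal{L}^n)$-weak\star\ limits of $z_\epsilon$.
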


The proof requires the following Lemma.
\begin{lemma}\label{theta app}
Let $z_l$ be such that \begin{align}
C^1(\Omega,\R^{m\times n})\ni z_l\!\overset{*}{\rightharpoonup}\! z\text{ in $L^\infty(\Omega,\R^{m\times n})$}\end{align} and \begin{align}
\div z_l\!\rightharpoonup\! \div z\text{ in $L^p(\Omega',\R^{m\times n})$ (resp.\ $\div z_l\overset{\ast}{\rightharpoonup} \div z$ for $p=\infty$)}\end{align} for every open $\Omega'\subset\subset \Omega$, then $\theta(z_l,\mathbb{A} u)\overset{*}{\rightharpoonup} \theta(z,\mathbb{A} u)$ in $L^\infty(\Omega,|\mathbb{A} u|)$.
\end{lemma}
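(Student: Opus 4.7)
The plan is to test $\theta(z_l,\mathbb{A}u)$ against a dense class of functions in $L^1(\Omega,|\mathbb{A}u|)$, reducing the whole statement to part (a) of the preceding lemma by a localization trick with cutoffs.

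First, I would fix $\phi\in C_c^1(\Omega)$ and consider the sequence $\phi z_l$ in place of $z_l$. Since $\phi$ is compactly supported and $z_l\in C^1$, we have $\phi z_l\in X_p(\Omega)$ with
\begin{align}
\div(\phi z_l) = \phi\,\div z_l + z_l\,\grad\phi.
\end{align}
Setting $\Omega':=\supp\phi\subset\subset\Omega$, the first summand converges weakly in $L^p(\Omega)$ (weak-$*$ for $p=\infty$) since $\phi\,\div z_l\to\phi\,\div z$ weakly on $\Omega'$ and both vanish outside. For the second summand, $\grad\phi\in L^{p'}(\Omega)$ is compactly supported and $z_l\overset{*}{\rightharpoonup}z$ in $L^\infty$; because $\Omega$ is bounded, weak-$*$ convergence in $L^\infty$ implies weak convergence in $L^p$, so $z_l\,\grad\phi\rightharpoonup z\,\grad\phi$ in $L^p$. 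Also $\phi z_l\overset{*}{\rightharpoonup}\phi z$ in $L^\infty$. Hence the hypotheses of part (a) are met for the sequence $\phi z_l$, and we conclude
\begin{align}
\int_\Omega(\phi z_l,\mathbb{A}u)\longrightarrow \int_\Omega(\phi z,\mathbb{A}u).
\end{align}

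Next, using Example \ref{pairingtrivial} (i.e.\ $\phi(z,\mathbb{A}u)=(\phi z,\mathbb{A}u)$ for $\phi\in C^1(\overline{\Omega})$) and the definition of $\theta$, this rewrites as
\begin{align}
\int_\Omega \phi\,\theta(z_l,\mathbb{A}u)\,\dd|\mathbb{A}u|\longrightarrow \int_\Omega \phi\,\theta(z,\mathbb{A}u)\,\dd|\mathbb{A}u|
\end{align}
for every $\phi\in C_c^1(\Omega)$. Thus weak-$*$ testing against the class $C_c^1(\Omega)$ is settled.

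Finally I would upgrade to all test functions $\psi\in L^1(\Omega,|\mathbb{A}u|)$. By the uniform boundedness principle, $M:=\sup_l\norm{z_l}_{L^\infty}<\infty$, and Proposition \ref{anz pairing} gives $\norm{\theta(z_l,\mathbb{A}u)}_{L^\infty(|\mathbb{A}u|)}\leq M$ as well as $\norm{\theta(z,\mathbb{A}u)}_{L^\infty(|\mathbb{A}u|)}\leq \norm{z}_{L^\infty}\leq M$. Since $|\mathbb{A}u|$ is a finite Radon measure on $\Omega$, the space $C_c^1(\Omega)$ is dense in $L^1(\Omega,|\mathbb{A}u|)$. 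Given $\psi\in L^1(\Omega,|\mathbb{A}u|)$ and $\eps>0$, choose $\phi\in C_c^1(\Omega)$ with $\norm{\psi-\phi}_{L^1(|\mathbb{A}u|)}<\eps$ and split
\begin{align}
\int_\Omega \psi\,\theta(z_l,\mathbb{A}u)\,\dd|\mathbb{A}u|-\int_\Omega \psi\,\theta(z,\mathbb{A}u)\,\dd|\mathbb{A}u|
\end{align}
into the $\phi$-integral (which vanishes in the limit by the previous step) plus two error terms each bounded by $2M\eps$. Letting $l\to\infty$ and then $\eps\to 0$ yields the claim.

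The only real subtlety, and hence the main thing to be careful about, is the verification that $\div(\phi z_l)\rightharpoonup\div(\phi z)$ in $L^p(\Omega)$ (not just $L^p_{\mathrm{loc}}$), which rests on the compact support of $\phi$ turning the assumed local convergence of $\div z_l$ into global convergence on $\Omega$; everything else is a routine density/uniform boundedness argument.
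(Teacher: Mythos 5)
Your proof is correct and follows essentially the same route as the paper: establish convergence in duality with $C_c^1(\Omega)$, then upgrade to weak-$*$ convergence in $L^\infty(\Omega,|\mathbb{A}u|)$ via the uniform bound $\norm{\theta(z_l,\mathbb{A}u)}_{L^\infty(|\mathbb{A}u|)}\leq\norm{z_l}_{L^\infty}$ and the density of $C_c^1$ in $L^1(\Omega,|\mathbb{A}u|)$. The only cosmetic difference is that you obtain the $C_c^1$-duality step by applying part (a) to $\phi z_l$ via the product rule for $\div(\phi z_l)$, whereas the paper reads $\int_\Omega\phi(z_l,\mathbb{A}u)\rightarrow\int_\Omega\phi(z,\mathbb{A}u)$ off directly from the distributional definition of the pairing.
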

\begin{proof}
By definition we have that $\int_\Omega\phi(z_l,\mathbb{A} u)\rightarrow \int_\Omega \phi(z,\mathbb{A} u)$ for $\phi\in C_c^1(\Omega)$, so convergence holds in duality with $C_c^1(\Omega)$. As \begin{align}\norm{\theta(z_l,\mathbb{A} u)}_{L^\infty(\Omega,|\mathbb{A} u|)}\leq \norm{z_l}_{L^\infty}<\infty,\end{align} there is a weakly\star convergent subsequence of the $\theta(z_l,\mathbb{A} u)$ in $L^\infty(\Omega,|\mathbb{A} u|)$. It is easy to see that $C_c^1$-functions are dense in $L^1(\Omega,|\mathbb{A} u|)$ and hence the limit is $\theta(z,\mathbb{A}u)$. \end{proof}

\begin{proof}[Proof of Proposition \ref{density}]
We want to use the Lemma and the fact that for $z\in C^1(\overline{\Omega},\R^{m\times n})$ the statement is trivially true because the measure is just the pointwise product by Example \ref{pairingtrivial}.

We can find $z_l$ as in the lemma by extending $z$ as $0$ to $\R^{n}$ and using mollification. It is easy to see that the weak\star limit in $L^\infty(|\mathbb{A}u|)$ and the weak\star limit with respect to the Lebesgue measure must be the same a.e.\ w.r.t.\ $|\mathbb{A}u|^a$.\smallskip

\begin{comment}
We can restrict to a Borel set $B$ which has full measure with respect to $|\mathbb{A} u|^a$ and is a null set with respect to $|\mathbb{A} u|^s$. We have that \begin{align}z_l:\frac{\mathrm{d}\mathbb{A} u}{\mathrm{d}|\mathbb{A} u|}=\theta(z_l,\mathbb{A} u)\overset{*}{\rightharpoonup} \theta(z,\mathbb{A} u)\end{align} on $B$ in $L^\infty(|\mathbb{A} u|)$ by the Lemma.

 Furthermore, $|\mathbb{A}u|^a$ has a density $k\in L^1$ with respect to $\mathcal{L}^n$.

On the other hand, we also have that $z_l\overset{*}{\rightharpoonup} z$ on $B$ in $L^\infty(|\mathbb{A} u|)$ because for any continuous $f$ \begin{align}
\lim_{l\rightarrow \infty} \int_B fz_l\mathrm{d}|\mathbb{A} u|=\lim_{l\rightarrow \infty} \int_B fz_lk\dx=\int_B fzk\dx=\int_B fz\mathrm{d}|\mathbb{A} u|,
\end{align}
where we used that $fk$ is integrable with respect to the Lebesgue measure. Continuous functions are dense in $L^1(|\mathbb{A} u|)$ and we get the weak\star convergence. Therefore $\theta(z,\mathbb{A} u)=z:\frac{\mathrm{d}\mathbb{A} u}{\mathrm{d}|\mathbb{A} u|}$ a.e.\ w.r.t. $|\mathbb{A}u|^a$ as weak\star\ limits are unique (up to a null set).\medskip\end{comment}

The second statement in the proposition trivially follows from the first.\end{proof}

In the case $\mathbb{A}=\mathrm{D}$, there are also known characterisations of the density on the singular part, see e.g.\ \cite[Prop.\ 1.6]{Bouchitte} (see also the original unpublished work \cite{anzellotti1983traces}). For other differential operators, such characterisations are an open problem to the best of the author's knowledge.

\subsection{Green's formula}
We can generalize the Green's formula defined for $C^1$ functions in \eqref{Green1} to the space $X_p$, which first requires some notion of trace for $X_p$:

\begin{proposition}\label{normal trace} 
Let $p\in [1,\infty]$. The normal trace on $C^1(\overline{\Omega},\R^{m\times n})$, defined by \begin{align}
(\scalar{z}{\nu})_j:=\sum_{i}z_{ji}\nu_i
\end{align}
on $\de \Omega$, where $\nu_i$ are the components of the (outer) unit normal, has a continuous and linear extension to $X_p(\Omega)$. For every $z\in X_p(\Omega)$ it holds that  \begin{align}\norm{\scalar{z}{\nu}}_{L^\infty(\de\Omega,\R^m)}\leq \norm{z}_{L^\infty(\Omega,\R^{m\times n})}.\end{align} 
Furthermore, for $\phi\in (W^{1,1}\cap L^{p'})(\Omega,\R^m)$ (with $\frac{1}{p}+\frac{1}{p'}=1$), we also have a Green's formula \begin{align}
\int_\Omega \scalar{\div(z)}{\phi}+z:\mathrm{D} \phi\dx=\int_{\de\Omega} \scalar{\phi}{\scalar{z}{\nu}}\dd\mathcal{H}^{n-1}.\label{green regular}\end{align}\end{proposition}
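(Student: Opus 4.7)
The plan is to define the normal trace by duality using Green's formula as the definition, and obtain the $L^\infty$-bound by a tubular extension construction.

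For $z\in X_p(\Omega)$ and $\phi\in W^{1,1}(\Omega,\R^m)\cap L^{p'}(\Omega,\R^m)$ (with $1/p+1/p'=1$), define
\[
T_z(\phi):=\int_\Omega\scalar{\div z}{\phi}+z:\mathrm{D}\phi\dx,
\]
which is finite by H\"older. For $\phi\in C_c^\infty(\Omega,\R^m)$, $T_z(\phi)=0$ by the definition of distributional divergence (using $Az=z$ so that $z:\mathrm{D}\phi=z:\mathbb{A}\phi$). Since $T_z$ is continuous in the norm $\norm{\phi}_{L^{p'}}+\norm{\mathrm{D}\phi}_{L^1}$ and $C_c^\infty$ is dense in $W_0^{1,1}(\Omega)\cap L^{p'}(\Omega)$ in this norm (via truncation and mollification), $T_z$ vanishes on $W_0^{1,1}\cap L^{p'}$. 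Hence $T_z(\phi)$ depends only on $\psi:=\phi|_{\de\Omega}$, and we may set $\la\scalar{z}{\nu},\psi\ra:=T_z(\phi)$.

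The crucial estimate is $|T_z(\phi)|\leq\norm{z}_{L^\infty}\norm{\psi}_{L^1(\de\Omega)}$, which by Riesz representation identifies $\scalar{z}{\nu}$ with an $L^\infty(\de\Omega,\R^m)$-function obeying the stated norm bound. For $\psi\in L^\infty(\de\Omega,\R^m)$ we build extensions $\phi_\eps$ supported in $\{\dist(\cdot,\de\Omega)<\eps\}$ with $\phi_\eps|_{\de\Omega}=\psi$, $\norm{\phi_\eps}_{L^{p'}}\to 0$, and $\limsup_{\eps\to 0}\norm{\mathrm{D}\phi_\eps}_{L^1}\leq\norm{\psi}_{L^1(\de\Omega)}$. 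A finite partition of unity reduces this to the case where $\de\Omega$ is locally a Lipschitz graph, in which tubular coordinates $(y,t)\in\de\Omega\times[0,\eps]$ are available, and we take $\phi_\eps(y,t):=\tilde\psi(y)(1-t/\eps)_+$, with $\tilde\psi$ a bounded tangential mollification of $\psi$. Since $T_z(\phi_\eps)$ is independent of $\eps$ (all $\phi_\eps$ share the same trace), passing to the limit in
\[
|T_z(\phi_\eps)|\leq\norm{\div z}_{L^p}\norm{\phi_\eps}_{L^{p'}}+\norm{z}_{L^\infty}\norm{\mathrm{D}\phi_\eps}_{L^1}
\]
yields the bound for $\psi\in L^\infty$, and density of $L^\infty$ in $L^1(\de\Omega)$ extends it. The Green's formula \eqref{green regular} then follows by definition of $\scalar{z}{\nu}$. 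For $z\in C^1(\overline{\Omega})$, the classical divergence theorem identifies our $\scalar{z}{\nu}$ with the pointwise normal trace $(\scalar{z}{\nu})_j=\sum_i z_{ji}\nu_i$, so our construction really extends the classical one; linearity and continuity of $z\mapsto\scalar{z}{\nu}$ are immediate.

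The main obstacle is the tubular extension construction: on a Lipschitz boundary the outer normal is only $\mathcal{H}^{n-1}$-a.e.\ defined, and the tangential derivatives of the smoothed $\tilde\psi$ must not contribute excessively to $\norm{\mathrm{D}\phi_\eps}_{L^1}$. A double-limit argument -- tangentially mollifying $\psi$ on a scale $\delta$ with $\delta\gg\eps$, sending first $\eps\to 0$ and afterwards $\delta\to 0$ -- resolves both issues, using the $O(\eps)$-measure of the collar to absorb the $\delta$-dependent tangential-derivative contributions while the dominant normal derivative $-\tilde\psi/\eps$ produces exactly $\norm{\psi}_{L^1(\de\Omega)}$ in the limit.
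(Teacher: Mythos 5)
Your argument is correct and is essentially a self-contained reconstruction of the result the paper merely cites: the proof in the text consists of applying Anzellotti's Theorems 1.1 and 1.2 row-wise, and your duality definition of $\scalar{z}{\nu}$, the reduction to traces via density of $C_c^\infty$ in $W_0^{1,1}\cap L^{p'}$, the collar extensions concentrating the gradient in the normal direction, and the $(L^1)^*=L^\infty$ identification are exactly the ingredients of that cited construction. Two small points to watch: for $p=1$ (so $p'=\infty$) the quantity $\norm{\phi_\eps}_{L^{p'}}$ does not tend to zero, and you should instead use the shrinking support of $\phi_\eps$ together with the absolute continuity of $\int|\div z|$ to kill the first term (and replace norm-density of $C_c^\infty$ in $L^\infty$ by weak\star\ approximation); and your added hypothesis $\phi\in L^{p'}$ in the Green's formula is the right reading of the statement, since otherwise $\int_\Omega\scalar{\div z}{\phi}\dx$ need not converge for large $p$.
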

\begin{proof}
This follows by applying \cite[Thm.\ 1.1 and 1.2]{Anzellotti1983PairingsBM} to each row of $z$. The Green's formula is part of the construction in the proof there.
\end{proof}

\begin{proposition}
Let $\mathbb{A}$ be $\mathbb{C}$-elliptic, $z\in X_p(\Omega)$ and $u\in\BVA(\Omega)\cap L^{p'}$ for $\frac{1}{p}+\frac{1}{p'}=1$ and $p'<\infty$. Then we have the Green's formula \begin{align}
\int_{\de\Omega} \scalar{u}{\scalar{z}{\nu}}\dd\mathcal{H}^{n-1}=\int_\Omega(z,\mathbb{A} u)+\int_\Omega \scalar{u}{\div(z)}\dx.\label{Greens form}
\end{align}
\end{proposition}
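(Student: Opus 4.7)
The plan is to argue by approximation: replace $u$ with a sequence of smooth functions $u_l \in C^\infty(\overline{\Omega},\R^m)$ converging to $u$ strictly in $\BVA(\Omega)\cap L^{p'}(\Omega,\R^m)$, prove the identity for these, and pass to the limit. Since $\mathbb{A}$ is $\mathbb{C}$-elliptic and $p'<\infty$, Lemma \ref{strict 2} guarantees the existence of such a sequence. The assumption $p'<\infty$ is what makes strict approximation plus strong $L^{p'}$-convergence available; this is important because the pairing with $\div z \in L^p$ will require $L^{p'}$-convergence on the interior, and strict convergence will give the trace convergence and the convergence of the Anzelotti pairing.

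For each smooth $u_l$, apply the regular Green's formula \eqref{green regular} with $\phi = u_l$ (row-wise) to obtain
\begin{align}
\int_\Omega \scalar{\div z}{u_l}\dx + \int_\Omega z:\mathrm{D}u_l\dx = \int_{\de\Omega}\scalar{u_l}{\scalar{z}{\nu}}\dx.
\end{align}
Using that $A$ is a symmetric idempotent and $Az=z$, we have $z:\mathrm{D}u_l = Az:\mathrm{D}u_l = z:A\mathrm{D}u_l = z:\mathbb{A}u_l$, and by Example \ref{pairingtrivial} (where $u_l \in W^{\mathbb{A},1}$), this equals the Anzelotti pairing $(z,\mathbb{A}u_l)$ integrated over $\Omega$. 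Hence
\begin{align}
\int_\Omega \scalar{\div z}{u_l}\dx + \int_\Omega (z,\mathbb{A}u_l) = \int_{\de\Omega}\scalar{u_l}{\scalar{z}{\nu}}\dx.
\end{align}

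Now pass to the limit $l\to\infty$ term by term. The first term on the left converges because $u_l \to u$ in $L^{p'}(\Omega,\R^m)$ and $\div z \in L^p(\Omega,\R^m)$. The second term converges by part (b) of the continuity lemma for the Anzelotti pairing (strict continuity of $u\mapsto \int_\Omega(z,\mathbb{A}u)$ with $z$ fixed). The boundary term converges because the trace is continuous with respect to strict convergence (Theorem \ref{trace}), so $u_l|_{\de\Omega} \to u|_{\de\Omega}$ in $L^1(\de\Omega,\R^m)$, while $\scalar{z}{\nu}\in L^\infty(\de\Omega,\R^m)$ by Proposition \ref{normal trace}. Combining all three limits gives the claimed identity.

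The main obstacle, such as it is, is to make sure every convergence is under control simultaneously; in particular, the need for strict convergence in $\BVA\cap L^{p'}$ (rather than just strict convergence in $\BVA$) is exactly what forces the hypothesis $p'<\infty$, since Lemma \ref{strict 2} provides smooth approximation up to the boundary only in that range. Everything else is a straightforward identification of the various limits, making use of the trace continuity for $\BVA$ and the strict continuity of the pairing established in the previous subsection.
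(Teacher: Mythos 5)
Your proof is correct and follows essentially the same route as the paper: establish the identity for smooth (or $W^{1,1}$) functions via the Green's formula \eqref{green regular} of Proposition \ref{normal trace}, then pass to the limit along a strict approximating sequence from Lemma \ref{strict 2}, using Theorem \ref{trace} for the boundary term and part b) of the pairing continuity lemma for the interior term. The identification $z:\mathrm{D}u_l = z:\mathbb{A}u_l$ via $Az=z$ and the symmetry and idempotence of $A$ is also exactly as in the paper.
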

\begin{proof}
For $u\in W^{1,1}\cap L^{p'}$, this is just the formula from the Proposition above, because the Anzelotti pairing is just the pointwise product by Example \ref{pairingtrivial}.

For general $u$, we strictly approximate in $ \BVA(\Omega)\cap L^{p'}(\Omega,\R^m)$ with $C^\infty(\overline{\Omega},\R^m)$-functions, which is possible by Lemma \ref{strict 2} and makes both sides converge by Thm.\ \ref{trace} and Lemma \ref{cor:pairing} b).\end{proof}

In general, we do not have a Green's formula if $\mathbb{A}$ is not $\mathbb{C}$-elliptic, since there is no trace, but if the normal trace of $z$ is zero, partial integration still works:

\begin{proposition}\label{int by parts}
Suppose that $z\in X_p(\Omega)$ is such that $\scalar{z}{\nu}=0$, then for all $u\in \BVA(\Omega)\cap L^{p'}(\Omega,\R^m)$ with $\frac{1}{p}+\frac{1}{p'}=1$ it holds true that \begin{align}
\int_\Omega (z,\mathbb{A}u)=-\int_\Omega \scalar{u}{\div z}\dx.
\end{align}
\end{proposition}
\begin{proof}
We use Lemma \ref{approx z} below, for such smooth $z_k$, the formula clearly holds, and we can take the limit of the right-hand side. By Lemma \ref{cor:pairing} a), we can also take the limit of the left-hand side.\end{proof}

\begin{lemma}\phantomsection\label{approx z} For every $z\in X_p(\Omega)$ with $\scalar{z}{\nu}=0$ there are $z_k\in  C_c^\infty(\Omega,\R^{m\times n})$ such that \begin{align}&z_k\overset{*}{\rightharpoonup} z\text{ in $L^\infty(\Omega,\R^{m\times n})$}\\
&\div z_k\rightarrow \div z\text{  in $L^{p}(\Omega,\R^{m})$}
\end{align} (resp.\ $\overset{*}{\rightharpoonup}$ if $p=\infty$).
\end{lemma}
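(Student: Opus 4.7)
The plan is to extend $z$ by zero to all of $\R^n$, use the vanishing normal trace to show that the extended distributional divergence is still an $L^p$ function with no boundary concentration, and then produce the desired smooth compactly supported approximants by translating the support slightly into $\Omega$ along inward directions supplied by the Lipschitz atlas, and mollifying.

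First I would set $\bar z:=z\chi_\Omega$ on $\R^n$. Testing $\div\bar z$ against an arbitrary $\varphi\in C_c^\infty(\R^n,\R^m)$ and applying the Green's formula from Proposition \ref{normal trace} to $z$ and $\varphi|_\Omega\in W^{1,1}(\Omega,\R^m)$, together with the hypothesis $\scalar{z}{\nu}=0$, gives
$$-\int_{\R^n}\bar z:\mathrm{D}\varphi\,\dx=-\int_\Omega z:\mathrm{D}\varphi\,\dx=\int_\Omega\scalar{\varphi}{\div z}\,\dx,$$
so $\div\bar z$ is the zero-extension of $\div z$. In particular $\bar z\in L^\infty(\R^n,\R^{m\times n})$, $\div\bar z\in L^p(\R^n,\R^m)$, and $A\bar z=\bar z$. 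This identification is the only place where $\scalar{z}{\nu}=0$ enters, and I expect it to be the main technical point of the proof; everything that follows is a standard translation-and-mollification argument.

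Next I would cover $\overline\Omega$ by finitely many open sets $U_0\subset\subset\Omega$ and $U_1,\ldots,U_N$ chosen so that, for each $i\geq1$, in suitable rotated coordinates $\Omega\cap U_i$ is the subgraph of a Lipschitz function, and there is a fixed unit vector $e_i$ with $K-te_i\subset\Omega$ for every compact $K\subset\overline\Omega\cap U_i$ and all sufficiently small $t>0$. Pick a smooth partition of unity $\{\phi_i\}_{i=0}^N$ of $\overline\Omega$ subordinate to $\{U_i\}$ with each $\phi_i$ compactly supported in $U_i$, and for $t,\eps>0$ and a standard mollifier $\eta_\eps$ define
$$z_{t,\eps}:=(\phi_0\bar z)*\eta_\eps+\sum_{i=1}^N\bigl(\tau_{te_i}(\phi_i\bar z)\bigr)*\eta_\eps,\qquad(\tau_v f)(x):=f(x+v).$$
Since $A$ commutes with multiplication by scalars, translation, and mollification, each summand satisfies $Az_{t,\eps}=z_{t,\eps}$; for each fixed $t>0$ and $\eps$ small enough (depending on $t$), each summand has support in a compact subset of $\Omega$, so $z_{t,\eps}\in C_c^\infty(\Omega,\R^{m\times n})$.

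Finally, strong continuity of translation and mollification on $L^p(\R^n)$ for $p<\infty$ (and the analogous weak-$*$ continuity on $L^\infty$) immediately gives $z_{t,\eps}\overset{*}{\rightharpoonup}\bar z$ in $L^\infty(\R^n)$ as $(t,\eps)\to 0$. The product rule gives $\div(\phi_i\bar z)=\phi_i\,\div\bar z+\nabla\phi_i\cdot\bar z\in L^p(\R^n)$, and since translation and mollification commute with the distributional divergence, the same continuity properties yield
$$\div z_{t,\eps}\longrightarrow\sum_{i=0}^N\div(\phi_i\bar z)=\div\bar z\quad\text{in }L^p(\R^n)$$
(respectively weak-$*$ in $L^\infty$ for $p=\infty$), where the last equality uses $\sum_i\phi_i\equiv 1$ on a neighbourhood of $\supp\bar z$ and hence $\sum_i\nabla\phi_i\cdot\bar z=0$. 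Extracting a diagonal sequence $(t_k,\eps_k)\to 0$ and restricting to $\Omega$ produces the desired sequence $z_k$.
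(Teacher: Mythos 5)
Your proof is correct. The crucial step --- using the Green's formula of Proposition \ref{normal trace} together with $\scalar{z}{\nu}=0$ to show that the zero extension $\bar z$ lies in $X_p(\R^n)$ with $\div\bar z$ equal to the zero extension of $\div z$ --- is exactly the one the paper uses, and, as you anticipated, it is the only place where the normal-trace hypothesis enters. The two arguments differ only in how the support is then pushed into the interior before mollifying: the paper partitions $\Omega$ into finitely many star-shaped Lipschitz pieces and, after localizing with a partition of unity, dilates each piece towards a centre point, whereas you localize with a boundary atlas and translate each piece along an inward direction furnished by the Lipschitz graph representation. Both deformations commute with the divergence in the required way and yield the same uniform $L^\infty$ bounds, so the choice is largely a matter of taste; your translation variant avoids the star-shaped decomposition (for which the paper cites an external reference) at the cost of slightly more bookkeeping in the covering and the diagonal extraction.
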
\begin{proof}

One can partition $\Omega$ into finitely many star-shaped domains \cite[Prop.\ 2.5.4]{carbone2019unbounded}, so by using a partition of unity, we may assume that $z$ is supported in a star-shaped open subdomain $\Omega'\subset \Omega$. 

Next, we note that by the Green's formula in Proposition \ref{normal trace}, the extension $\overline{z}$ of $z$ by $0$ is in $X_p(\R^{n})$ and that the distributional divergence of $\overline{z}$ is the extension of $\div z$ by $0$.

Taking a sequence of standard mollifiers $\eta_l\xrightharpoonup{*} \delta_0$ supported in $B_{\frac{1}{l}}(0)$ and setting \begin{align}\label{def z'}
 z_l':=\overline{z}*\eta_l\end{align} gives that $z_l'$ is smooth, supported in $\Omega'+B_{\frac{1}{l}}(0)$ and lies in $X_p(\R^{n})$.

Now let $s$ be a center point of $\Omega'$ and let $d:=\dist(s,\de\Omega')$, then we define \begin{align}\label{def z}
z_l(x):=z_l'\left(s+\frac{d+\frac{2}{l}}{d}(x-s)\right),\end{align} this function is smooth. It has compact support in $\Omega'$
because $\Omega'$ is star-shaped and $z_l'$ is supported in $\Omega'+B_{\frac{1}{l}}(0)$.
 %because whenever $s+\frac{d}{d+\frac{2}{k}}(x-s)\in \Omega'+B_{\frac{1}{k}}(0)$ then $|x-s|\leq d$ and $x$ must lie in the interior of $\Omega'$ by the assumption that $\Omega'$ is star-shaped.

We have that \begin{align}\div z_k=\frac{d+\frac{2}{l}}{d}(\div \overline{z}*\eta_k)\left(s+\frac{d+\frac{2}{l}}{d}(\cdot-s)\right)\end{align} by the chain rule and the fact that the extension of $z$ lies in $X_p(\R^n)$. It is easy to see that this converges to $\div z$ in $L^p(\Omega')$ and that $z_k$ converges to $z$ in every $L^q(\Omega')$. As $z_k$ is uniformly bounded by definition it also converges weakly\star in $L^\infty$.\end{proof}

\begin{comment}
\begin{lemma}\label{star shaped domain}
Any bounded Lipschitz domain can be written as a finite union of star-shaped Lipschitz domains.\end{lemma}\begin{proof}
It is enough to consider domains of the form \begin{align}
\Omega'=\left\{(x,y)\in B_r(0)\times \R\mid 0< y<f(x)\right\}
\end{align}
where $f:B_r(0)\rightarrow \R_{>0}$ is Lipschitz, because by definition $\Omega$ can be written as a finite union of rotations of these.
 
Let $L$ be the Lipschitz constant of $f$. We can assume that \begin{align}\label{r small}
4Lr\leq f(0),
\end{align} otherwise we write $B_r$ as a union of smaller balls. We want to show that $s:=(0,\frac{f(0)}{2})$ is a center point.

Consider a point $x$ along the line segment $[0,y)$ for $y\in B_r(0)\backslash\{0\}$, to show that $\Omega$ is star-shaped, we need that \begin{align}
&\frac{f(0)}{2}+\frac{|x|}{|y|}\left(f(y)-\frac{f(0)}{2}\right)\leq f(x).
\end{align}
Inserting $f(y)\leq f(x)+L(|y|-|x|)$ in the left-hand side we need that \begin{align}
&\frac{f(0)}{2}+\frac{|x|}{|y|}\left(f(x)+L(|y|-|x|)-\frac{f(0)}{2}\right)\leq f(x)\\
\iff &\frac{f(0)}{2}+L|x|\leq f(x).
\end{align}

Inserting $f(x)\geq f(0)-L|x|$ this turns into \begin{align}
&\frac{f(0)}{2}+L|x|\leq \left(f(0)-L|x|\right)
\end{align} 

which holds true by the Assumption \eqref{r small} on $r$ since $|x|<r$.\end{proof}\end{comment}

\section{Main results for Neumann boundary conditions}\label{section 4}

We shall always assume the following about $f$:

\begin{assumption}\label{assf} Let $f:\overline{\Omega} \times \R^{m\times n}\rightarrow \R_{\geq 0}$ be Borel measurable,  convex in the second variable and assume that there are $c_0,C_0>0$ with \begin{align}
c_0|Ay|\leq f(x,y)\leq C_0(1+|Ay|)\end{align}
for all $x,y$ (with the constants not depending on $x,y$). 

\end{assumption}

We remark that assuming positivity of $f$ is not restrictive, because adding a constant to the function corresponds to adding a constant to the functional $F$ (defined below), which does not change the behaviour.

We further remark that the growth condition implies that $f$ depends only on $x$ and $Ay$ and that it is Lipschitz in $y$ with the same constant $C_0$. Assuming that $f$ only depends on $Ay$ is not restrictive, because we only care about functionals of the type $\int_\Omega f(x,\mathbb{A}u)\dx$ and $A^2=A$ as justified in \ref{bas ass}. 

Sometimes we also want $f$ to be a bit more regular: \begin{assumption}\label{assf2}
Suppose $f$ fulfills the assumption \ref{assf}. Further suppose that $f$ is continuous in the joint variable and that the limit in the definition \ref{def rec} of the recession function exists for all $(x,y)\in \overline{\Omega}\times \R^{m\times n}$.
\end{assumption}

If we want to assume \ref{assf2}, we will always explicitly mention it.

\begin{definition}We\label{definition F} define \begin{align}
\mathcal{Z}:=\left\{z\in  X_2(\Omega)\mid f^*(x,z(x))\in L^1(\Omega)\right\},
\end{align} 
where the Legendre-transform is taken with respect to the last variable of $f$ only.

For  $u\in  L^{2}(\Omega,\R^m)$ we define\begin{align}F_{dual}(u):=\begin{cases}
\sup_{z\in\mathcal{Z}}\int_\Omega -f^*(x,z(x))\dx+\int_\Omega (z,\mathbb{A} u)&\text{if $u\in \BVA(\Omega)$}\\
+\infty &\text{else.}\end{cases}\end{align}
\end{definition}

\begin{definition}
For $u\in L^2(\Omega,\R^m)$, we define the functional \begin{align}F_{relaxed}(u):=\inf_{W^{\mathbb{A},1}(\Omega)\ni u_l\rightarrow u \text{ in } L^{2}}\liminf_{l\rightarrow \infty}\int_\Omega f(x,\mathbb{A}u_l(x))\dx.\label{def relax}\end{align}

This is called the relaxation of the functional $\int_\Omega f(x,\mathbb{A}u)\dx$.
\end{definition}

Note that by Mazur's Lemma and convexity, we get the same functional if we only require weak $L^2$-convergence of the $u_l$. We further note that $F_{relaxed}$ is trivially lower-semicontinuous w.r.t.\ $L^2$-convergence, as one can see by using a diagonal sequence.

Our first result is that both functionals always agree, with essentially no assumption (except convexity and measurability) on $f$.

\begin{theorem}\label{T relax 1}
Assume \ref{assf}. Then \begin{align}F_{relaxed}=F_{dual}.\end{align}
Additionally, for every $u\in \BVA(\Omega)\cap L^2(\Omega,\R^m)$, a sequence $u_l$ attaining the infimum in \eqref{def relax} can be taken in $W^{\mathbb{A},1+\frac{1}{l}}(\Omega)\cap L^2(\Omega,\R^m)$.
\end{theorem}

We postpone the proof to Section \ref{proof section}.

In the case in which $f$ is more regular, more standard formulas for the representation of $F$ hold:

\begin{proposition}\label{resh1}
Assume \ref{assf} and \ref{assf2}. Then for all $u\in \BVA(\Omega)\cap L^2(\Omega,\R^m)$ it  holds that \begin{align}
F_{relaxed}(u)=F_{dual}(u)=\int_\Omega f(x,(\mathbb{A}u)^a)\dx+\int_\Omega f^\infty(x,\frac{(\mathbb{A}u)^s}{|\mathbb{A}u|^s}(x))\dd |\mathbb{A}u|^s(x).
\end{align}

In particular, if $\mathbb{A}u\in L^1(\Omega,\R^{m\times n})$, then $F_{relaxed}(u)=F_{dual}(u)=\int_\Omega f(x,\mathbb{A}u(x))\dx$.
\end{proposition}
\begin{proof}
This can be proven exactly as for functionals depending on the full gradient, see e.g.\ \cite[Thm.\ 11.2]{rindler2018calculus} for a proof without the $L^2$ restriction. Requiring the convergence in $L^2$ is not a restriction, since smooth recovery sequences can be taken to converge in $L^2$ by Proposition \ref{prop:strict}.
\end{proof}

We remark that there are very classical counterexamples showing that this is wrong without the Assumption \ref{assf2} even for the full gradient in one dimension, see e.g.\ \cite[p.\ 513]{Bouchitte}.

Our second main theorem is that we can characterise the subdifferential of $F$ as the set of all $z$ for which equality in the definition of $F$ is attained, as explained in the introduction.

\begin{theorem}\label{main thm 1} Assume \ref{assf}. Then for all $u,r\in L^2(\Omega,\R^m)$ the following conditions are equivalent: \begin{itemize}
\item[a)] $r\in \de F_{dual}(u)$.
\item[b)] There is a $z\in \mathcal{Z}$ such that for all $w\in\BVA(\Omega)\cap L^{2}(\Omega,\R^m)$, we have the following inequality: \begin{align}
\int_\Omega \scalar{w-u}{r}\dx\leq \int_\Omega -f^*(x,z)\dx+\int_\Omega (z,\mathbb{A} w)-F_{dual}( u).\label{b 1}\end{align}

\item[c)] There is a $z\in \mathcal{Z}$ such that for all $w\in\BVA(\Omega)\cap L^{2}(\Omega,\R^m)$, we have the following equality: \begin{align}
\int_\Omega \scalar{w-u}{r}\dx= \int_\Omega -f^*(x,z)\dx+\int_\Omega (z,\mathbb{A} w)-F_{dual}( u).\end{align}

\item[d)] There is a $z\in \mathcal{Z}$ such that $r=-\div z$ and $\scalar{z}{\nu}=0$ (the normal trace $\scalar{z}{\nu}$ was defined in Proposition \ref{normal trace}) and \begin{align}
F_{dual}( u)=\int_\Omega-f^*(x,z)\dx+\int_\Omega (z,\mathbb{A} u).\end{align}
\end{itemize}
\end{theorem}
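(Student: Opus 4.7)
My plan is to close the equivalences via the easy chain $d) \Rightarrow c) \Rightarrow b) \Rightarrow a)$ together with the extraction $c) \Rightarrow d)$, and then handle the substantive direction $a) \Rightarrow d)$ via a maximal-monotone-operator argument.

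For the easy chain: assuming $z$ as in $d)$, Theorem \ref{int by parts} rewrites $\int_\Omega\scalar{w-u}{r}\dx = \int_\Omega(z,\mathbb{A}w) - \int_\Omega(z,\mathbb{A}u)$, and substituting the attainment equality from $d)$ for the $u$-term yields $c)$; $c) \Rightarrow b)$ is trivial, and $b) \Rightarrow a)$ follows because the right-hand side of $b)$ is at most $F(\mathbb{A}w) - F(\mathbb{A}u)$ by Definition \ref{definition F}. To extract $d)$ from $c)$, I would test with $w = u + \varphi$, $\varphi \in C_c^\infty(\Omega,\R^m)$: the Anzelotti pairing collapses by its defining formula to $-\int_\Omega\scalar{\varphi}{\div z}\dx$ and $c)$ reduces to $\int_\Omega\scalar{\varphi}{r + \div z}\dx = 0$, forcing $r = -\div z$. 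Next, testing with $w = u + \psi$, $\psi \in C^\infty(\overline{\Omega},\R^m)$, and invoking the Green's formula of Proposition \ref{normal trace} cancels the bulk term and leaves $\int_{\de\Omega}\scalar{\psi}{\scalar{z}{\nu}}\dx = 0$ for all $\psi$, forcing $\scalar{z}{\nu} = 0$; finally $w = u$ in $c)$ gives the attainment equality.

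The substantive direction is $a) \Rightarrow d)$. I would define the operator $T \subset L^2 \times L^2$ to consist of all pairs $(u,r)$ satisfying $d)$. By the easy implications $T \subset \de F$, and $T$ is monotone: for $(u,r),(u',r') \in T$ with associated $z,z'$, rewriting $\scalar{r-r'}{u-u'}_{L^2} = \int_\Omega(z-z',\mathbb{A}(u-u'))$ and splitting the pairing, the attainment equalities on the diagonal terms together with the sup-bound $\int(z,\mathbb{A}u') \leq F(\mathbb{A}u') + \int f^*(x,z)$ on the cross terms combine to give nonnegativity. By the maximality criterion in Lemma \ref{monotonicity}, it then suffices to show $R(I+T) = L^2$: for every $w \in L^2$ produce $(u,r) \in T$ with $u + r = w$. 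I would construct such a pair as primal-dual optimizers of the strictly convex problem
\begin{align}
\min_{u \in L^2(\Omega,\R^m)} \left\{ F(\mathbb{A}u) + \tfrac{1}{2}\norm{u-w}_{L^2}^2 \right\},
\end{align}
whose unique minimizer satisfies $w - u \in \de F(u)$; Fenchel--Rockafellar duality applied to the sup-representation of $F$ produces a dual problem over admissible $z$, whose primal-dual optimality relations encode $r = -\div z$, $\scalar{z}{\nu} = 0$, and the attainment equality simultaneously.

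The main technical obstacle will be establishing strong duality and attainment on the dual side. Admissible $z$ are only uniformly bounded in $L^\infty$, so weak$^*$ limits do not a priori preserve the $X_2$-regularity ($\div z \in L^2$) that is needed to make sense of the pairing, and neither $\int f^*(x,z)$ nor the pairing is obviously weak$^*$-semicontinuous at this level of generality. What should save the day is the quadratic penalty in the primal: at a dual optimizer $z$ one is forced to have $\div z = u - w \in L^2$, which is precisely the $L^2$-divergence bound under which Lemma \ref{cor:pairing} a) yields continuity of the pairing along weak$^*$ limits. Combined with convexity of $\int f^*(x,\cdot)$, this should deliver a dual maximizer, produce strong duality, and identify the vanishing duality gap with the attainment equality required in $d)$.
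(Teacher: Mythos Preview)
Your chain $d)\Rightarrow c)\Rightarrow b)\Rightarrow a)$ and the extraction $c)\Rightarrow d)$ are correct and essentially match the paper. The substantive step is different: for the range condition $R(I+T)=L^2$ the paper does \emph{not} argue by duality. Instead it regularises, replacing $f$ by $f_{q-1}^q$ for $q>1$, solves the resolvent equation for the smooth coercive functional $G_q(u)=\int_\Omega f_{q-1}^q(x,\mathbb{A}u)\dx$ on $W^{\mathbb{A},q}\cap L^2$, and passes to the limit $q\searrow 1$. The explicit Euler--Lagrange equation for $G_q$ furnishes a concrete $z_q=\mathrm{D}_y f_{q-1}^q(x,\mathbb{A}u_q)$, and uniform bounds let one extract weak limits $(u,z)$ that land directly in $T$. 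A key dividend is that the same sequence $u_q$ is a recovery sequence for the relaxation, so Theorem \ref{relax thm} falls out of the same computation; your duality route would not produce this.

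Your Fenchel--Rockafellar plan is a reasonable alternative, but the sketch is under-specified at exactly the point you flag. Writing ``at a dual optimizer $z$ one is forced to have $\div z=u-w\in L^2$'' presupposes both that a dual optimizer exists and that strong duality already holds (otherwise the primal--dual relations do not link $u$ and $z$). What is actually needed is the following two-step argument, neither of which you have written down: first, recognise that $F(\mathbb{A}\cdot)=H^*$ on $L^2$ for $H(r):=\inf\{\int_\Omega f^*(x,z)\dx: z\in\mathcal{Z},\ -\div z=r,\ \scalar{z}{\nu}=0\}$, and verify that $H$ is convex \emph{and lower semicontinuous} on $L^2$ (this is where the $L^\infty$ bound on $z$ and weak $L^2$-lsc of $\int f^*(x,\cdot)$ enter, via a maximising-sequence compactness argument); second, apply Fenchel duality to $H^*+\tfrac12\|\cdot-w\|^2$, using continuity of the quadratic term as the qualification condition, to get strong duality and attainment in $r$, and then use the same lsc/compactness argument once more to realise the optimal $r^*=u-w$ by an admissible $z^*$ achieving the infimum in $H$. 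The reference to Lemma \ref{cor:pairing} a) is a red herring here: the pairing $(z,\mathbb{A}u)$ does not appear in the dual functional, so its continuity is not what closes the argument.
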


If one assumes some regularity in the $x$-variable, one can also get a more pointwise characterisation of the subdifferential (recall that the density $\theta$ was defined in \eqref{def theta}). \begin{proposition}\label{f cont 1}
Assume \ref{assf} and \ref{assf2}. Then for all $u,r\in L^2(\Omega,\R^m)$ the following conditions are equivalent: \begin{itemize}
\item[a)] $r\in \de F_{dual}(u)$.
\item[b)] There is a $z\in \mathcal{Z}$ such that \begin{align}
&r=-\div z \quad \text{ in $\Omega$}\\
&\scalar{z}{\nu}=0\quad \text{ on $\de\Omega$}\\
&z\in \de_y f(x,(\mathbb{A} u)^a) \quad \text{$\mathcal{L}^n$-a.e.\ in $\Omega$}\\
& \theta(z,\mathbb{A}u)=f^\infty(x,\frac{(\mathbb{A}u)^s}{|\mathbb{A}u|^s}(x)) \quad \text{$|\mathbb{A}u|^s$-a.e.\ in $\Omega$}.
\end{align}
\end{itemize}
\end{proposition}

We postpone the proofs to Section \ref{proof section}.

By Proposition \ref{brezis}, we conclude the existence of the flow:

\begin{corollary}\phantomsection\label{ex flow}
Assume \ref{assf}. For every $u_0\in L^2(\Omega,\R^m)$, there is a unique solution $u\in C([0,\infty),L^2(\Omega,\R^m))\cap W_{loc}^{1,\infty}((0,\infty),L^2(\Omega,\R^m))$ to the problem \begin{align}
&u(0) =u_0,\\
&\de_tu(t)\in\div\left( \de_y f(x,\mathbb{A}u(t))\right)\quad\text{in $\Omega$}\\
&\scalar{\de_yf(x,\mathbb{A}u)}{\nu}=0\quad\text{on $\de\Omega$}
\end{align}
where the time derivative is taken in a weak sense, and the second and third conditions hold in the sense that the equivalent conditions of Thm. \ref{main thm 1} are fulfilled for $r=-\de_t u$. 
\end{corollary}

We remark that the $A^T$ in the divergence in \eqref{main eq} in the introduction disappears here because $A$ is symmetric and $\de_y f(x,\mathbb{A}u)$ already is in the image of $A=A^T$ by the convention \ref{bas ass}.

\section{The functional with Dirichlet boundary conditions}\label{section 6}
In this subsection, we will consider Dirichlet boundary conditions, which come with the additional complications described in the introduction.

%We will also make use of the fact that every function in $L^1(\de B)$ can be extended to a function in $W^{1,1}\cap L^2$ (which in particular lies in $\BVA\cap L^2$) and that this extension operator is bounded. A proof of this can be found in .

\begin{definition}\phantomsection\label{Def F bd}
Assume \ref{assf}. Let $\mathbb{A}$ be $\mathbb{C}$-elliptic and $u_1\in L^1(\de\Omega,\R^m)$. \begin{itemize} \item[a)] For $u\in L^2(\Omega,\R^m)$ we define \begin{align}
&F_{dual}^{u_1}(u):=\sup_{z\in \mathcal{Z}} \int_\Omega -f^*(x,z)-\scalar{u}{\div z}\dx+\int_{\de\Omega} \scalar{\scalar{z}{\nu}}{u_1}\dd\mathcal{H}^{n-1}.
\end{align}
\item[b)] For $u\in L^2(\Omega,\R^m)$ we define the relaxation \begin{align}
F^{u_1}_{relaxed}(u)=\inf_{u_l\rightarrow u \text{ in } L^2,\: u_l\in W_{u_1}^{\mathbb{A},1}(\Omega)}\liminf_{l\rightarrow \infty}\int_\Omega f(x,\mathbb{A}u_l)\dx.\label{def relax2}
\end{align}\end{itemize}\end{definition}

We make a couple of remarks:\begin{itemize}
\item If $u\in \BVA$, then we have by the Green's formula \eqref{Greens form} that \begin{align}
&F_{dual}^{u_1}(u)=\label{}\sup_{z\in \mathcal{Z}}-\int_\Omega f^*(x,z)\dx+\int_\Omega (z,\mathbb{A} u)+\int_{\de\Omega} \scalar{\scalar{z}{\nu}}{u_1-u}\dd\mathcal{H}^{n-1}.\label{pint def}
\end{align}

\item $F_{dual}^{u_1}$ is convex and $L^2$-lower semicontinuous as a supremum of affine and continuous functionals. Arguing as in the proof of Proposition \ref{basic properties F} with compactly supported $z$, one can easily show that $\mathcal{F}_{dual}^{u_1}(u)=+\infty$ whenever $u\notin \BVA$.

\item It follows from the fact that every $L^1(\de \Omega,\R^m)$ is the trace of a $W^{1,1}\cap L^2$ function (see e.g.\ \cite[Thm.\ 1.4]{MR3525400}) that the infimum in b) is taken over a non-empty set.

\item It follows from Mazur's Lemma that one obtains the same functional if one merely requires weak convergence in $L^2$ in the definition of $F_{relaxed}^{u_1}$
\end{itemize}

We have essentially the same theorems in this case:

\begin{theorem}\label{T relax 2}
Assume \ref{assf}, that $u_1\in L^1(\mathbb{C},\R^n)$ and that $\mathbb{A}$ is $\mathbb{C}$-elliptic. Then \begin{align}
F_{dual}^{u_1}(u)=F_{relaxed}^{u_1}(u).
\end{align}
for all $u\in L^2(\Omega,\R^n)\cap \BVA(\Omega)$. 

Additionally, if $u_1$ can be extended to a function in $W_{u_1}^{\mathbb{A},p}(\Omega)\cap L^2(\Omega,\R^m)$ for some $p>1$, then for each $u\in L^2(\Omega,\R^n)\cap \BVA(\Omega)$ there exists a sequence $u_l\in W^{\mathbb{A},\min(p,1+\frac{1}{l})}(\Omega)\cap L^2(\Omega,\R^m)$ attaining the infimum in \eqref{def relax2}.
\end{theorem}
The proof can be found in Section \ref{proof section}.

\begin{proposition}\label{int rep cont}
Assume \ref{assf} and \ref{assf2}, that $u_1\in L^1(\de\Omega,\R^n)$ and that $\mathbb{A}$ is $\mathbb{C}$-elliptic. Then \begin{align}
&F_{dual}^{u_1}(u)=F_{relaxed}^{u_1}(u)\\
&=\int_\Omega f(x,(\mathbb{A}u)^a)\dx+\int_\Omega f^\infty(x,\frac{(\mathbb{A}u)^s}{|\mathbb{A}u|^s}(x))\dd |\mathbb{A}u|^s(x)+\int_{\de\Omega} f^\infty(x,(u_1-u)(x)\otimes \nu)\dd\mathcal{H}^{n-1}(x)
\end{align}
for all $u\in L^2(\Omega,\R^n)\cap \BVA(\Omega)$. In particular, if $u\in W_{u_1}^{\mathbb{A},1}(\Omega)$, then $F_{relaxed}(u)=F_{dual}(u)=\int_\Omega f(x,\mathbb{A}u(x))\dx$.
\end{proposition}
\begin{proof}
This can be proven almost exactly in the full gradient case, see e.g.\ \cite[Section 5]{Diening} for a sketch without the restriction to $L^2$-convergence. While not explicitly stated there, the proof there can easily be adapted to give $L^2$-convergence as well, the only somewhat non-obvious point is that $u_1\in L^1(\de\Omega,\R^m)$ can be extended to an $W^{1,1}\cap L^2$-function, which is true by e.g.\ \cite[Thm.\ 1.4]{MR3525400}. See e.g.\ \cite[Proposition 3.1]{meyer2025attainment} for some more details in the full gradient case. 
\end{proof}

\begin{theorem}Assume \ref{assf}, that $u_1\in L^1(\de\Omega,\R^n)$ and that $\mathbb{A}$ is $\mathbb{C}$-elliptic. Let $u,r\in L^2$, then the following are equivalent:\label{main thm 2}\itemize
\item[a)]$r\in \de F^{u_1}_{dual}(u)$

\item[b)] There is a $z\in \mathcal{Z}$ with $r=-\div z$ such that for all $w\in \BVA(\Omega)\cap L^2(\Omega,\R^m)$ we have the inequality \begin{align}
\int_\Omega \scalar{w-u}{r}\dx\leq&-\int_\Omega f^*(x,z)\dx+\int_\Omega (z,\mathbb{A}w)\label{var ineq}\\
&-\int_{\de\Omega}\scalar{\scalar{z}{\nu}}{w-u_1}\dd\mathcal{H}^{n-1}-F_{dual}^{u_1}(u).\nonumber
\end{align}

\item[c)] There is a $z\in \mathcal{Z}$ with $r=-\div z$ such that for all $w\in \BVA(\Omega)\cap L^2(\Omega,\R^m)$ we have the equality \begin{align}
\int_\Omega \scalar{w-u}{r}\dx=&-\int_\Omega f^*(x,z)\dx+\int_\Omega (z,\mathbb{A}w)\\
&-\int_{\de\Omega}\scalar{\scalar{z}{\nu}}{w-u_1}\dd\mathcal{H}^{n-1}-F_{dual}^{u_1}(u).\nonumber
\end{align}

\item[d)] There is a $z\in \mathcal{Z}$ with $r=-\div z$ and \begin{align}
F_{dual}^{u_1}(u)=\int_\Omega -f^*(x,z)\dx+\int_\Omega(z,\mathbb{A}u)-\int_{\de \Omega}\scalar{\scalar{z}{\nu}}{u-u_1} \dx.
\end{align}

\end{theorem}

\begin{proposition}\label{f cont 2}
Assume \ref{assf} and \ref{assf2}, let $u_1\in L^1(\de\Omega,\R^m)$ and assume that $\mathbb{A}$ is $\mathbb{C}$-elliptic. Then for all $u,r\in L^2(\Omega,\R^m)$ the following conditions are equivalent: \begin{itemize}
\item[a)] $r\in \de F_{dual}^{u_1}(u)$.
\item[b)] There is a $z\in \mathcal{Z}$ such that \begin{align}
r&=-\div z \quad \text{ in $\Omega$}\\
\scalar{z}{\nu}&\in \de_y f^\infty(x,(u_1-u)\otimes \nu)\quad \text{$\mathcal{H}^{n-1}$-a.e. on $\de\Omega$}\\
z&\in \de_y f(x,(\mathbb{A} u)^a(x)) \quad \text{$\mathcal{L}^n$-a.e.\ in $\Omega$}\\
 \theta(z,\mathbb{A}u)&=f^\infty(x,\frac{(\mathbb{A}u)^s}{|\mathbb{A}u|^s}(x)) \quad \text{$|\mathbb{A}u|^s$-a.e.\ in $\Omega$}.
\end{align}
\end{itemize}
\end{proposition}

From Proposition \ref{brezis}, we conclude the existence of the flow:

\begin{corollary}\label{ex flow 2}
For every $u_0\in L^2(\Omega,\R^m), u_1\in L^1(\de\Omega,\R^m)$, there is a unique solution $u\in C([0,\infty),L^2(\Omega,\R^m))\cap W_{loc}^{1,\infty}((0,\infty),L^2(\Omega,\R^m))$ to the problem \begin{align}
&u(0) =u_0,\\
&\de_tu(t)\in\div (\de_y f(x,\mathbb{A}u(t)))\text{ a.e.,}\\
&\scalar{\de_y f^\infty(x,\mathbb{A}u)}{\nu}\in \de_y f^\infty(x,(u_1-u)\otimes \nu)\text{ on $\de\Omega$},
\end{align}
where the time derivative is taken in a weak sense and the second and third conditions hold in the sense that the equivalent conditions of Thm. \ref{main thm 2} are fulfilled for $r=-\de_t u$.
\end{corollary}

\section{Approximation by $q$-Laplace like flows}\label{section 7}
It is a natural question to ask whether the solutions that we constructed can also be obtained as the limit of the flows for $\int_\Omega f^q(x,\mathbb{A}u(x))\dx$ for $q\searrow 1$, which do not require dealing with measures. The answer to this question is yes if the boundary data can be attained by $W^{\mathbb{A},q}\cap L^2$ functions (otherwise it is obviously impossible).

Let us consider $q>1$ and the solutions to the problems \begin{equation}\begin{aligned}
u_q(0)&=u_0\\
-\de_t u&\in \de F_q(u)\label{defq}
\end{aligned}\end{equation}
with \begin{align}
F_q(u)=\begin{cases}
\int_\Omega f^q(x,\mathbb{A}u(x))\dx & \text{ if $u\in L^2(\Omega,\R^m)\cap W^{\mathbb{A},q}(\Omega)$}\\
+\infty & \text{ if $u\in L^2(\Omega,\R^m)\backslash  W^{\mathbb{A},q}(\Omega)$}
\end{cases}
\end{align}
which exists by Proposition \ref{brezis} as one easily checks that $F_q$ is convex and lower semicontinuous. 

Similarly, if $\mathbb{A}$ is $\mathbb{C}$-elliptic and $u_1\in L^1(\de\Omega,\R^m)$ can be extended to a function in $W^{\mathbb{A},q_0}(\Omega)\cap L^2(\Omega,\R^m)$ for some $q_0>1$, we consider for $q\in (1,q_0]$ the solutions to the problems \begin{equation}\begin{aligned}
u_q(0)&=u_0\\
-\de_t u&\in \de F_q^{u_1}(u)\label{defqu1}
\end{aligned}\end{equation}
with \begin{align}
F_q^{u_1}(u)=\begin{cases}
\int_\Omega f^q(x,\mathbb{A}u(x))\dx & \text{ if $u\in L^2(\Omega,\R^m)\cap W_{u_1}^{\mathbb{A},q}(\Omega)$}\\
+\infty & \text{ if $u\in L^2(\Omega,\R^m)\backslash  W_{u_1}^{\mathbb{A},q}(\Omega)$}
\end{cases}
\end{align}
which again exist by Proposition \ref{brezis}.

Arguing for instance as in the proof of Lemma \ref{subdiff reg} below, it is routine to check that if $f$ is sufficiently regular, then the subdifferential $\de F_q$ (resp.\ $\de F_q^{u_1}$) is given by $-\div\mathrm{D}_y (f^q(x,\mathbb{A}u(x)))$, together with a Neumann-type boundary condition $\scalar{\mathrm{D}_y (f^q(x,\mathbb{A}u(x)))}{\nu}=0$ on $\de\Omega$ (resp.\ a Dirichlet boundary condition $u|_{\de\Omega}=u_1$).

\begin{proposition}\phantomsection\label{p approx}
Assume \ref{assf} and  that $u_0\in  L^2(\Omega,\R^m)$. \begin{itemize}
\item[a)] The solutions to \eqref{defq} converge locally uniformly in time in $C([0,\infty),L^2(\Omega,\R^m))$ to the solution of $\de_tu\in -\de F_{dual}$ with initial datum $u_0$ (as constructed in Corollary \ref{ex flow}) for $q\searrow 1$.

\item[b)] Assume additionally that $\mathbb{A}$ is $\mathbb{C}$-elliptic and that $u_1\in L^1(\de\Omega,\R^m)$ extends to some function in $W^{\mathbb{A},q_0}(\Omega)\cap L^2(\Omega,\R^m)$ with $q_0>1$. Then the solutions to \eqref{defqu1} converge locally uniformly in time in $C([0,\infty),L^2(\Omega,\R^m))$ to the solution of $\de_tu\in -\de F^{u_1}_{dual}$ with initial datum $u_0$ (as constructed in Corollary \ref{ex flow 2}) for $q\searrow 1$.
\end{itemize}
\end{proposition}

\begin{proof}
It is enough to show that the functionals $F_q$ (resp.\ $F_q^{u_1}$) Mosco-converge to to $F_{dual}$ resp.\ $F_{dual}^{u_1}$. Indeed, it is known \cite[Thm.\ 3.66 and 3.74]{Attouch} that Mosco-convergence of functionals implies (local in time) strong convergence of the corresponding gradient flows.

Let us first check the lower bound inequality, i.e.\ that for all $u_q\xrightharpoonup{L^2} u$ it holds that $\liminf_{q\searrow 1} F_q(u_q)\geq F_{dual}$ (resp.\ $\liminf_{q\searrow 1} F_q^{u_1}(u_q)\geq F^{u_1}_{dual}$). This is only non-trivial if $u_q\in W^{\mathbb{A},q}$ (resp.\ $W_{u_1}^{\mathbb{A},q}$). We then have that \begin{align*}\liminf_{q\searrow 1} \int_\Omega f^q(x,\mathbb{A}u_q)\dx\geq \liminf_{q\searrow 1}\int_\Omega qf(x,\mathbb{A}u_q)-(q-1)\dx\geq F_{relaxed}(u)=F_{dual}(u),\end{align*}
where we have used Theorem \ref{T relax 1} and that the relaxation does not change when using weak $L^2$-convergence. The case with boundary conditions works with the same argument using Theorem \ref{T relax 2}.

We move on to check the existence of recovery sequences, i.e.\ that for every $u\in L^2(\Omega,\R^m)$, there is a sequence of $u_q\xrightarrow{L^2} u$ such that $F_q(u_q)\rightarrow F_{dual}(u)$ (resp.\ $F_q^{u_1}(u_q)\rightarrow F^{u_1}_{dual}(u)$). This is trivial if $u\notin \BVA$. If on the other hand $u\in \BVA$, then for every $\eps>0$ there is an $u_\eps\in W^{\mathbb{A},p_\eps}(\Omega)$ for some $p_\eps>1$, which fulfills $\norm{u-u_\eps}_{L^2}<\eps$ and \begin{align*}\left|F_{dual}(u)-\int_\Omega f(x,\mathbb{A}u_\eps)\dx\right|<\eps\end{align*} by the last part of Theorem \ref{T relax 1}.

As $|\mathbb{A} u_\eps|^{p_\eps}$ is integrable, it fulfils $f^q(x,\mathbb{A} u_\eps)\rightarrow f(x,\mathbb{A}u_\eps)$ in $L^1$ as $q\searrow 1$ by e.g.\ dominated convergence, yielding a recovery sequence by taking a diagonal sequence.
For the case with Dirichlet boundary conditions, this follows analogously by taking $u_\eps$ with $u_\eps|_{\de\Omega}=u_1$, which is possible by the last part of Theorem \ref{T relax 2}.
\end{proof}

\section{Proof of the main theorems}\label{proof section}

\subsubsection{Overview and general strategy}
In this section, we will prove the Theorems \ref{main thm 1}, \ref{main thm 2}, \ref{T relax 1} and \ref{T relax 2}.

The main difficulty in the proof will be to show that in the Theorems \ref{main thm 1} and \ref{main thm 2} a) implies the variational inequality in b). To do so, we will show that the operator $\widetilde{\de F_{dual}}$ defined through the condition b) is a subset of the subdifferential and that it fulfills the range condition $\ran(I+\widetilde{\de F_{dual}})=L^2$, which implies by Lemma \ref{monotonicity} that it must already be the entire subdifferential. To do so, we consider a regularised version of the functional, for which we can directly compute the Euler-Lagrange equation and show that we obtain elements of $\widetilde{\de F_{dual}}$ as certain limits. The sequence of functions that we obtain this way will also be an optimal sequence for the relaxation.

To regularise the functional, we shall consider $f_{\lambda}^q$ for $q\searrow 1$ (so that we do not have to work with measures) and $\lambda\searrow 0$, where $f_\lambda$ is the Moreau-Yosida approximation (defined below), so that the function becomes differentiable.

The proofs with and without boundary terms are quite similar to each other, so we focus on the case with boundary terms and comment on the changes that need to be made for the other case.

\subsection{Auxillary lemmata}
We shall first collect some easy, yet useful statements:

Let us first of note that whenever it holds $|z(x)|> C_0$ for the constant $C_0$ from \ref{assf}, then it holds that $f^*(x,z(x))=+\infty$, as one can directly see from using the sequence $tz(x)$ in the definition of the Legendre transform, hence for all $z\in \mathcal{Z}$, we have \begin{align}
\norm{z}_{L^\infty(\Omega,\R^{m\times n})}\leq C_0.\label{bound z}
\end{align}

\begin{lemma}\phantomsection\label{basic properties F} Assume \ref{assf}. Then the functionals have the following properties:\begin{itemize}
\item[a)]$F_{dual}$ is convex and lower semicontinuous w.r.t.\ $L^{2}$-convergence.

\item[b)] If $u\in W^{\mathbb{A},1}(\Omega)\cap L^2(\Omega,\R^m)$, then \begin{align}
F_{dual}(u)\leq \int_\Omega f(x,\mathbb{A}u)\dx,\label{W1 ineq}
\end{align}
in particular $F_{dual}(u)\leq F_{relaxed}(u)$ for all $u\in L^2(\Omega,\R^m)$.

\item[c)] If $u_1\in L^1(\de\Omega,\R^m)$ and $\mathbb{A}$ is $\mathbb{C}$-elliptic, then for all $u\in W_{u_1}^{\mathbb{A},1}(\Omega)\cap L^2(\Omega,\R^m)$, it holds that \begin{align}
F_{dual}^{u_1}(u)\leq \int_\Omega f(x,\mathbb{A}u)\dx,
\end{align}
in particular $F_{dual}^{u_1}(u)\leq F_{relaxed}^{u_1}(u)$ for all $u\in L^2(\Omega,\R^m)$.

\item[d)] We have \begin{align}c_0\norm{\mathbb{A}u}\leq F_{dual}( u)\leq C_0(|\Omega|+\norm{\mathbb{A}u})\label{bound F}\end{align} (with the same constants as in the growth condition \ref{assf}).

\item[e)] If $u_1,u_1'\in L^1(\de\Omega,\R^m)$ are two given boundary data and $\mathbb{A}$ is $\mathbb{C}$-elliptic, then for any $u\in L^2(\Omega,\R^m)$ it holds that \begin{align}
\left|F_{dual}^{u_1}(u)-F_{dual}^{u_1'}(u)\right|\lesssim \norm{u_1-u_1'}_{L^1(\de\Omega,\R^m)}
\end{align} 
(with the convention $\infty-\infty=0$).
 \end{itemize}\end{lemma}\begin{proof}
\textbf{a)} As a supremum of affine functionals, this is automatically convex. 

For lower semicontinuity, we take a sequence $u_l\rightarrow u$. The statement is only nontrivial if $u_l\in \BVA$. First, we consider the case when $u\in \BVA$.

 We only need to take the supremum in the definition of $F$ over compactly supported $z$ since for all  $z\in \mathcal{Z}$ and $\phi \in C_c^\infty(\Omega)$ with $\phi_l\nearrow 1$ it holds that \begin{align}
&\int_\Omega -f^*(x,\phi_l z)\dx+\int_\Omega (\phi_l z,\mathbb{A} u)\rightarrow\int_\Omega -f^*(x,z)\dx+\int_\Omega (z,\mathbb{A} u)
\end{align} by dominated convergence. By the definition of the pairing \begin{align}\int_\Omega (\phi_l z,\mathbb{A}u)=-\int_\Omega \scalar{\div \phi_l z}{u}\dx,\end{align} which is $L^2$-continuous in $u$. Hence, the functional is lower semicontinuous w.r.t.\ $L^{2}$-convergence of $u$ as a supremum of continuous functionals.

The other case where $u\notin \BVA$ follows from the bounds in d) and the lower semicontinuity of the total $\mathbb{A}$-variation.

\textbf{b)} This follows trivially from the pointwise inequality \begin{align}
-f^*(x,z(x))+z(x):\mathbb{A}u(x)\leq f(x,\mathbb{A}u(x)),
\end{align}
and the fact that $(z,\mathbb{A}u)$ is just the pointwise product in this case, as established in Example \ref{pairingtrivial}. The inequality between $F_{relaxed}$ and $F_{dual}$ follows by applying \eqref{W1 ineq} to any sequence attaining the infimum in the definition \eqref{def relax} of $F_{relaxed}$ and using the lower semicontinuity established in part a)

\textbf{c)} This follows in exactly the same way as b) by using the formula \eqref{pint def}.

\textbf{d)} We have \begin{align}
&\int_\Omega -f^*(x,z)\dx+\int_\Omega (z,\mathbb{A} u)=\int_\Omega -f^*(x,z)\dx+\int_\Omega z:(\mathbb{A} u)^a\dx+\int_\Omega (z,\mathbb{A} u^s),\label{middle step}\end{align}
where the pairing with the singular part was defined in Proposition \ref{density}. We may estimate the first group of summands against $\int_\Omega f(x,(\mathbb{A}u)^a)\dx$ as in b). The inequality \eqref{bound z} and Proposition \ref{anz pairing} imply that \begin{align}
\int_\Omega (z,\mathbb{A} u^s)\leq C_0\norm{\mathbb{A}u^s}.
\end{align}
This implies that we can estimate \eqref{middle step} as \begin{align}
\leq \int_\Omega f(x,(\mathbb{A}u)^a)\dx+C_0\norm{(\mathbb{A}u)^s}&\leq C_0\left(|\Omega|+\norm{(\mathbb{A}u)^a}\right)+C_0\norm{(\mathbb{A}u)^s}\\
&\leq C_0\left(|\Omega|+\norm{\mathbb{A}u}\right)
\end{align}
where we used the upper bound in \ref{assf} on $f$.

For the lower bound we note that by the bounds on $f$, any $z\in C_c^1(\Omega,\R^{m\times n})$ with $Az=z$ and $\norm{z}_{L^\infty}\leq c_0$ is in $\mathcal{Z}$ and for such $z$ we have $-f^*(x,z(x))\geq 0$ as a direct calculation shows. Hence, making use of Example \ref{pairingtrivial}, we get that \begin{align}
F_{dual}(u)\geq \sup_{z\in C_c^1(\Omega,\R^{m\times n}),Az=z, \norm{z}_{L^\infty}\leq c_0} \int_\Omega z\,\text{d}\mathbb{A} u=c_0\norm{\mathbb{A} u}.
\end{align}

\textbf{e)} Let $\eps>0$ and $z\in \mathcal{Z}$ be such that \begin{align}
F_{dual}^{u_1}(u)\leq \int_\Omega -f^*(x,z)-\scalar{u}{\div z}\dx+\int_{\de\Omega} \scalar{\scalar{z}{\nu}}{u_1}\dd\mathcal{H}^{n-1}+\eps.
\end{align}
Then by \eqref{bound z}, we can estimate  \begin{align}
F_{dual}^{u_1}(u)&\leq \int_\Omega -f^*(x,z)-\scalar{u}{\div z}\dx+\int_{\de\Omega} \scalar{\scalar{z}{\nu}}{u_1'}\dd\mathcal{H}^{n-1}+C_0\norm{u_1-u_1'}_{L^1(\de\Omega,\R^m)}+\eps\\
&\leq F_{dual}^{u_1'}(u)+C_0\norm{u_1-u_1'}_{L^1(\de\Omega,\R^m)}+\eps,
\end{align}
which shows the statement since $\eps>0$ is arbitrary, and the opposite inequality can be shown in the same way. 
\end{proof}

For a convex function $\phi:\R^d\rightarrow \R$ and $\lambda>0$ the Moreau-Yosida approximation is defined as \begin{align}
\phi_{\lambda}(x)=\inf_{y\in \R^d}\phi(y)+\frac{1}{2\lambda}|x-y|^2.
\end{align}
One can show that this is a continuously differentiable and convex function and that it holds that $\phi_\lambda\leq \phi$ everywhere (see e.g.\ \cite[Section 12.4]{Bauschke}).

We always take the Moreau-Yosida regularisation of $f$ in the $y$-variable only. It is easy to see that $f_\lambda(x,y)$ only depends on $x,Ay$ because the same is true for $f$. Furthermore, $f_\lambda$ is non-negative because $f$ is.

\begin{lemma}\label{main lemma} Assume \ref{assf}, then $f$ and $f_\lambda$ have the following properties:\begin{itemize}
\item[a)] $f$ is Lipschitz in the second variable with the same constant $C_0$ as in the growth bound \ref{assf}. The same holds for the Moreau-Yosida regularisation.
\item[b)] There is a constant $C$ (depending on $f$ but not on the other variables) such that for every $\lambda>0$ and $x,y$  we have  \begin{align}f_\lambda(x,y)\geq f(x,y)-C\lambda .\end{align} 
\end{itemize}\end{lemma}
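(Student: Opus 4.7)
My plan is to deduce both claims directly from the growth bound in Assumption \ref{assf} together with convexity, via two short arguments.

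For (a), I will first establish that $y \mapsto f(x,y)$ is $C_0$-Lipschitz, uniformly in $x$, by the standard convex-analytic extrapolation trick: given $y_1 \neq y_2$ and $R>0$, set $y_3 := y_2 + R(y_2-y_1)/|y_2-y_1|$, so that $y_2 = (1-t)y_1 + t y_3$ with $t = |y_2-y_1|/(|y_2-y_1|+R)$. Convexity then gives $f(x,y_2) - f(x,y_1) \leq t\bigl(f(x,y_3) - f(x,y_1)\bigr) \leq t C_0\bigl(1+|Ay_3|\bigr)$; using $|Ay_3| \leq |Ay_2| + R$ (which holds since $A$ is an orthogonal projection, hence $|A|\leq 1$) and letting $R\to\infty$, I obtain $f(x,y_2) - f(x,y_1) \leq C_0 |y_2-y_1|$. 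Swapping $y_1$ and $y_2$ yields $C_0$-Lipschitzness. The Moreau--Yosida regularization then inherits the same Lipschitz constant from its infimal-convolution definition: given a near-optimal $z^\ast$ in the formula for $f_\lambda(x,y_1)$, the competitor $z^\ast + (y_2-y_1)$ in the infimum defining $f_\lambda(x,y_2)$ yields $f_\lambda(x,y_2) \leq f(x, z^\ast) + C_0|y_2-y_1| + \tfrac{1}{2\lambda}|y_1-z^\ast|^2$, and passing to the infimum in $z^\ast$ gives $f_\lambda(x,y_2) \leq f_\lambda(x,y_1) + C_0|y_2-y_1|$.

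For (b), I would use the Lipschitz bound from (a). Starting from $f_\lambda(x,y) = \inf_{z} \bigl[f(x,z) + \tfrac{1}{2\lambda}|y-z|^2\bigr]$ and substituting $f(x,z) \geq f(x,y) - C_0 |y-z|$, the estimate reduces to a one-dimensional minimization:
\[
 f_\lambda(x,y) \geq f(x,y) + \inf_{r\geq 0}\Bigl[\tfrac{1}{2\lambda} r^2 - C_0 r\Bigr] = f(x,y) - \tfrac{1}{2} C_0^2\, \lambda,
\]
so the constant $C := C_0^2/2$ works.

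Neither step is genuinely delicate; the only point that requires mild care is keeping track of the Lipschitz constant in (a), namely confirming that it equals the growth constant $C_0$ rather than some larger multiple, which is where the fact that $f$ depends only on $Ay$ (noted after Assumption \ref{assf}) together with $|A|\leq 1$ enter. I therefore expect the entire lemma to take only a handful of lines in the final paper.
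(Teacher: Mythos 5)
Your proposal is correct and follows essentially the same route as the paper: part (a) is the monotonicity-of-difference-quotients/extrapolation argument from the growth bound, and part (b) is exactly the Lipschitz-plus-Young's-inequality estimate $C_0|y-z|\leq \tfrac{1}{2\lambda}|y-z|^2+\tfrac{C_0^2\lambda}{2}$ followed by taking the infimum. The only cosmetic difference is that for the Lipschitzness of $f_\lambda$ the paper reuses the growth bound (since $f_\lambda\leq f$), whereas you shift the competitor in the inf-convolution; both are standard and valid.
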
\begin{proof}
\textbf{a)} For $f$ this follows directly from the upper growth bound in \ref{assf} and the monotonicity of the difference quotients. As the upper growth bound also holds for $f_\lambda$ the Lipschitzness also holds for $f_\lambda$ with the same constant.\smallskip

\textbf{b)} Let $y,y'$ be given, then by $f$ being Lipschitz in the second variable we may estimate \begin{align}
f(x,y)-f(x,y')\leq C|y-y'|\leq \frac{1}{2\lambda}|y-y'|^2+C\lambda,
\end{align}
where we applied Young's inequality. After rearranging and taking the minimum over all $y'$ this gives the statement.\end{proof}

\subsection{Main Proof}

For the boundary conditions, we certainly can not use every $u_1\in L^1$ for the regularisation approach, as traces of $W^{\mathbb{A},q}$-functions will in general have some fractional regularity. We shall therefore first prove the Theorem \ref{main thm 2} under the following extra assumption, we will later explain in \ref{general u1} how to generalise this to boundary values in $L^1$. For the proof of Theorem \ref{main thm 1}, this assumption is not necessary.

\begin{temporaryassumption} \label{temp ass} We first assume that $u_1$ can be extended to an $W^{\mathbb{A},s}(\Omega)\cap L^2(\Omega,\R^m)$-function for some $s>1$.
\end{temporaryassumption}
 
We will also always assume in all the subsequent Lemmata that the assumptions of the theorems (i.e.\ \ref{assf} and that $\mathbb{A}$ is $\mathbb{C}$-elliptic in the Dirichlet case) hold without specifically mentioning it anymore.

\subsubsection{Lemmata about the regularisation}

Let $q>1$, then for $u\in W_{u_1}^{\mathbb{A},q}(\Omega)\cap L^2$ (resp.\ $u\in W^{\mathbb{A},q}(\Omega)\cap L^2$) in the case without boundary conditions) we define the functional \begin{align}
G_q(u):=\int_\Omega f_{q-1}^q(x,\mathbb{A}u(x))\dx.
\end{align}
Here $f_{q-1}^q$ is always understood as $(f_{q-1})^q$. We extend this functional to $L^2(\Omega,\R^m)$ as $+\infty$.

\begin{lemma}\phantomsection\label{subdiff reg}

\item[a)] The functional $G_q$ is convex and lower semicontinuous on $L^{2}(\Omega,\R^m)$.
\item[b)] Any $r\in \de G_q(u)$ fulfills the inequality \begin{equation}\begin{aligned}\int_\Omega \scalar{w-u}{r}\dx\leq &\int_\Omega -\left(f_{q-1}^q\right)^*\left(x,\mathrm{D}_yf_{q-1}^q(x,\mathbb{A} u)\right)\dx\label{reg inequality}+\int_\Omega \mathrm{D}_yf_{q-1}^q(x,\mathbb{A} u):\mathbb{A}w\dx\\
&-\int_\Omega f_{q-1}^q(x,\mathbb{A} u)\dx\end{aligned}\end{equation}
for all $w\in W_{u_1}^{\mathbb{A},q}(\Omega)\,\cap\, L^{2}(\Omega,\R^m)$ resp.\ for all $w\in W^{\mathbb{A},q}(\Omega)\,\cap\, L^{2}(\Omega,\R^m)$.
\end{lemma}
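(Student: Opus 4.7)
For part (a), the convexity of $G_q$ is immediate from writing $f_{q-1}^q$ as the composition of the convex function $f_{q-1}(x,\cdot) \geq 0$ with the convex nondecreasing map $t \mapsto t^q$ on $[0,\infty)$, and noting that the pointwise integral of a convex integrand is convex. For lower semicontinuity on $L^2$ I would argue as follows: if $u_l \to u$ in $L^2$ with $\liminf G_q(u_l) < \infty$, pass to a subsequence realizing the liminf. From Lemma \ref{main lemma} b) we have $f_{q-1}(x,y) \geq c_0|Ay| - C(q-1)$, so that $f_{q-1}^q(x,y) \geq c\,|Ay|^q - C'$ with constants depending only on $q$ and $f$; this bounds $\|\mathbb{A}u_l\|_{L^q}$, so along a further subsequence $\mathbb{A}u_l \rightharpoonup \sigma$ in $L^q$, and since $\mathbb{A}$ is distributionally continuous, $\sigma = \mathbb{A}u$, whence $u \in W^{\mathbb{A},q}\cap L^2$. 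In the Dirichlet case, $u_l - w_1 \in W^{\mathbb{A},q}_0$ (with $w_1$ a fixed $W^{\mathbb{A},s}\cap L^2$-extension of $u_1$) is bounded in $W^{\mathbb{A},q}$, and $W^{\mathbb{A},q}_0$ is weakly closed by $\mathbb{C}$-ellipticity (Thm. \ref{trace}), giving $u\in W^{\mathbb{A},q}_{u_1}$. Finally, the convex normal integrand $(x,y)\mapsto f_{q-1}^q(x,y)$ — continuous in $y$ because Moreau–Yosida regularizations are continuous — yields $\int f_{q-1}^q(x,\mathbb{A}u)\dx \leq \liminf \int f_{q-1}^q(x,\mathbb{A}u_l)\dx$ by Ioffe's theorem on lower semicontinuity of convex integral functionals under weak $L^q$ convergence.

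For part (b) the key observation is that $f_{q-1}^q$ is continuously differentiable in $y$ with derivative $z_u(x) := \mathrm{D}_y f_{q-1}^q(x,\mathbb{A}u(x))$ of growth $|z_u| \lesssim 1+|\mathbb{A}u|^{q-1}$, so $z_u \in L^{q'}(\Omega)$. Given $r \in \de G_q(u)$ and $w\in W^{\mathbb{A},q}_{u_1}\cap L^2$ (respectively $W^{\mathbb{A},q}\cap L^2$), the convex combination $u+t(w-u) = (1-t)u+tw$ lies in the same affine space for $t\in[0,1]$. The subdifferential inequality then gives
\begin{align}
t\int_\Omega \scalar{w-u}{r}\dx \leq G_q(u+t(w-u)) - G_q(u).
\end{align}
Dividing by $t$ and letting $t\searrow 0$, a dominated convergence argument (with majorant derived from the growth of $z_u$ and the convexity of difference quotients) yields
\begin{align}
\int_\Omega \scalar{w-u}{r}\dx \leq \int_\Omega z_u : \mathbb{A}(w-u) \dx.
\end{align}

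It remains to rewrite the right-hand side in the form stated in the lemma. Since $z_u = \mathrm{D}_y f_{q-1}^q(x,\mathbb{A}u) \in \de_y f_{q-1}^q(x,\mathbb{A}u)$ pointwise a.e., the Fenchel equality gives $(f_{q-1}^q)^*(x,z_u) = z_u:\mathbb{A}u - f_{q-1}^q(x,\mathbb{A}u)$. Substituting this into the right-hand side of the claimed inequality \eqref{reg inequality} produces exactly $\int_\Omega z_u:\mathbb{A}(w-u)\dx$, matching what was obtained above. The main obstacles are the lower semicontinuity step (where one must carefully absorb the boundary condition into the weak $L^q$ limit and invoke a normal-integrand l.s.c.\ theorem) and justifying the interchange of limit and integral in the directional-derivative calculation; both are standard once the lower bound from Lemma \ref{main lemma} b) and the polynomial growth of $\mathrm{D}_y f_{q-1}^q$ are in hand.
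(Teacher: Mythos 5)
Your proposal is correct and follows essentially the same route as the paper: part (a) via the coercivity bound $f_{q-1}^q(x,y)\gtrsim |Ay|^q - C$ from Lemma \ref{main lemma} plus a Mazur-type argument to keep the weak limit in $W^{\mathbb{A},q}_{u_1}$, and part (b) by differentiating the subdifferential inequality along the segment towards $w$ (the paper phrases this with increments $hw$, $w\in W^{\mathbb{A},q}_0$, which is the same computation) and then rewriting via the Fenchel equality $(f_{q-1}^q)^*(x,\mathrm{D}_yf_{q-1}^q(x,\mathbb{A}u))=\mathrm{D}_yf_{q-1}^q(x,\mathbb{A}u):\mathbb{A}u-f_{q-1}^q(x,\mathbb{A}u)$. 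No gaps.
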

\begin{proof}
\textbf{a)} Convexity is clear from the definition. Lower semicontinuity follows from standard arguments as long as the limit lies in $W_{u_1}^{\mathbb{A},q}$ (resp.\ $W^{\mathbb{A},q}$). For the case where the limit does not lie in there, we need to show coercivity.

It follows from the growth bound in \ref{assf} and Lemma \ref{main lemma} b) above that there are positive $c,c'$ not depending on $q,x,y$ such that \begin{align}
c|Ay|-c'(q-1)\leq f_{q-1}(x,y),
\end{align}%
this implies that there is a $C>0$ (depending on $f$ but not on $q$) such that for all small enough $q>1$ we have \begin{align}
\frac{1}{C}|Ay|^q-C\leq f_{q-1}^q(x,y).
\end{align}%
Using Hölder and the upper growth bound on $f$ we get that for some $C$, not depending on $q$, that for small enough $q$ it holds that\begin{align}
\frac{1}{C}\norm{\mathbb{A}u}_{L^q}^q-C\leq G_q(u)\leq C(1+\norm{\mathbb{A}u}_{L^q}^q).\label{bound g}
\end{align}
This lower bound implies that whenever we have a sequence $u_l\in W^{\mathbb{A},q}\cap L^2$ which converges to some $u\notin W^{\mathbb{A},q}$, then we must have $G_q(u_l)\rightarrow +\infty$, since if $\norm{\mathbb{A}u_l}_{L^q}$ were uniformly bounded, then by the lower semicontinuity of the $L^q$-norm we would also have $\norm{\mathbb{A}u}_{L^q}<\infty$.

Furthermore, we see by Mazur's Lemma and the norm-continuity of the trace that if we have a sequence of $u_l\in W_{u_1}^{\mathbb{A},q}\cap L^2$, for which $\norm{\mathbb{A}u_l}_{L^q}$ is bounded, converging in $L^2$ to $u\in W^{\mathbb{A},q}$, then $u$ must have $u_1$ as its trace.

This shows the remaining cases of the lower semicontinuity.

\textbf{b)} If $r\in \de G_q$ then, by the definition of the subdifferential, we have \begin{align}
\int_\Omega \scalar{w}{r}\dx\leq \left(\int_\Omega f_{q-1}(x,\mathbb{A} (u+w))^q-f_{q-1}(x,\mathbb{A} u)^q\dx\right)
\end{align}
for all $w\in W_0^{\mathbb{A},q}(\Omega)\cap L^2(\Omega,\R^m)$ (resp.\ $w\in W^{\mathbb{A},q}(\Omega)$). Since the subdifferential is only defined when $G_q(u)<\infty$ we only have to consider $\mathbb{A} u\in L^q(\Omega,\R^{m\times n})$. We have \begin{align}&\frac{1}{h}\left(\int_\Omega f_{q-1}(x,\mathbb{A} (u+hw))^q-f_{q-1}(x,\mathbb{A} u)^q\dx\right)\rightarrow \int_\Omega \mathrm{D}_yf_{q-1}^q(x,\mathbb{A} u):\mathbb{A} w\dx,\end{align} which follows by dominated convergence and because the difference quotients of convex functions are monotone.

 This gives us that \begin{align}
\int_\Omega \scalar{w}{r}\dx\leq \int_\Omega \mathrm{D}_yf_{q-1}^q(x,\mathbb{A} u):\mathbb{A} w\dx
\end{align}
for all $w\in W_0^{\mathbb{A},q}(\Omega)\cap L^2(\Omega,\R^m)$ (resp.\ $W^{\mathbb{A},q}(\Omega)$).

This inequality can be rewritten as \begin{align}\int_\Omega\scalar{w-u}{r}\dx\leq &\int f_{q-1}^q(x,\mathbb{A} u)-\mathrm{D}_yf_{q-1}^q(x,\mathbb{A} u):\mathbb{A} u\dx\notag\\
&+\int_\Omega \mathrm{D}_yf_{q-1}^q(x,\mathbb{A} u):\mathbb{A} w\dx\notag\\
&-\int_\Omega f_{q-1}^q(x,\mathbb{A} u)\dx,\end{align}
for all $w\in W_{u_1}^{\mathbb{A},q}(\Omega)\cap L^{2}(\Omega,\R^m)$ (resp.\ $W^{\mathbb{A},q}$). The first integrand equals \begin{align}f_{q-1}^q(x,\mathbb{A} u)-\mathrm{D}_yf_{q-1}^q(x,\mathbb{A} u):\mathbb{A} u=-\left(f_{q-1}^q\right)^*\left(x,\mathrm{D}_yf_{q-1}^q(x,\mathbb{A} u)\right)\end{align}
and we conclude.
\end{proof}

\subsection{Proof of b)$\implies$a)}
We now move on to the proof of the Theorems \ref{main thm 1} and \ref{main thm 2}.

We first show that b)$\implies$ a)  in the Theorems \ref{main thm 1} and \ref{main thm 2}. By the Definition \ref{Def F bd} of $F_{dual}^{u_1}$ we have for all $w\in \BVA\cap L^2$ that \begin{align}
-\int_\Omega f^*(x,z)\dx+\int_\Omega (z,\mathbb{A}w)-\int_{\de\Omega}\scalar{\scalar{z}{\nu}}{w-u_1}\dd\mathcal{H}^{n-1}\leq F_{dual}^{u_1}(w),
\end{align}
plugging this into the variational inequality \eqref{var ineq} in b) shows that \begin{align}
\int_\Omega \scalar{w-u}{r}\dx\leq F_{dual}^{u_1}(w)-F_{dual}^{u_1}(u),
\end{align}
the same argument also works for $F_{dual}$.\smallskip

\subsection{Construction of an approximating sequence  for a)$\implies$b)}
Next, we show a)$\implies$ b) in the Theorems \ref{main thm 1} and \ref{main thm 2}, which is the main step. We denote the operator of all  $(u,r)$ fulfilling the condition b) from the theorems by $\widetilde{\de F_{dual}^{u_1}}$ resp.\ $\widetilde{\de F_{dual}}$. By b)$\implies$a) we  have $\widetilde{\de F_{dual}^{u_1}}\subset \de F_{dual}^{u_1}$ resp.\ $\widetilde{\de F_{dual}}\subset \de F_{dual}$. 

To show the reverse inclusion, we show that these operators are already maximal in the sense that for every $v\in L^2(\Omega,\R^m)$, we can find $u,r\in L^2(\Omega,\R^m)$ such that $u+r=v$ and $r\in \widetilde{\de F_{dual}^{u_1}}$ (resp.\ $\widetilde{\de F_{dual}}$). This implies the reverse inclusion by Lemma \ref{monotonicity}.

We pick some $v\in L^2(\Omega,\R^m)$. By the Lemmata \ref{monotonicity} and \ref{subdiff reg} a), there are $u_q$ and $r_q\in \de G_q(u_q)$ such that $u_q+ r_q=v$ for each (sufficiently small) $q>1$. We would like to recover suitable $r,u$ in the limit $q\searrow 1$.
\begin{lemma}\phantomsection\label{bdness u}
The sequence $\norm{u_q}_{L^2(\Omega,\R^m)}+\norm{\mathbb{A}u_q}_{L^q(\Omega,\R^{m\times n})}$ is uniformly bounded for $q\searrow 1$.
\end{lemma}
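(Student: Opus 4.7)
The plan is to exploit the fact that $r_q\in\de G_q(u_q)$ together with $u_q+r_q=v$ means precisely that $u_q$ is the unique minimizer over $L^2(\Omega,\R^m)$ of the strictly convex functional
\begin{align}
u\mapsto \tfrac{1}{2}\norm{u-v}_{L^2}^2+G_q(u).
\end{align}
Hence for any admissible test function $u_*$ we have the comparison inequality
\begin{align}
\tfrac{1}{2}\norm{u_q-v}_{L^2}^2+G_q(u_q)\leq \tfrac{1}{2}\norm{u_*-v}_{L^2}^2+G_q(u_*).
\end{align}
The strategy is to pick $u_*$ independent of $q$ (so that the right-hand side is uniformly bounded as $q\searrow 1$) and then combine with the coercivity bound \eqref{bound g} on $G_q(u_q)$ to control both $\norm{u_q}_{L^2}$ and $\norm{\mathbb{A}u_q}_{L^q}$.

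In the case without boundary conditions the natural choice is $u_*=0$, which lies in $W^{\mathbb{A},q}(\Omega)\cap L^2$, and satisfies $G_q(0)\leq\int_\Omega f_{q-1}^q(x,0)\dx\leq (C_0)^q|\Omega|$ by the upper growth bound in Assumption~\ref{assf} (together with $f_{q-1}\leq f$), which stays bounded as $q\searrow 1$. In the case with Dirichlet data $u_1$, we use the Temporary assumption to fix an extension $\bar u_1\in W^{\mathbb{A},s}(\Omega)\cap L^2$ with $s>1$ and set $u_*=\bar u_1$. For $1<q\leq s$ we have $\bar u_1\in W_{u_1}^{\mathbb{A},q}\cap L^2$, and Hölder's inequality gives
\begin{align}
\norm{\mathbb{A}\bar u_1}_{L^q}\leq |\Omega|^{\frac{1}{q}-\frac{1}{s}}\norm{\mathbb{A}\bar u_1}_{L^s},
\end{align}
so the upper half of \eqref{bound g} yields $G_q(\bar u_1)\leq C(1+\norm{\mathbb{A}\bar u_1}_{L^q}^q)$ bounded uniformly in $q\in(1,s]$.

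Putting this together, the right-hand side of the comparison inequality is bounded by some constant $M$ independent of $q$. Since $G_q\geq 0$ (as $f_{q-1}\geq 0$), this immediately gives $\norm{u_q-v}_{L^2}\leq \sqrt{2M}$, hence $\norm{u_q}_{L^2}$ is uniformly bounded. For the second term, the lower bound in \eqref{bound g} yields
\begin{align}
\tfrac{1}{C}\norm{\mathbb{A}u_q}_{L^q}^q-C\leq G_q(u_q)\leq M,
\end{align}
so $\norm{\mathbb{A}u_q}_{L^q}^q\leq C(M+C)$, and raising to the power $1/q$ leaves a uniformly bounded quantity as $q\searrow 1$ (bounded by $\max(1,C(M+C))$).

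The only mildly delicate point is ensuring that the test function $u_*$ is admissible for all $q$ sufficiently close to $1$ — in the Dirichlet case this is exactly why we rely on the Temporary assumption that $u_1$ extends to a $W^{\mathbb{A},s}$ function with $s>1$; the general $L^1$-trace case is deferred to \ref{general u1}. Everything else is a direct combination of convex-analytic resolvent optimality with the coercivity estimate already established in the proof of Lemma~\ref{subdiff reg}~a).
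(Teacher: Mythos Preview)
Your proof is correct. Both you and the paper fix the same comparison function $u_*=\bar u_1$ (resp.\ $u_*=0$) and use the uniform bound on $G_q(u_*)$ from \eqref{bound g}, but the mechanisms differ slightly: you invoke the variational characterization of the resolvent, i.e.\ that $u_q$ minimizes $\tfrac12\norm{\cdot-v}_{L^2}^2+G_q$, and read off both bounds from the single comparison inequality. The paper instead introduces an auxiliary resolvent $\tilde u_q=(I+\de G_q)^{-1}(\bar u_1)$, bounds $\norm{\tilde u_q}_{L^2}$ via the subdifferential inequality, then uses the non-expansiveness of resolvents (Lemma~\ref{monotonicity}) to transfer this to $\norm{u_q}_{L^2}$, and finally tests the subdifferential inequality at $u_q$ with $\bar u_1$ to bound $G_q(u_q)$. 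Your route is more economical---it avoids the auxiliary $\tilde u_q$ and the non-expansiveness step entirely---while the paper's argument stays closer to the abstract monotone-operator framework it has set up.
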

\begin{proof}

 Let $\overline{u}_1\in W^{\mathbb{A},s}\cap L^2$ denote the extension of $u_1$ to $\Omega$ which exists by the temporary assumption \ref{temp ass}. For $q<s$, we have that $\norm{\mathbb{A}\overline{u}_1}_{L^q}$ (and by \eqref{bound g} also $G_q(\overline{u}_1)$) is bounded uniformly in $q$.
 
 By Lemma \ref{monotonicity}, there is some $\tilde{u}_q$ with $\overline{u}_1-\tilde{u}_q\in \de G_q(\tilde{u}_q)$. By using the definition of the subdifferential, we see that \begin{align}
\norm{\tilde{u}_q-\overline{u}_1}_{L^2}^2\leq G_q(\overline{u}_1)-G_q(\tilde{u}_q)\leq G_q(\overline{u}_1), 
\end{align} 
where we used the positivity of $f_\lambda$. Hence $\norm{\tilde{u}_q}_{L^2}$ is uniformly bounded. By the non-expansiveness statement in Lemma \ref{monotonicity}, we also see that $u_q$ is uniformly bounded in $L^2$. 

By testing the definition of the subdifferential at $u_q$ with $\overline{u}_1$ we see that \begin{align}
\scalar{\overline{u}_1-u_q}{v-u_q}\leq G_q(\overline{u}_1)-G_q(u_q),
\end{align}
hence $G_q(u_q)$ and by \eqref{bound g} also $\norm{\mathbb{A}u_q}_{L^q}$ are uniformly bounded.

In the case without boundary conditions, one can make the same argument with $0$ replacing $\overline{u}_1$.\end{proof}

 Hence, at least along a (non-relabeled) subsequence, these $u_q$'s converge weakly to some $u\in L^2$. Similarly, the $r_q=v-u_q$ are bounded as well and weakly converge along a subsequence to some $r$.

 We would like to take the limit $q\searrow 1$ in the inequality \eqref{reg inequality}, to obtain the desired variational inequality for $(u,r)$ in the limit. For this, we first need to show suitable convergence statements for all the terms.

We set \begin{align}
z_q:=\text{D}_yf_{q-1}^q(x,\mathbb{A}u_q).
\end{align}
Because $f_\lambda(x,y)=f_\lambda(x,Ay)$, we have $Az_q=z_q$. We have \begin{align}
&\int_\Omega |z_q|^2\dx=\int_\Omega\left|qf_{q-1}^{q-1}(x,\mathbb{A} u_q)\text{D}_yf_{q-1}(x,\mathbb{A} u_q)\right|^2\dx\\
\lesssim& \int_\Omega\left(1+|\mathbb{A}u_q|\right)^{2q-2}\dx\\
\lesssim& 1+\norm{\mathbb{A}u_q}_{L^q}^q
\end{align}
by the Hölder inequality, the growth bound on $f$, the fact that $f_\lambda\leq f$ and Lemma \ref{main lemma} a). Hence, along a subsequence, we have $z_q\xrightharpoonup{L^2} z$ for some $z\in L^2$ with $Az=z$.

 Next, we set \begin{align}
z_q':=-\left(f_{q-1}^q\right)^*\left(x,\mathrm{D}_yf_{q-1}^q(x,\mathbb{A} u_q)\right)=f_{q-1}(x,\mathbb{A} u_q)^q-\text{D}_yf_{q-1}^q(x,\mathbb{A} u_q):\mathbb{A} u_q
\end{align}
By using the same bounds for the derivatives as above, we have \begin{align}
&\int_\Omega \left|z_q'\right|\dx
=\int_\Omega \Bigg|f_{q-1}^q(x,\mathbb{A} u_q)-\text{D}_yf_{q-1}^q(x,\mathbb{A} u_q):\mathbb{A} u_q\bigg|\dx\notag\\
&\lesssim \int_\Omega \left(1+|\mathbb{A}u_q|\right)^{q}+|\mathbb{A}u_q|\left(1+|\mathbb{A}u_q|\right)^{q-1}\dx,\phantomsection\label{est z pprime1}\\
&\lesssim 1+\norm{\mathbb{A}u_q}_{L^q}^q
\end{align}
where we made use of the growth bound of $f$ in \ref{assf}.
Hence we conclude that $z_q'\stackrel{\ast}{\rightharpoonup} z'$ in $\mathcal{M}(\Omega)$ along a subsequence.\medskip

Next, we need to show that this $z$ is in $\mathcal{Z}$.

\begin{lemma}\label{est z pprime}
We have \begin{align}
\limsup_{q\searrow 1}\int_\Omega z_q'\dx\leq\int_{\Omega}z'\dx\leq \int_\Omega-f^*(x,z)\dx.
\end{align}
\end{lemma}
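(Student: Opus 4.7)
\emph{First inequality.} The crucial point is that $z_q'$ admits a uniform upper bound. Indeed, Fenchel--Young at $y=0$ together with Lemma~\ref{main lemma} and the upper growth bound on $f$ yield
\begin{align*}
(f_{q-1}^q)^*(x, z_q(x)) \geq -f_{q-1}^q(x, 0) \geq -f(x, 0)^q \geq -C_0^q,
\end{align*}
so $z_q' \leq M := C_0^2$ for all $q$ sufficiently close to $1$. Thus $(M-z_q')\dx$ is a sequence of nonnegative Radon measures converging weakly$^*$ in $\mathcal{M}(\Omega)$ to the nonnegative measure $M\dx - z'$, and lower semicontinuity of total mass under weak$^*$ convergence of nonnegative measures gives $\liminf_q \int_\Omega(M - z_q')\dx \geq (M\dx - z')(\Omega)$, which rearranges to the first inequality. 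This upper bound is essential: without it, mass could escape to $\de\Omega$ under weak$^*$ convergence in $C_0(\Omega)^*$, and the inequality would in general fail.

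\emph{Second inequality.} For any $\phi \in C_c^\infty(\Omega)$ with $\phi \geq 0$ and any $y \in L^\infty(\Omega, \R^{m \times n})$, Fenchel--Young gives pointwise
\begin{align*}
\phi z_q' = -\phi (f_{q-1}^q)^*(x, z_q) \leq \phi\bigl[f_{q-1}^q(x, y) - z_q \cdot y\bigr].
\end{align*}
Integrating and letting $q \searrow 1$, the left-hand side converges to $\int \phi\, dz'$ by weak$^*$ convergence; the term $\int \phi z_q \cdot y \dx$ converges to $\int \phi z \cdot y \dx$ since $z_q \rightharpoonup z$ in $L^2$ and $\phi y \in L^2$; and $\int \phi f_{q-1}^q(x, y) \dx \to \int \phi f(x, y) \dx$ by dominated convergence, using $f_{q-1}(x, y) \to f(x, y)$ pointwise (standard Moreau--Yosida) together with the uniform bound $f_{q-1}^q(x, y) \leq (C_0(1 + \|Ay\|_\infty))^q$. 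Hence
\begin{align*}
\int_\Omega \phi\, dz' \leq \int_\Omega \phi\bigl[f(x, y) - z \cdot y\bigr]\dx \quad \text{for all } y \in L^\infty.
\end{align*}
By the classical duality for normal convex integrands -- obtained by truncating a measurable selector of the pointwise argmin in $f(x, y) - z(x)\cdot y$ and applying monotone convergence -- the infimum over $y \in L^\infty$ of the right-hand side equals $-\int \phi f^*(x, z)\dx$.

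\emph{Conclusion.} Choosing $\phi_l \in C_c^\infty(\Omega)$ with $0 \leq \phi_l \nearrow 1_\Omega$, the left-hand side tends to $z'(\Omega) = \int_\Omega z' \dx$ by dominated convergence against the finite measure $|z'|$ (recall $z_q'$ is bounded in $L^1$), while the right-hand side tends to $-\int_\Omega f^*(x, z)\dx$ by monotone convergence applied to the nonnegative quantity $f^*(x, z) + C_0 \geq 0$ (which holds since $f^*(x, z) \geq -f(x, 0) \geq -C_0$). The principal subtlety is the exchange of infimum and integral for the normal convex integrand $f$, which relies on a measurable selection argument; the rest is Fenchel--Young combined with the boundary-mass control afforded by the uniform upper bound on $z_q'$.
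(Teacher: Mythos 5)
Your proposal is correct. The first inequality is handled exactly as in the paper: both arguments rest on the uniform upper bound $z_q'\leq C_0^q$ coming from $(f_{q-1}^q)^*(x,z_q)\geq -f_{q-1}^q(x,0)\geq -C_0^q$, followed by lower semicontinuity of mass for the nonnegative measures $(M-z_q')\dx$; the paper leaves this last step implicit, you spell it out, and you are right that the bound is what prevents mass from escaping to $\de\Omega$. For the second inequality your route genuinely differs from the paper's. The paper tests Fenchel--Young only against \emph{constant rational} matrices $a$, passes to the weak\textsuperscript{*} limit to get that $a:z\,\mathcal{L}^n+z'-f(\cdot,a)\mathcal{L}^n$ is a nonpositive measure for each such $a$, and then uses countability of $\Q^{m\times n}$ plus the Lebesgue decomposition to conclude $(z')^a\leq -f^*(\cdot,z)$ a.e.\ and $(z')^s\leq 0$; this is elementary and needs no selection theorem. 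You instead test against arbitrary $y\in L^\infty$ and then invoke the interchange of infimum and integral for normal convex integrands over the decomposable space $L^\infty$. That works here because $f$ is jointly Borel and continuous in $y$, hence a normal integrand, but it is the heavier tool, and your description of it via ``a measurable selector of the pointwise argmin'' is slightly imprecise: the argmin can be empty (e.g.\ when the infimum is finite but attained only at infinity, or when $f^*(x,z(x))=+\infty$), so one must select $\eps$-minimizers (or truncated near-minimizers) rather than exact ones. With that standard fix, or by citing the interchange theorem directly, your argument is complete; what the paper's version buys is the avoidance of measurable selection altogether and an explicit sign statement for the singular part of $z'$, while yours avoids the density-of-rationals bookkeeping.
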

\begin{proof}
The first inequality follows from Fatou and the fact that $f_{q-1}(0)^q\leq f(0)^q\leq   C_0^q$ (by \ref{assf}) and that we hence have $z_q'\leq C_0^q$, which is uniformly bounded from above.

For the second inequality, we first note that $f^*(x,z)\geq -f(x,0)\geq -C_0$ by \ref{assf} and that the integral on the right is hence well-defined.

For all  $a\in \Q^{m\times n}$ we have by Fenchel duality \begin{align}
f_{q-1}(x,a)^q\geq a:z_q+z_q'.
\end{align}
 Now $f_{q-1}(x,a)^q$ converges uniformly to $f(x,a)$ by the growth bound \ref{assf} on $f$ and Lemma \ref{main lemma} b). Because weak\star\ convergence preserves the $\geq$ relation, this shows that \begin{align}
a:z+z'-f(x,a)\leq 0
\end{align}
in the sense of measures and hence the density of the absolutely continuous part of this w.r.t.\ the Lebesgue measure is $\leq 0$ a.e.\ and the singular part is purely negative. But since $ \Q^{m\times n}$ is a dense subset of $ \R^{m\times n}$ this is only possible if $(z')^a\leq-f^*(\cdot,z)$ a.e.
\end{proof}

Since $f^*(x,z)\geq -C_0$ pointwise and we conclude from the Lemma that \begin{align}
f^*(x,z)\in L^1(\Omega,\R^{m\times n}).
\end{align}
This also shows that $z\in L^\infty(\Omega,\R^{m\times n})$, since $f^*(x,z(x))<\infty$ implies $|z(x)|\leq C_0$ as argued in \eqref{bound z}.

\subsection{Passage to the limit in $a)\implies b)$}

We are now ready to take the limit $q\searrow 1$ in \ref{reg inequality}. By lower semicontinuity, for all $w\in W_{u_1}^{\mathbb{A},s}$ (resp. all $w\in W^{\mathbb{A},s}$) it holds that \begin{align}
\int_\Omega \scalar{w-u}{r}\dx\leq \liminf_{q\searrow 1} \int_\Omega \scalar{w-u_q}{v-u_q}\dx.
\end{align}
Now for small enough $q$, we can use the inequality \ref{reg inequality} and obtain \begin{align}
\int_\Omega \scalar{w-u}{r}\leq \liminf_{q\searrow 1} &\int_\Omega -\left(f_{q-1}^q\right)^*\left(x,\mathrm{D}_yf_{q-1}^q(x,\mathbb{A} u_q)\right)\dx\nonumber+\int_\Omega \mathrm{D}_yf_{q-1}^q(x,\mathbb{A} u_q):\mathbb{A}w\dx\nonumber\\
&-\int_\Omega f_{q-1}(x,\mathbb{A} u_q)^q\dx.
\end{align}
Inserting the definitions of $z_q$ and $z_q'$ and making use of their convergence and Lemma \ref{est z pprime}, we can further estimate this as \begin{align}
\leq \liminf_{q\searrow 1}\int_\Omega -f^*(x,z)+z:\mathbb{A}w\dx-\int_\Omega f_{q-1}^q(x,\mathbb{A} u_q)\dx.\label{middle step limit}
\end{align}
Now we can make use of Young's inequality and the estimate in Lemma \ref{main lemma} b) to obtain that \begin{align}
\mkern17mu\int_{\Omega}f_{q-1}(x,\mathbb{A}u_q)^q\geq \int_\Omega q f_{q-1}(x,\mathbb{A}u_q)\dx-\left(q-1\right)\mathcal{L}^n(\Omega)\label{ineq f}\geq\int_\Omega f(x,\mathbb{A}u_q)\dx-Cq(q-1).
\end{align}
The second summand on the right-hand side goes to zero and hence we obtain from \eqref{middle step limit} and \eqref{ineq f} that \begin{align}
\int_\Omega \scalar{w-u}{r}\leq \int_\Omega -f^*(x,z)+z:\mathbb{A}w\dx-\limsup_{q\searrow 1}\int_\Omega f(x,\mathbb{A} u_q)\dx.\label{ineq w reg}
\end{align}
We have by the definition of the relaxation that \begin{align}\limsup_{q\searrow 1}\int_\Omega f(x,\mathbb{A} u_q)\dx\geq F^{u_1}_{relaxed}(u)\geq F_{dual}^{u_1}(u),\end{align}
where the second inequality follows from Lemma \ref{basic properties F} c).

This shows that the variational inequality \eqref{var ineq}, (resp.\ \eqref{b 1}) hold for all $w\in W_{u_1}^{\mathbb{A},q}\cap L^2$ (resp.\ $W^{\mathbb{A},q}$) for small enough $q>1$. To proceed further, we distinguish the Dirichlet case and the case without boundary conditions. In the first case, we test this inequality with $w=w_1+tw_0$ for some $w_1\in W_{u_1}^{\mathbb{A},q}$ and $w_0\in W_{0}^{\mathbb{A},q}$ and let $t\rightarrow \pm \infty$, which implies \begin{align}
\int_\Omega\scalar{w_0}{r}=\int_\Omega z:\mathbb{A}w_0\dx.\label{limit eq}
\end{align}
This implies that $r=-\div z$ by taking $w_0\in C_c^\infty$. We use the Green's formula \eqref{Greens form}, to obtain that for $w'\in \BVA\cap L^2$ it holds that \begin{align}
\int_\Omega \scalar{w'}{r}\dx=\int_\Omega (z,\mathbb{A}w')-\int_{\de\Omega}\scalar{w'}{\scalar{z}{\nu}}\dd\mathcal{H}^{n-1}.\label{eq tilde w2}
\end{align}
Adding this to \eqref{ineq w reg}, show that a)$\implies$b) in Theorem \ref{main thm 2}.

In the case without Dirichlet boundary conditions, we can again test \eqref{ineq w reg} with $tw$ for any $w\in W^{\mathbb{A},q}\cap L^2$ to conclude that $\int_\Omega\scalar{w}{r}=\int_\Omega z:\mathbb{A}w\dx$ and that $r=-\div z$
and that by \eqref{green regular} we must have $\scalar{z}{\nu}=0.$

 Now we can use the Green's formula in Proposition \ref{int by parts} together with this and obtain that for all $w'\in \BVA\cap L^2$ we have \begin{align}
\int_{\Omega} \scalar{w'}{r}\dx=\int_\Omega (z,\mathbb{A}w').\label{eq tilde w}
\end{align}
Taking $w'=\tilde{w}-w$ for $w\in W^{1,2}$ and $\tilde{w}\in \BVA\cap L^2$, we obtain that \eqref{b 1} holds for all $\tilde{w}\in\BVA\cap L^2$ by adding \eqref{eq tilde w} to \eqref{ineq w reg}.

This finishes the proof that a)$\implies$b) for Theorem \ref{main thm 1}

\subsection{Proof of the remaining implications}

d)$\iff$c)$\iff$b): Clearly, c) implies b). We have already shown that b) implies $r=-\div z$ and in the case with boundary terms also $\scalar{z}{\nu}=0$.

We test b) with $w=u$ and  obtain \begin{align}
F_{dual}(u)\leq \int_\Omega-f^*(x,z)\dx+\int_\Omega(z,\mathbb{A}u)\end{align}resp.
\begin{align}F_{dual}^{u_1}(u)\leq \int_\Omega-f^*(x,z)\dx+\int_\Omega(z,\mathbb{A}u)-\int_{\de\Omega}\scalar{\scalar{z}{\nu}}{u-u_1}\dd\mathcal{H}^{n-1}.
\end{align}
By the definition of $F_{dual}/F_{dual}^{u_1}$ (and \eqref{pint def} for the case with boundary terms), this implies equality, which is exactly the remaining statement of d). By subtracting this equality from \eqref{eq tilde w} resp.\ \eqref{eq tilde w2}, we obtain the desired equality for c).

\subsection{General boundary data $u_1$}\label{general u1}

For Theorem \ref{main thm 2}, it remains to show a)$\implies$ b) for general $u_1\in L^1(\de \Omega,\R^m)$ which do not fulfil the additional assumption \ref{temp ass}.
We certainly still have b)$\implies$a) here, so we again only need to show that the range condition is fulfilled. For this we take $u_{1,l}\in H^\frac{1}{2}(\de\Omega,\R^m)$ converging to $u_1$ in $L^1$. Since these can be extended to functions in $H^1(\Omega,\R^m)$, the assumption \ref{temp ass} and hence Theorem \ref{main thm 2} certainly hold for these.

We again take some $v$ and $u_l$ such that $v-u_l\in\de F_{dual}^{u_{1,l}}(u_l)$, which exist by Lemma \ref{monotonicity}. By making the same argument as for Lemma \ref{bdness u}, we see again that $\norm{u_l}_{L^2}$ is uniformly bounded.

We denote the corresponding $z$, which the theorem yields, by $z_l$.

Now by testing the condition b) (see \eqref{var ineq}) with $w=0$, one obtains that \begin{align}
\sup_{l}\int_\Omega f^*(x,z_l)\leq \sup_l\int_{\Omega}\scalar{u_l}{v-u_l}\dx-\int_{\de\Omega}\scalar{\scalar{z_l}{\nu}}{-u_{1,l}}\dd\mathcal{H}^{n-1}-F_{dual}^{u_{1,l}}(u_l).\label{bound transform}
\end{align}
By \eqref{bound z}, we see that the second summand is uniformly bounded, the first one is trivially bounded. By e.g.\ taking $z=0$ in the definition of $F_{dual}^{u_1}$ and using  $f^*(x,0)=\sup_y -f(x,y)$ and the growth bound in assumption \eqref{assf}, we see that \begin{align}
F_{dual}^{u_{1,l}}(u_l)\geq -\int_\Omega f^*(x,0)\dx= \int_\Omega \inf_y f(x,y)\dx\geq 0. 
\end{align}
Since by the growth bound \ref{assf} of $f$ we also have $f^*(x,z(x))\geq -C_0$, we hence see that $\norm{f^*(x,z_l)}_{L^1}$ is uniformly bounded.

As $|z_l|\leq C_0$ a.e.\ by \ref{bound z}  there is a weak\star limit $z$ with $Az=z$. By the same argument as in the proof of Lemma \ref{est z pprime}, we must have  \begin{align}
\limsup_l\int_\Omega -f^*(x,z_l)\dx\leq \int_\Omega -f^*(x,z)\dx.
\end{align}
Now we can take the limit in the variational inequality \ref{var ineq} for $u_{1,l}$ and obtain that \begin{align}
\int_{\Omega}\scalar{w-u}{v-u}\dx\leq& \int_\Omega-f^*(x,z)\dx+\int_\Omega (z,\mathbb{A}w)-\int_{\de\Omega} \scalar{\scalar{z}{\nu}}{w-u_1}\dd\mathcal{H}^{n-1}\\
&-\liminf_{l\rightarrow \infty} F_{dual}^{u_{1,l}}(u_l).\nonumber
\end{align}
We can make use of Lemma \ref{basic properties F} e) to estimate this as \begin{align}
\int_{\Omega}\scalar{w-u}{v-u}\dx\leq&\int_\Omega-f^*(x,z)\dx+\int_\Omega (z,\mathbb{A}w)-\int_{\de\Omega} \scalar{\scalar{z}{\nu}}{w-u_1}\dd\mathcal{H}^{n-1}\\
&-\liminf_{l\rightarrow \infty} F_{dual}^{u_{1}}(u_l)+C\norm{u_{1,l}-u_1}_{L^1}.\nonumber
\end{align}
By the lower semicontinuity of $F_{dual}^{u_1}$, we conclude that the inequality \ref{var ineq} in b) must hold. From here on, one can proceed exactly as above. This finishes the proof of Thm.\ \ref{main thm 2}.

\subsection{Proof of Thm.\ \ref{T relax 1} and \ref{T relax 2}}

We only show Thm.\ \ref{T relax 2}, the other one works with the same argument and is in fact even slightly easier. We shall also first assume \ref{temp ass} again.
 We reuse the sequences $u_l$ from the proof that a)$\implies$b), in the proof there we have actually shown that for all $w\in \BV^\mathbb{A}\cap L^2$ it holds that \begin{equation}\begin{aligned}
\int_\Omega\scalar{w-u}{r}\leq& \int_\Omega -f^*(x,z)\dx+\int_{\Omega}(z,\mathbb{A}w)\label{relax ineq}-\int_{\de\Omega}\scalar{\scalar{z}{\nu}}{w-u_1}\dd\mathcal{H}^{n-1}\\
&-\limsup_{q\searrow 1} \int_\Omega f(x,\mathbb{A}u_q)\dx,
\end{aligned}\end{equation}
 indeed one obtains this from adding \eqref{ineq w reg} to \eqref{eq tilde w2}. 

We use \eqref{relax ineq} and test it with $w=u$ to obtain that \begin{align}
\limsup_{q\searrow 1}\int_{\Omega}f(x,\mathbb{A}u_q)\dx&\leq \int_{\Omega} -f^*(x,z)\dx+\int _\Omega (z,\mathbb{A}u)-\int_{\de\Omega}\scalar{\scalar{z}{\nu}}{u-u_1}\dd\mathcal{H}^{n-1}\\
&\leq F_{dual}^{u_1}(u).
\end{align}
By Lemma \ref{basic properties F} c),  it follows that \begin{align}
\limsup_{q\searrow 1}\int_{\Omega}f(x,\mathbb{A}u_q)\dx=F^{u_1}_{relaxed}(u)=F_{dual}^{u_1}(u).
\end{align}
This shows the relaxation statement for all $u$ for which there is a $v$ of the form $v=u+r$ with $r\in \de F_{dual}^{u_1}(u)$.  By Lemma \ref{monotonicity}, these are all $u\in \dom(\de F_{dual}^{u_1})$. Now, by using Lemma \ref{subdiff approx} and picking a diagonal sequence, we conclude. 

A general $u_1\in L^1(\de \Omega,\R^m)$ can be extended to a function $\overline{u}_1\in W^{1,1}\cap L^2$ (cf.\ \cite{MR3525400}). We can find $u_{1,l}\in W^{1,2}$ converging to $\overline{u}_1$ in $W^{1,1}\cap L^2$. Then \begin{align}
\norm{u_{1,l}-u_1}_{L^1(\de \Omega,\R^m)}\rightarrow 0.
\end{align}
As we have already proven the theorem for such regular boundary values, we can find $u_{k,l}\in W_{u_{1,l}}^{\mathbb{A},1}\cap L^2$ such that \begin{align}
\lim_{k\rightarrow \infty} \int _\Omega f(x,\mathbb{A}u_{k,l})\dx\rightarrow F_{dual}^{u_{1,l}}(u).\label{w12 approx}
\end{align}
We want to use a diagonal sequence among $u_{k,l}+\overline{u}_1-u_{1,l}$ (which have trace $u_1$) as an optimal sequence. By the Lipschitzness of $f$ we can estimate \begin{align}
&\int_\Omega \big|f(x,\mathbb{A}u_{k,l})-f(x,\mathbb{A}(u_{k,l}+\overline{u}_1-u_{1,l}))\big|\dx\lesssim \norm{\mathbb{A}(\overline{u}_1-u_{1,l})}_{L^1}\rightarrow 0.\label{sim approx}
\end{align}
Together with Lemma \ref{basic properties F} e), we conclude from \eqref{sim approx} and \eqref{w12 approx} by taking a diagonal sequence.\newline\vphantom{a} \hfill\qedsymbol

\subsection{Proof of Proposition \ref{f cont 1} and \ref{f cont 2}}
We shall require the following Lemma:

\begin{lemma}
Assume \ref{assf2} holds. Then for $z\in \mathcal{Z}$ and $u\in \BVA(\Omega)\cap L^2(\Omega,\R^m)$ we have  \begin{align}
\theta(z,\mathbb{A}u)\leq f^\infty(x,\frac{(\mathbb{A}u)^s}{|\mathbb{A}u|^s}(x)) \quad \text{ for $|\mathbb{A}u|^s$-a.e.\ $x\in \Omega$}\label{theta ineq}
\end{align} 
and
\begin{align}
\scalar{w(x)}{\scalar{z}{\nu}}\leq f^\infty(x,w(x)\otimes \nu)\quad \text{ for $\mathcal{H}^{n-1}$-a.e.\ $x\in \de\Omega$},\label{ntrace ineq}
\end{align}
for all $w(x)\in L^1(\de\Omega,\R^m)$.
\end{lemma}
\begin{proof}
We first show \eqref{theta ineq}, we extend $z$ by $0$ to $\R^n$ and convolute it with compactly supported smooth standard mollifiers $\eta_l$ and set $z_l=z*\eta_l$. The Lemma \ref{theta app} is applicable to this approximating sequence, it hence suffices to show that $\limsup_{l\rightarrow \infty} \theta(z_l,\mathbb{A}u)(x)\leq f^\infty(x,\frac{(\mathbb{A}u)^s}{|\mathbb{A}u|^s}(x))$ for $|\mathbb{A}u|^s$-a.e.\ $x\in \Omega$, since weak\star-convergence preserves the $\leq$ relation.

We also have $\theta(z_l,\mathbb{A}u)(x)=z_l:\frac{(\mathbb{A}u)^s}{|\mathbb{A}u|^s}$ as the pairing is just the pointwise product for regular $z_l$ by Example \ref{pairingtrivial}.

Since $f^*(x,z(x))<\infty$ almost everywhere, we also have, using Fenchel duality, whenever $\diam \supp\eta_l<\dist(x,\de\Omega)$ that \begin{align}
z_l(x):\frac{(\mathbb{A}u)^s}{|\mathbb{A}u|^s}(x)=\int_{\R^n}\eta_l(x-y) z(y):\frac{(\mathbb{A}u)^s}{|\mathbb{A}u|^s}(x)\,\dy\leq \int_{\R^n} \eta_l(x-y) f^\infty(y,\frac{(\mathbb{A}u)^s}{|\mathbb{A}u|^s}(x))\,\dy.
\end{align}
By the assumed continuity of $f^\infty$, the right-hand side converges to $f^\infty(x,\frac{(\mathbb{A}u)^s}{|\mathbb{A}u|^s}(x))$ when $l\rightarrow +\infty$, which shows \eqref{theta ineq}. 

We move on to \eqref{ntrace ineq}, which we first show for the case in which $w=a$ is a constant. 
This special case can be concluded from the fact that $\scalar{a}{\scalar{z}{\nu}}$ is the normal trace of the vector field $az=\sum_{i=1}^m a_iz_{i\bullet}$. Indeed this vector field has the property that for all $b\in \R^n$ it holds that $\scalar{az(y)}{b}\leq f^\infty(y,a\otimes b)$ pointwise a.e.\ in $\Omega$ by \eqref{Fenchel} and because $\scalar{az}{b}=z:a\otimes b$. Together with suitable limit formulas (see e.g.\ the unpublished preprint \cite[Prop 2.2]{anzellotti1983traces} (or alternatively the published paper \cite{comi2024representation})) for the normal trace and the assumed continuity of $f^\infty$, we hence see that it holds that \begin{align}
\scalar{a}{\scalar{z}{\nu}}=\scalar{az}{\nu}\leq f^\infty(x, a\otimes \nu).
\end{align}
Now to recover general $w$, we note that \eqref{ntrace ineq} must also hold for simple functions because it holds for constants. Since \eqref{ntrace ineq} is stable under pointwise convergence and every $w\in L^1$ can be obtained as a pointwise a.e.\ limit of simple functions, we conclude.
\end{proof}

\begin{proof}[Proof of Proposition \ref{f cont 1} and \ref{f cont 2}]
We only prove Proposition \ref{f cont 2}, the other case uses the same argument but is slightly easier.

\textbf{a)$\implies$b)}
By Fenchel duality, we have  \begin{align}
z(x):(\mathbb{A}u)^a(x)-f^*(x,z(x))\leq f(x,(\mathbb{A}u)^a(x))
\end{align}
where equality holds precisely if $z(x)\in \de_yf(x,(\mathbb{A}u)^a(x))$.
Combining with Proposition \ref{density} and \eqref{theta ineq}, we see that \begin{align}
\int_\Omega (z,\mathbb{A}u)-\int_\Omega f^*(x,z(x))\leq \int_\Omega f(x,(\mathbb{A}u)^a)\dx+\int_\Omega f^\infty(x,\frac{(\mathbb{A}u)^s}{|\mathbb{A}u|^s}(x))\dd |\mathbb{A}u|^s(x),
\end{align}
and equality implies that $z(x)\in \de_yf(x,(\mathbb{A}u)^a(x))$ and $\theta(z,\mathbb{A}u)=f^\infty(x,\frac{(\mathbb{A}u)^s}{|\mathbb{A}u|^s}(x))$ hold a.e.

Combining this with \eqref{ntrace ineq}, Proposition \ref{int rep cont} and the equivalent condition d) in Theorem \ref{main thm 2}, we conclude that this equality must indeed hold and that we must furthermore have \begin{align}
\int_{\de\Omega} \scalar{\scalar{z}{\nu}}{u_1-u}\dd\mathcal{H}^{n-1}=\int_{\de\Omega} f^\infty(x,(u_1-u)(x)\otimes \nu)\dd\mathcal{H}^{n-1},
\end{align}
which, after using \eqref{ntrace ineq} and \eqref{Fenchel} again implies the statement. 

\textbf{b)$\implies$a)}
It suffices to show that b) implies the condition d) from Theorem \ref{main thm 1} resp.\ \ref{main thm 2}. For this, we can first use that by Fenchel duality we have \begin{align}
z(x):(\mathbb{A}u)^a(x)-f^*(x,z(x))=f(x,(\mathbb{A}u)^a(x))
\end{align}
and hence, using Proposition \ref{density} we see that \begin{align}
\int_\Omega (z,\mathbb{A}u)-\int_\Omega f^*(x,z)\dx=\int_\Omega f(x,(\mathbb{A}u)^a)\dx+\int_\Omega f^\infty(x,\frac{(\mathbb{A}u)^s}{|\mathbb{A}u|^s}(x))\dd |\mathbb{A}u|^s(x).
\end{align}
Similarly, we see from \eqref{Fenchel} that $\scalar{\scalar{z}{\nu}}{u-u_1}=f^\infty(x,(u-u_1)\otimes \nu)$ holds on $\de\Omega$.

Combining with Proposition \ref{int rep cont}, we conclude.

\end{proof}

\textbf{Statement on conflicting interests:} The author declares that he has no conflicting interests.

\textbf{Acknowledgement:} The author wishes to thank the anonymous referee for his/her helpful remarks.

\bibliography{Flowsbib}
\bibliographystyle{abbrv} 
\end{document}